\newtheorem{theorem}{Theorem}[section]
\newtheorem{lemma}[theorem]{Lemma}
\theoremstyle{definition}
\newtheorem{algorithm}[theorem]{Algorithm}
\newtheorem{corollary}[theorem]{Corollary}
\newtheorem{definition}[theorem]{Definition}
\newtheorem{problem}[theorem]{Problem}
\newtheorem{remark}[theorem]{Remark}
\newtheorem{proposition}[theorem]{Proposition}
\def\CC{{\cal C}}
\def\CP{{\cal P}}
\def\CE{{\cal E}}
\def\CL{{\cal L}}
\def\CT{{\cal T}}
\def\CF{{\cal F}}
\def\Lp{{\overline{p}}}
\def\Lq{{\overline{q}}}
\def\Le{{\overline{\varepsilon}}}
\def\Lr{{\overline{r}}}
\def\Ld{{\overline{\delta}}}
\def\MZ{{\mathbb{Z}}}
\def\MN{{\mathbb{N}}}
\DeclareMathOperator{\tower}{tower}
\DeclareMathOperator{\Reduce}{Reduce}
\newcommand{\gp}[1]{{\langle #1 \rangle}}
\newcommand{\rb}[1]{{\left( #1 \right)}}
\title{Power Circuits, Exponential Algebra, and Time Complexity}
\date{\today}
\author{Alexei G. Myasnikov, Alexander Ushakov, and Dong Wook Won}
\begin{document}

\maketitle

\begin{abstract}

Motivated by algorithmic problems from combinatorial group theory
we study computational properties of integers equipped with binary
operations $+$, $-$, $z = x 2^y, z = x 2^{-y}$ (the former two are partial) and predicates
$<$ and $=$. Notice that in this case very  large numbers, which are obtained as  $n$ towers  of exponentiation in the base $2$
 can be realized as $n$ applications of the operation $x2^y$, so working with such  numbers given in the usual  binary
expansions requires super exponential space. We
define a new compressed representation for integers by power circuits (a particular type of straight-line programs) which is unique and easily computable,   and show that the operations above  can be performed in polynomial time if the numbers are presented by power circuits. We mention several applications of this technique to algorithmic problems, in particular, we prove that the quantifier-free theories  of various exponential algebras are decidable in polynomial time, as well as the word problems in some ``hard to crack'' one-relator groups.
\end{abstract}

\tableofcontents

\section{Introduction}

In this paper we study power circuits (arithmetic circuits with exponentiation), and show that a number of algorithmic problems in algebra, involving exponentiation, is solvable in polynomial time.

\subsection{Motivation}

Massive numerical computations play a very important part in modern science. In one way or another they are usually reduced to computing with integers. This unifies various computational techniques  over algebraic structures within the theory of constructive \cite{Malcev} or recursive models  \cite{Rabin}. From a more practical view-point these reductions allow one to utilize the fundamental mathematical fact that  the standard arithmetic manipulations with integers can be performed fairly quickly. In computations,  integers are usually presented in the binary form, i.e., by words in the alphabet $\{0,1\}$. Given two integers $a$ and $b$ in the binary form one can perform the basic arithmetic operations in time $O(N \log N \log\log N)$, where $N$ is the maximal binary length of $a$ and $b$ (see, for example, \cite{Bini_Pan:book:1990}).   In the modern mathematical jargon one can say that the structure $\mathbb{Z} = \langle \mathbb{Z}, +,-,\cdot, \, \leq \rangle$ (the  standard arithmetic) is computable in at most quadratic time with respect to the binary representation of integers, or it is polynomial time  computable (if we do not want to specify the degree of polynomials). This result holds for arbitrary $n$-ary representations of integers.

Notice, that the reductions mentioned above are not necessary computable
in polynomial time. In fact, there are recursive structures where
polynomial time computations  are impossible. Furthermore, there are
many natural algebraic structures that admit efficient computations,
though efficient algorithms are not easy to  come by  (we discuss some
examples below).  Usually, the core of the issue is to find a  specific
representation (data structure) of  given  algebraic objects which is
suitable for fast computations.

 For example, the standard representation of integer  polynomials from $\mathbb{Z}[X]$  as formal linear combinations of monomials in variables from $X$, may not be the most efficient way to compute with.   Sometimes, it is more computationally advantageous to represent polynomials by  {\em arithmetic circuits}. These circuits are finite directed labeled acyclic graphs $C$ of a special type.  Every node with non-zero in-degree in such a circuit $C$ is labeled either by  $+$ (addition) or by $-$ (subtraction), or by $\cdot$ (multiplication); nodes of zero in-degree (source nodes) are labeled either by constants from $\mathbb{Z}$ or by indeterminates $x_i \in X$. Going from the source nodes to a distinguished  sink node (of zero out-degree) one can write down a polynomial $p_C$ represented by the circuit $C$.  Observe, that given two circuits $C$ and $D$ it is easy to  construct a new circuit that represents the polynomial $p_C + p_D$ (or $p_C\cdot p_D$), so algebraic operations over circuits representations  are almost trivial. In \cite{Strassen:1969} Strassen used arithmetic circuits to design an efficient algorithm that performs matrix multiplication faster than $O(n^3)$. There are polynomial size circuits to compute determinants and permanents. We refer to a survey \cite{Strassen:1990} and a book \cite{Burgisser:1997} for results on algebraic circuits and complexity.

The idea to use circuits (graphs) to represent terms of some fixed functional language,
or the functions they represent, is rather general.  For example, {\em boolean circuits} are used to deal with  boolean formulas or functions. Here boolean formulas can be  viewed as terms in the language $\{\wedge, \vee, \neg, 0,1\}$ of boolean algebras. Construction of boolean circuits is similar to the arithmetic ones, where the arithmetic operations are replaced by the boolean operations and integers are replaced by the constants $0, 1$. Again, it is easy to define  boolean operations over boolean circuits,  but to check if two such circuits represent the same boolean function (or, equivalently, if a given circuit represents a satisfiable formula) is a much more difficult task (NP-hard). In 1949 Shannon suggested to use the size of a smallest circuit representing a given boolean function $f$ as a measure of complexity of $f$ \cite{Shannon:1949}. Eventually, this idea developed into a major area of modern complexity theory, but this is not the main subject of our paper.

Another powerful application of the ``circuit idea'' is due to Plandowski, who introduced compression of words in a given finite alphabet $X$ \cite{Plandowski:1994}. These compressed words can be realized by circuits over the free monoid $X^*$, where  the  arithmetic operations are replaced by the monoid multiplication, so every such circuit $C$  represents a word $w_C \in X^*$. The crucial point here is that the length of the word $w_C$ can grow exponentially with the size of the circuit $C$, so the standard word algorithms become time-consuming. For instance, the direct algorithm to solve the comparison problem  (if $w_C = w_D$ for given circuits $C, D$)  requires exponential time,  though there are  smart polynomial time (in the size of the circuits) algorithms that can do that  \cite{Plandowski:1994}.  In \cite{Lohrey:2004} Lohrey proved similar results for reduced words in a given free group (in the group language). This brought a whole new host of efficient algorithms in group theory \cite{Schleimer:2008}.

\subsection{Algorithmic problems for algebraic circuits}

In view of the examples above, we introduce here a general notion of an  algebraic circuit and related algorithmic  problems. Let $\mathcal{L}$ be a finite set of symbols of operations (a functional language).  An   {\em algebraic circuit} $C$ in $\mathcal{L}$ (or an $\mathcal{L}$-circuit) is a finite directed graph  whose nodes are either {\em input nodes} or {\em  gates}. The inputs nodes  have in-degree zero and  are each labeled by either variables  or constants from  $\mathcal{L}$; each gate is   labeled by an operation from  $\mathcal{L}$  whose arity equals to the in-degree of the gate; vertices of out-degree zero are called {\em output nodes}.  For a distinguished output vertex
in $C$  one can associate a term $t_C$ as was described above.
 Observe, that this notion of an algebraic circuit is more general than the usual one (see, for example,  \cite{Arora_Barak,Strassen:1990}), where  $\mathcal{L}$ is either the ring  or field theory language. On the other hand, algebraic circuits can be viewed also as {\em straight-line programs} in $\mathcal{L}$ (see \cite{AU:1970} by Aho and Ullman). In our approach to algebraic circuits we  follow  \cite{AU:1970}, even though in this case the orientation of edges is reversed, but this should not confuse the reader.

 There are several basic algorithmic problems associated with  $\mathcal{L}$-circuits over a fixed algebraic structure $A$ in  $\mathcal{L}$.  Denote by $Const(A)$ the set of elements of $A$ which are specified in $\mathcal{L}$ as constants.
  The {\em value problem} (VP) is to find the value of the term $t_C$ under an assignment of variables $\eta: X \to Const(A)$ for a  given $\mathcal{L}$-circuits $C$.
  The {\em value comparison problem} (VCP), mentioned above, is to decide if the terms $t_C$ and $t_D$ take the same value in $A$ under the assignment $\eta$ for given $\mathcal{L}$-circuits $C$ and $D$. For a functional language $\mathcal{L}$, decidability of VCP in $A$ implies decidability of the quantifier-free theory $Th_{qf}(A)$ of $A$.
  More generally, one may allow any assignments $\eta$ with   values in a fixed subset $S$ of $A$. In this case  decidability of VCP in $A$ relative to $S$ is equivalent to decidability of the quantifier-free theory of $A$  in a language $\mathcal{L}_S$ (obtained from $\mathcal{L}$ by adding  constants from $S$).

 If the language $\mathcal{L}$ contains predicates then decidability of $Th_{qf}(A)$ depends completely on decidability of the set of atomic  formulas  in $A$. Recall, that atomic formulas in $\mathcal{L}_S$ are of the form $P(t_1^\eta, \ldots, t_n^\eta)$, where $P$ is a predicate  in $\mathcal{L}$ (including equality) and $t_i^\eta$ is  evaluation of a term $t_i$   under an assignment $\eta$. Notice, that if $A$ is recursive then all the problems above are  decidable in $A$ in the language $\mathcal{L}_A$.

 From now on we deal  only with recursive structures, and our main concern is the time complexity of the decision problems. This brings an important new twist to decision problems. It might happen that the direct evaluation of $t_i$ under $\eta$ is time consuming, so we prefer to keep $t_i^\eta$ in the ``compressed form'' $t_{C_i}$ for some $\mathcal{L}$-circuit $C_i$ and proceed to  checking  whether or not the formula $P(t_{C_1}, \dots, t_{C_n})$ holds in $A$ without computing the values $t_i^\eta$. This is the essence of our approach to computational problems in this paper --  we operate with terms $t$ in their compressed form $C_t$ to speed up computations. Such approach makes the following {\em term-realization  problem} crucial: for a given term  $t(x_1, \ldots, x_n)$ in $\mathcal{L}$ construct in polynomial time an $\mathcal{L}$-circuit $C$  such that $t_C$ gives the function defined by $t$ in $A$. A related  {\em term-equivalence problem} asks   for given $\mathcal{L}$-circuits $C$ and $D$ if  the functions defined in $A$  by $t_C$ and $t_D$ are equal or not. Observe, that $t_C = t_D$ in $A$ if and only if the identity  $\forall X (t_C(X) = t_D(X)$  holds in $A$. So decidability of the term-equivalence problem in $A$ is equivalent to decidability  of the equational theory of $A$ (the set of all identities in $\mathcal{L}_S$ which hold in $A$).

\subsection{Exponential algebras}

In this paper we introduce and study algebraic circuits in exponential algebras. Typically, every such  algebra has a unary exponential function $y = E(x)$  as an operation, besides the standard ring operations of addition and multiplication. Some variations are possible here, so the language may contain additional operations (subtraction, division, multiplication by a power of $2$, etc.) or predicates (ordering, divisibility,  divisibility by a power of $2$, etc.). We refer to such language, in all its incarnations, as to {\em exponential algebra}  language and denote it by $\mathcal{L}_{exp}$. Exponential algebra is a very active part of modern algebra and model theory, it stems from two Tarski's problems. The first one, The High School Algebra Problem, is about axioms  of the equational theory of the high school arithmetic, i.e., the structure $N_{HS} = \langle \mathbb{N}_{>0}; +,\cdot, x^y,1\rangle$, where $\mathbb{N}_{>0}$ is the set of positive integers. Namely, it asks if every identity that holds on $N_{HS}$ logically follows from the classical ``high school axioms'' (introduced by Dedekind in  \cite{Dedekind}).
This problem was settled in the negative by Wilkie in
\cite{Wilkie:2000}, where he gave an explicit counterexample.
Moreover, it was shown that the equational  theory of $N_{HS}$    is not finitely axiomatizable, though decidable (Gurevich \cite{Gurevich:1990} and Macintyre \cite{Macintyre:1981}).   The time complexity of the problem  is unknown. Effective manipulations with terms over $N_{HS}$ are important in numerous applications, it suffices to mention such programs as Mathematica, Maple, etc.

The second Tarski's problem asks whether or not  the elementary theory of the field of reals $\mathbb{R}$ with the exponential function $y = e^x$ in the language is decidable. In the paper \cite{Macintyre_Wilkie:1996} Macintyre and Wilkie proved that the elementary theory of $(\mathbb{R}, e^x)$ is decidable provided the Schanuel's Conjecture holds. The time complexity of the  quantifier-free theory of $(\mathbb{R}, e^x)$ or the term-equivalence problem for algebraic circuits over $(\mathbb{R}, e^x)$  is  unknown (see \cite{Richardson:1969,Richardson:1983,Richardson:1992} for related problems).

\subsection{Our results}

Our main results here concern with  the time complexity of the quantifier-free theory of the typical exponential algebras over natural numbers.
We show that the quantifier-free theory is decidable in polynomial time in a structure ${\tilde N} = \langle \mathbb{N}_{>0}; +,  x\cdot 2^y,  \leq,  1\rangle$, a slight modification of the high-school arithmetic $N_{HS}$,
where, exponentiation and multiplication are replaced by $ x\cdot 2^y$ and the ordering predicate $\leq$ is included.  Of course, substituting 1 for $x$ one gets the exponential function $2^y$.  We show that the term-realization problem in $\tilde N$ is decidable  in polynomial time, as well as the quantifier-free theory $Th_{qf}({\tilde N})$.  This is precisely the case when  the direct evaluation of a term for a particular assignment of variables might result in a superexponentially long number, so  we avoid any direct evaluations of terms and work instead with  algebraic circuits.
The result holds if the partial function $x\cdot 2^{-y}$ is added to the language. In this event  for every quantifier-free sentence  one can decide in polynomial time  whether or not  it holds in $A$, or  is undefined.
Strangely, the  methods we exploit fail for the term-realization problem in the classical high-school arithmetic $N_{HS}$, the size of the resulting circuit may grow exponentially.

The Tarski's problem on decidability  of $(\mathbb{R},e^x)$ generated  very interesting research on exponential rings and fields (see, for example, \cite{Dries1,Macintyre:1991,Macintyre:1991(2),W,Macintyre_Wilkie:1996,Zilber:2004}). In \cite{Dries1,Macintyre:1991(2)} a free commutative ring with exponentiation $\mathbb{Z}[X]^E$  (with basis $X$) was constructed -- a free object in the variety of commutative unitary rings with an extra unitary operation for exponentiation $y = E(x)$. To perform various manipulations with exponential polynomials (elements of $\mathbb{Z}[X]^E$ ) it is convenient to use power circuits, i.e., algebraic circuits over an algebraic structure $\tilde Z = \langle \mathbb{Z}; +, -,  x\cdot 2^y,  \leq,  1\rangle$. The results described above  for $\tilde N$ hold also  in $\tilde Z$, so the term-realization problem and the quantifier-free theory of $\tilde Z$ are  decidable in polynomial time. Whether  these results hold with the  multiplication in the language is an open problem.

In fact, our technique gives decidability in polynomial time of the term-realization problem and the quantifier-free theory of the classical exponential structures $N_{HS}$ and $Z_{exp} = \langle \mathbb{Z}; +, -, x\cdot y, 2^y,  \leq,  1\rangle$ even with the multiplication in the language if one considers only terms in the standard form, i.e., if they are given as exponential polynomials (see \cite{Dries1,Macintyre:1991(2)}).

  All the results mentioned above also hold if the exponentiation in the base $2$ is replaced by an exponentiation in an arbitrary base $n \in \mathbb{N}, n \geq 2$. The argument for base $2$ goes through in the general case as well.

Another application of power circuits comes from the theory of automatic structures, that was introduced by Hodgson \cite{Hodgson:1976}, and Khoussainov and Nerode \cite{KN:1995} (we  refer to a recent survey \cite{Rubin:2008} for details). Automatic structures  form a nice subclass of recursive structures with decidable elementary theories.  Arithmetic with weak division $N_{weak} = \gp{N; S, +, \leq, |_2 }$,  where $x |_2 y$  if and only if $x$ is a power of 2 and $y$ is a
multiple of $x$ (``weak division''), is an important  example of an automatic structure.  It  has the following universal property (see Blumensath and  Gradel \cite{BG:2000}):  an arbitrary  structure $A$ has an automatic presentation if and only if it is interpretable (in model theory sense) in $N_{weak}$. This implies that first-order questions about automatic structures can be reformulated as first-order questions on $N_{weak}$. It is known that the first order theory of $N_{weak}$ is decidable, but its time complexity is non-elementary \cite{BG:2000}.  In view of the above, the complexity of the existential theory of $N_{weak}$ is an open problem of prime interest. Notice, that complexity of the problem depends on the representation of the inputs. It follows from our results on power circuits that the quantifier-free theory of $N_{weak}$ is decidable in polynomial time even when the numbers are presented in the compressed form by power circuits. We mention in passing that it would be interesting to see if the structure ${\tilde N} = \langle \mathbb{N}_{>0}; +,  x\cdot 2^y,  \leq,  1\rangle$ is automatic or not.

 We would like to mention one more application of power circuits, which triggered this research in the first place. In the subsequent paper we use power circuits to solve a well-known  open problem in geometric group theory. In 1969 Baumslag introduced (\cite{Baumslag:1969}) a one relator group
 $$G = \gp{a,b ~;~ (b^{-1}ab)^{-1}a(b^{-1}ab)=a^2},$$
 which later became one of the most interesting examples in geometric group theory.  It has been noticed by Gersten  that the Dehn function of $G$ cannot be bounded by any finite tower of exponents \cite{G1} (see complete proofs and upper bounds in the paper by  Platonov \cite{Platonov}). The Word Problem in $G$ is considered to be the hardest among all known one-relator groups. Recently,  Kapovich and Schupp showed in \cite{KS} that  the Word Problem  in $G$ is decidable in exponential time. Using power circuits we prove in \cite{MUW:2010} that the Word Problem in $G$ is polynomial time decidable.

All the results above are based on a new representation of integers,
which is much more ``compressed'' than the standard binary representation.
This ``power representation'' is interesting in its own sake. We represent
integers by  constant power circuits in the normal form. Such representation
is unique and easily computable: a number $n \in \mathbb{N}$ can be
presented by a normal power circuit $\CP_n$ of size at most $\log_2 n +2$,
and it takes time $O(log_2 n log_2log_2 n)$ to find $\CP_n$.  Furthermore,
we develop algorithms that allow one to perform the standard algebraic
manipulations (in the structure ${\tilde N}$ ) over integers given in
power representation in polynomial time.

\subsection{Outline}

In Section \ref{se:binary_sums} we introduce a  new way to represent integers as binary sums (forms) by allowing coefficients $-1$ in binary representations. In Section \ref{se:elementary_properties} we describe some elementary properties of these forms and  design an algorithm that compares numbers given in such binary forms in linear time (in the size of the forms). In Section
\ref{sec:compact-forms} we introduce ``compact'' binary sums  which give shortest possible representations of numbers, and show that these forms are unique.  It takes linear time (in the size of the standard binary representation) to compute the shortest binary form for a given integer $n$.

In Section \ref{se:power_circuits} we give a definition of a general algebraic circuit in the language
$\mathcal{L} = \{+,-,\cdot, x\cdot2^y\}$ and  define a special type of circuits,
called {\em power circuits}. Power circuits are main technical objects of the paper.
We show in due course that every algebraic circuit in $\CL$ is equivalent in
the structure $\tilde Z = \langle \mathbb{Z}, +,-,\cdot, x\cdot2^y\rangle$
to a power circuit, but power circuits are much easier to work with.
Besides, power circuits give a very compact presentation of natural
numbers, designed specifically for
efficient computations with exponential polynomials.

In Section \ref{se:circuit_types}  we define several important types of circuits:
standard, reduced and normal. The standard ones can be  easily obtained from general
power circuits through some obvious simplifications. The reduced power circuits
output numbers only, they require much stronger rigidity conditions (no redundant
or superfluous pairs of edges, distinct vertices output distinct numbers), which are
much harder to achieve. The normal power circuits are reduced and output numbers in
the compact binary forms. They give a unique compact  presentation of integers, which
is much more compressed (in the worst case) than the canonical binary representations.
This is the main construction of the paper, designed to  speed up  computations in
exponential algebra. We hope that the construction  is interesting in its own right.

In Section \ref{se:reduction} we describe a reduction process which for a given constant power circuit $\CP$ constructs an equivalent reduced power circuit $Reduce(\CP)$ in cubic time in the size of $\CP$. This is the main technical result of the paper.

In Section \ref{se:normal_form_computing}
we show how to compute the normal  power circuit representation of a given
integer $n$ (given in its binary representation) in time
$O(log_2 n ~log_2log_2 n)$.

In Section \ref{se:operations} we describe how to perform the standard arithmetic operations and exponentiations (in the  language $\mathcal{L}$)   over integers given in their power circuit representations. It turns out that the size of the resulting power circuits grows linearly, except for the ones produced by the multiplication (this is the main difficulty when dealing with power circuits).
Finally, we show how to compare (in cubic time) the  values of given constant
power circuits without producing the binary representations of
the actual numbers; and how to  find the  normal form of a given
constant power circuit  (in cubic time).

In Section \ref{se:exponential_algebra_circuits} we solve some problems mentioned earlier in the introduction. Fix a language $\CL = \{+,-,\ast, x\cdot 2^y, x\cdot2^{-y},\le, 0, 1\}$,  its sublanguage $\CL_0$, which is obtained from $\CL$ by removing the multiplication $\ast$; and structures $\MZ_\CL = \gp{\MZ;+,-,\ast, x\cdot 2^y, x\cdot2^{-y}, \le,1}$ and $\tilde Z =  \gp{\MZ;+,-, x\cdot 2^y, x\cdot2^{-y}, \le,1}$. We show that there exists an algorithm that for every algebraic $L$-circuit $C$ finds an equivalent standard power circuit $\CP$, or equivalently, there exists an algorithm which for every term $t$ in the language $\CL$ finds a power circuit $C_t$ which represents a term equivalent to the term $t$ in $\MZ_\CL$.  Moreover, if the term $t$ is in the language $\CL_0$ then the algorithm computes the circuit $C_t$ in linear time in the size of $t$.  For integers and closed terms in $\CL_0$ one can get much stronger results. Let $\CC_{norm}$ be the set of all constant normal power circuits (up to isomorphism). We show that if  $t(X)$ is a term in $\CL_0$ and $\eta:X \to \MZ$ an assignment of variables, then there exists an algorithm which determines if $t(\eta(X))$ is defined in $\MZ_\CL$ (or $\tilde Z$)  or not;
and if defined it then produces the normal
circuit $\CP_t $ that presents the number $t(\eta(X))$ in polynomial time. At the end of the section we prove that the  quantifier-free theory of the structure  $\tilde{Z}$  with all the constants from $\MZ$ in the language is decidable in polynomial time.

In Section \ref{se:difficulties}  we  demonstrate some inherent difficulties when dealing with  products of power circuits (the size of the resulting circuit grows exponentially).

Finally, in Section \ref{se:open_problems} we state some open
problems on complexity of algorithms in the classical exponential algebras.

\section{Binary sums}
\label{se:binary_sums}

In this section we introduce a  new way to represent integers as binary sums (forms) by allowing also coefficients $-1$ in binary representations. In Section \ref{se:elementary_properties} we describe some elementary properties of these forms and  design an algorithm that compares numbers given in such binary forms in linear time (in the size of the forms). In Section
\ref{sec:compact-forms} we introduce ``compact'' binary sums  which give shortest possible representations of numbers, and show that these forms are unique.  It takes linear time (in the size of the standard binary representation) to compute the shortest binary form for a given integer $n$.

\subsection{Elementary properties}
\label{se:elementary_properties}

A {\em binary term} $P(\overline{x}, \overline{y})$ is a term in the language $\{+, -, \cdot, 2^y\}$ (or $\{+, -, x\cdot2^y\}$) of the following type:
\begin{equation}\label{eq:bin_term}
  x_1 2^{y_1} + \ldots +  x_k 2^{y_k} \ \ (\mbox{which we also denote by } \sum_{i=1}^k x_i 2^{y_i}).
\end{equation}
Any assignment of variables $x_i = \varepsilon_i, y_i= q_i$
with  $\varepsilon_i \in \{-1,1\}$ and $q_i \in \mathbb{N}$ ($ i = 1, \ldots,k$) gives an algebraic expression,  called a {\em binary sum} (or {\em a binary form}),
\begin{equation}\label{eq:bin_decomp}
    \varepsilon_1 2^{q_1} + \ldots +  \varepsilon_k 2^{q_k},
\end{equation}
which we also denote by $\sum_{i=1}^k \varepsilon_i 2^{q_i}$ or $ P(\overline{\varepsilon}, \overline{q})$, where $\overline{\varepsilon} = (\varepsilon_1, \ldots, \varepsilon_k)$ $\overline{q} = (q_1, \ldots, q_k)$.
Let $N(\overline{\varepsilon},\overline{q})$ be the integer number  resulting in  performing all the operations in (\ref{eq:bin_decomp}).

The standard binary representation of a natural number  is a binary sum with $\varepsilon_i \in \{0,1\}$. Every integer can be  represented by infinitely many different binary sums. We say that two binary sums are  {\em equivalent} if they represent the same number.
 Furthermore, a
binary sum $P(\overline{\varepsilon}, \overline{q})$ is {\em
reduced} if  the sequence $\overline{q}$ is strictly decreasing.
 The following lemma is obvious.

\begin{lemma} \label{le:unique_positive_bin_sum}
The following hold:
\begin{itemize}
    \item [1)]
For each binary sum
$P(\overline{\varepsilon}, \overline{q})$ there exists an
equivalent reduced binary sum which can be computed in linear time
$O(|\overline{q}|)$.
    \item[2)]
For any positive integer $z$ there exists a unique reduced binary
sum $P(\overline{\varepsilon}, \overline{q})$ with  $\varepsilon_1 = \ldots =
\varepsilon_k = 1$ and  $q_k
\le \lfloor\log_2 z \rfloor$, representing $z$.
Furthermore, it can be found in $O(\log_2 z)$ time.
\end{itemize}
\end{lemma}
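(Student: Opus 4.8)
The plan is to prove both parts directly by elementary manipulation of binary sums, since the statement is indeed routine.

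For part (1), the goal is to transform an arbitrary binary sum $P(\Le,\Lq)$ into an equivalent \emph{reduced} one, where the exponents strictly decrease. First I would sort the terms by exponent; then I would merge all terms sharing a common exponent $q$ into a single coefficient (the algebraic sum of the $\varepsilon_i$ with $y_i = q$). After merging, each distinct exponent carries an integer coefficient, which may now lie outside $\{-1,1\}$. To restore coefficients in $\{-1,0,1\}$ I would apply carrying: whenever a coefficient $c$ at exponent $q$ satisfies $|c| \ge 2$, I rewrite $c\cdot 2^q$ using $2\cdot 2^q = 2^{q+1}$, pushing the overflow up to the next exponent, exactly as in ordinary binary addition but now allowing negative digits. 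Since we start with $k$ terms and each exponent is at most $\max_i q_i + \log_2 k$, the whole process touches $O(|\Lq|)$ positions and runs in linear time $O(|\Lq|)$. Dropping the zero coefficients yields a reduced sum with strictly decreasing exponents.

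For part (2), the claim is that a \emph{positive} integer $z$ has a \emph{unique} reduced binary sum with all coefficients equal to $1$ and top exponent $q_k \le \lfloor \log_2 z\rfloor$. Existence is just the standard binary expansion: write $z = \sum 2^{q_i}$ over the positions where the binary digit of $z$ is $1$, which is a reduced sum with all coefficients $+1$, top exponent $\lfloor \log_2 z\rfloor$, computable in $O(\log_2 z)$ time by reading off the bits. For uniqueness I would argue that the largest exponent is forced: if all coefficients are $+1$ and the exponents are $q_1 > \cdots > q_k$, then the sum lies strictly between $2^{q_1}$ and $2^{q_1+1}$ (using the geometric bound $\sum_{i<q_1} 2^i = 2^{q_1}-1$), so $q_1 = \lfloor \log_2 z\rfloor$ is determined by $z$ alone; then subtract $2^{q_1}$ and induct on the remainder. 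This also confirms the constraint $q_k \le \lfloor\log_2 z\rfloor$ is automatically met.

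I do not anticipate a genuine obstacle here, as the lemma is flagged as ``obvious''; the only point requiring slight care is the linear-time claim in part (1), where one must check that the carrying cascade does not blow up the number of nonzero positions — a single carry can propagate through a run of equal-magnitude digits, but each position is finalized at most a constant number of times, so the total work stays $O(|\Lq|)$. The strictly-between-powers-of-two estimate in part (2) is the one inequality worth stating explicitly, since it is what pins down the top exponent and thereby drives both existence and uniqueness.
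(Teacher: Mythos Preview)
The paper does not prove this lemma at all; it is simply declared ``obvious'' and the text moves on. Your argument is correct and supplies exactly the routine details one would write out: sort--merge--carry for part~(1), and the standard binary expansion together with the squeeze $2^{q_1}\le z < 2^{q_1+1}$ to force the top exponent and induct for part~(2).

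One small caveat worth flagging: your part~(1) begins by sorting the exponents, which costs $\Theta(k\log k)$ in the comparison model rather than $O(k)=O(|\Lq|)$, so the linear-time claim as stated is not literally achieved by your procedure. The paper does not address this either, so the looseness is inherited from the statement rather than introduced by you; if you want to match the claim exactly you would need to assume the exponents are already ordered (or bounded so that a bucket sort applies), or simply weaken the bound to $O(|\Lq|\log|\Lq|)$, which is harmless for every later use in the paper.
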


The unique binary sum representing a given natural number $N$ with all coefficients $\varepsilon_i  = 1$ is called {\em positive normal form} of $N$.

\begin{remark}
Notice, that positive binary representations of numbers   may not be the most efficient. For instance, the
binary sum $2^n-2^0$ is equivalent to $2^{n-1} + 2^{n-2} + \ldots
+ 2^{1} + 2^{0}$ but has much fewer terms.
\end{remark}

\begin{lemma} \label{le:triv_binary} \label{le:positive_reduced_binary}
\label{le:binary_divisibility}
Let $P(\overline{\varepsilon}, \overline{q})$ be a reduced
binary sum. Then:
\begin{enumerate}
\item [1)]  $N(\overline{\varepsilon}, \overline{q}) =
0$ if and only if $|\overline{q}| = 0$ (here $|\overline{q}|$ is the length of the tuple $\overline{q}$).
\item [2)] $N(\overline{\varepsilon}, \overline{q}) > 0$ if and
only if $\varepsilon_1=1$.
\item [3)]  $N(\overline{\varepsilon},
\overline{q})< 0$ if and only if $\varepsilon_1=-1$.
\item[4)] $N(\overline{\varepsilon}, \overline{q})$ is divisible
by $2^n$ if and only if $q_m \ge n$ (here  $m = |\overline{q}|$, and $n \in \mathbb{N}$).
 \item[5)] In the notation above if $N(\overline{\varepsilon}, \overline{q})$ is divisible
by $2^n$ and not divisible by $2^{n+1}$ then $q_m = n$.
\end{enumerate}
\end{lemma}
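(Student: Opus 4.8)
The plan is to deduce all five statements from two elementary observations about a reduced sum $N(\Le,\Lq) = \sum_{i=1}^{k} \varepsilon_i 2^{q_i}$ whose exponents strictly decrease, $q_1 > q_2 > \cdots > q_k$: a \emph{dominance} estimate that controls the sign (parts 1--3), and an exact computation of the $2$-adic valuation of $N$ (parts 4--5). Both rest on the fact that strictly decreasing integer exponents differ consecutively by at least one.

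First I would record the key inequality. Since $q_i \le q_2 - (i-2)$ for every $i \ge 2$, the non-leading terms are bounded by a geometric sum:
\[
  \Bigl| \sum_{i=2}^{k} \varepsilon_i 2^{q_i} \Bigr| \le \sum_{i=2}^{k} 2^{q_i} \le 2^{q_2} + 2^{q_2-1} + \cdots + 2^{q_2-(k-2)} < 2^{q_2+1} \le 2^{q_1},
\]
the last step using $q_2 < q_1$. Thus the leading term $\varepsilon_1 2^{q_1}$ has strictly larger absolute value than the sum of all the others. Part 1 is then immediate: if $k = |\Lq| \ge 1$ then $|N(\Le,\Lq)| \ge 2^{q_1} - |\sum_{i\ge 2}| > 0$, while an empty sum is $0$; hence $N(\Le,\Lq) = 0$ iff $|\Lq| = 0$. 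For parts 2 and 3 the same estimate forces $N(\Le,\Lq)$ to carry the sign of its leading term $\varepsilon_1 2^{q_1}$, so $N(\Le,\Lq) > 0 \iff \varepsilon_1 = 1$ and $N(\Le,\Lq) < 0 \iff \varepsilon_1 = -1$.

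For the divisibility claims I would factor out the smallest power of $2$. With $m = |\Lq|$, so that $q_m = \min_i q_i$, write
\[
  N(\Le,\Lq) = 2^{q_m}\bigl( \varepsilon_1 2^{q_1 - q_m} + \cdots + \varepsilon_{m-1} 2^{q_{m-1} - q_m} + \varepsilon_m \bigr).
\]
Because $q_i - q_m \ge 1$ for each $i < m$, the first $m-1$ summands inside the bracket are even while $\varepsilon_m = \pm 1$ is odd, so the bracketed integer is odd. Hence the $2$-adic valuation of $N(\Le,\Lq)$ equals exactly $q_m$. Part 4 follows: $2^n \mid N(\Le,\Lq)$ iff $n \le q_m$, i.e. $q_m \ge n$. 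Part 5 is the case in which the valuation is pinned down: if $2^n \mid N(\Le,\Lq)$ but $2^{n+1} \nmid N(\Le,\Lq)$, then the valuation of $N(\Le,\Lq)$ is $n$, and comparing with the computed value $q_m$ yields $q_m = n$.

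There is no genuine obstacle here; the only place demanding a moment's care is the geometric bound in the first display, where one must invoke that the exponents are \emph{strictly} decreasing (so consecutive exponents differ by at least one) in order to push the tail sum strictly below $2^{q_1}$. The remaining steps are routine bookkeeping with the $2$-adic valuation.
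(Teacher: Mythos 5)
Your proof is correct, but it takes a different route from the paper's. The paper proves part 1 by splitting the reduced sum into its positive and negative parts, $N(\Le,\Lq) = \sum_{i\in S} 2^{q_i} - \sum_{j\notin S} 2^{q_j}$; since the sum is reduced, these two all-positive binary sums use disjoint exponent sets, so by the uniqueness of positive binary representations (Lemma \ref{le:unique_positive_bin_sum}) they denote different numbers, whence $N \ne 0$. The remaining parts are then dismissed as ``similar.'' You instead work directly: a geometric-series dominance bound showing the leading term $\varepsilon_1 2^{q_1}$ strictly exceeds the absolute value of the tail (which settles parts 1--3 at once), and a factorization $N = 2^{q_m}(\text{odd})$ pinning down the $2$-adic valuation as exactly $q_m$ (which settles parts 4--5). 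Your approach is more self-contained --- it does not invoke the uniqueness lemma at all --- and it is also more complete, since the valuation computation makes the divisibility claims explicit rather than leaving them as an exercise; it moreover yields the slightly stronger fact $v_2(N(\Le,\Lq)) = q_m$, of which parts 4 and 5 are immediate corollaries. The paper's reduction, on the other hand, buys brevity by leaning on machinery already established for positive binary sums. The only caveat, common to both arguments, is the degenerate reading of part 4 when $|\Lq| = 0$ (then $N = 0$ is divisible by every $2^n$ but $q_m$ is undefined); this is a matter of convention, not a gap.
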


\begin{proof} We prove 1), the rest is similar.
If $|\overline{q}| = 0$ then $N(\overline{\varepsilon},
\overline{q}) = 0$. Assume now that $N(\overline{\varepsilon},
\overline{q}) = 0$ and $\overline{q} = (q_1,\ldots,q_k)$, where
$k>0$. Let $S = \{ 1\le i \le k \mid \varepsilon_i>0 \}$. Then
$$N(\overline{\varepsilon}, \overline{q}) = \left( \sum_{i\in S} 2^{q_i} \right) - \left( \sum_{j \in \{1,\ldots, k\} \setminus S} 2^{q_j}\right).$$
The binary sums in the brackets have
coefficients $1$. Since $P(\overline{\varepsilon}, \overline{q})$
is reduced these binary sums are different and by
Lemma \ref{le:unique_positive_bin_sum}  define different
numbers. This implies that  $N(\overline{\varepsilon}, \overline{q}) \ne 0$, and 1) follows by contradiction.
\end{proof}

Let $P(\overline{\varepsilon}, \overline{q})$ be a reduced binary
sum. We say that a pair of powers $(q_{i},q_{i+1})$ in
$P(\overline{\varepsilon}, \overline{q})$ is {\em superfluous} if
$q_{i}=q_{i+1}+1$ and $\varepsilon_{i} = -\varepsilon_{i+1}$. The
next lemma shows that a binary sum with superfluous pairs can be
simplified, by getting rid off such pairs in linear time.

\begin{lemma} \label{le:superfluous}
Given a binary sum $P(\overline{\varepsilon}, \overline{q})$ one can find an equivalent reduced
binary sum without superfluous pairs in liner time $O(|\overline{q}|)$.
\end{lemma}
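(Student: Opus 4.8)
The plan is to first normalize the input to a reduced binary sum and then eliminate superfluous pairs by a single left-to-right sweep whose key feature is that no backtracking is ever needed. First I would apply part 1) of Lemma~\ref{le:unique_positive_bin_sum} to replace $P(\overline{\varepsilon},\overline{q})$ by an equivalent \emph{reduced} binary sum in time $O(|\overline{q}|)$; after this step we may assume $q_1 > q_2 > \dots > q_k$. The whole argument then rests on one reduction move together with a one-directional cascade property.

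The reduction move is the identity behind the definition of a superfluous pair: if $(q_i,q_{i+1})$ is superfluous, so that $q_i = q_{i+1}+1$ and $\varepsilon_{i+1} = -\varepsilon_i$, then
\[
\varepsilon_i 2^{q_i} + \varepsilon_{i+1}2^{q_{i+1}} = \varepsilon_i 2^{q_{i+1}+1} - \varepsilon_i 2^{q_{i+1}} = \varepsilon_i 2^{q_{i+1}}.
\]
So I would replace the two terms $\varepsilon_i 2^{q_i}$ and $\varepsilon_{i+1}2^{q_{i+1}}$ by the single term $\varepsilon_i 2^{q_{i+1}}$. This preserves the represented number, decreases the number of terms by one, and keeps the sum reduced, since the exponent $q_{i+1}$ still lies strictly between $q_{i-1}$ and $q_{i+2}$.

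The crucial observation --- and the step I expect to be the main obstacle in turning this into a genuinely linear-time procedure --- is that such a move can only ever create a \emph{new} superfluous pair on the right. Indeed, the replacement term has exponent $q_{i+1}$, while its left neighbour has exponent $q_{i-1} > q_i = q_{i+1}+1$, hence $q_{i-1} \ge q_{i+1}+2$, so $(q_{i-1},q_{i+1})$ can never be superfluous; only the pair $(q_{i+1}, q_{i+2})$ with the right neighbour might become so. Because the cascade propagates in one direction only, a single pass without backtracking suffices. Concretely, I would scan the terms in order of decreasing exponent while maintaining an output stack: for each incoming term I compare it with the current top, and if together they form a superfluous pair I pop the top and push the merged term $\varepsilon_i 2^{q_{i+1}}$, otherwise I push the incoming term. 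The one-directional property guarantees that a freshly pushed merged term never needs to be compared with the element below it (that element has exponent at least $q_{i+1}+2$), while any superfluous pair it creates on the right is detected when the next incoming term is processed; thus each of the $k$ terms triggers at most one pop and one push.

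Finally I would bound the running time: the normalization step is $O(|\overline{q}|)$, and the sweep performs $O(1)$ work per term over at most $k = |\overline{q}|$ terms, for a total of $O(|\overline{q}|)$. The output is reduced, maintained as an invariant throughout the sweep, and it contains no superfluous pairs, since the sweep halts only when no top-versus-incoming comparison yields one and, by the one-directional property, no move can leave behind an undetected pair to the left. This completes the proof.
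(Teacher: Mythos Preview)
Your proof is correct and follows the same approach as the paper: repeatedly apply the local identity $\varepsilon_i 2^{q_{i+1}+1} - \varepsilon_i 2^{q_{i+1}} = \varepsilon_i 2^{q_{i+1}}$ to collapse each superfluous pair into a single term. The paper's argument is terser---it simply notes that each such move shortens the sum by one and asserts the linear bound---whereas you additionally supply the one-directional cascade observation and the single-pass stack implementation that makes the $O(|\overline{q}|)$ claim rigorous; this is a welcome elaboration of the same idea.
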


\begin{proof}
Let $(q_i,q_{i+1})$ be a superfluous pair in
$P(\overline{\varepsilon}, \overline{q})$. Define
$$\overline{q}' = (q_1 , \ldots , q_{i-1},q_{i+1}, \ldots, q_n)$$
and
$$\overline{\varepsilon}' = ( \varepsilon_1 , \ldots , \varepsilon_{i-1},-\varepsilon_{i+1}, \ldots, \varepsilon_n).$$
The equality $\mp 2^{i+1} \pm 2^{i} = \mp 2^{i}$  implies  $N(\overline{\varepsilon}, \overline{q}) =
N(\overline{\varepsilon}', \overline{q}')$. Clearly,  it requires linear number  (in $|\overline{q}|$) of steps like that to eliminate all superfluous pairs in $P(\overline{\varepsilon}, \overline{q})$.
\end{proof}

  For a reduced
binary sum $\CP(\Lq,\Le)$ define
$$\varepsilon(\CP,q)
 = \left\{
\begin{array}{ll}
\varepsilon_j, & \mbox{if there exists (unique)  $j$ such that} \ q_j=q;\\
0,             & \mbox{otherwise.}
\end{array}
\right.
$$
The following technical lemma gives the main tool for efficiently comparing values of binary sums.

\begin{lemma} \label{le:comp_pos_binary_decomp}
Let $A = P(\Le,\Lq)$ and $B = P(\Ld,\Lr)$ be reduced binary sums
without superfluous pairs, $k = |\Lq|$, and $m = |\Lr|$. Put $n =
\max\{q_1,r_1\}$, $\alpha_1 = \varepsilon(A,n)$, $\alpha_2 =
\varepsilon(A,n-1)$, $\beta_1 = \varepsilon(B,n)$, and $\beta_2 =
\varepsilon(B,n-1)$. Then the  following hold:
\begin{enumerate}
    \item[1)]
If $\alpha_1 = 1$ and $\beta_1 = -1$ then $N(A)-N(B)\ge
2$. Similarly, if $\alpha_1 = -1$ and $\beta_1 = 1$ then
$N(A)-N(B) \le 2$.
    \item[2)]
Assume $\alpha_1 = 1$ and $\beta_1 = 1$, or $\alpha_1 = -1$ and
$\beta_1 = -1$. Define $A' = P(\Le',\Lq')$ and $B' =
P(\Ld',\Lr')$, where $\Lq' = (q_2,\ldots,q_k)$, $\Le' =
(\varepsilon_2,\ldots,\varepsilon_k)$, $\Lr' = (r_2,\ldots,r_m)$,
$\Ld' = (\delta_2,\ldots,\delta_m)$. Then $N(A)-N(B) = N(A') -
N(B')$.
    \item[3)]
Assume $\alpha_1=1$ and $\beta_1=0$:
\begin{enumerate}
    \item[a)]
If $\alpha_2 = 1$ then $N(A)-N(B)\ge 2$.
    \item[b)]
If $\alpha_2 = 0$ and $\beta_2 <1$ then $N(A)-N(B)\ge 2$.
    \item[c)]
If $\alpha_2 = 0$ and $\beta_2 =1$ define $A' = P(\Le',\Lq')$ and
$B' = P(\Ld',\Lr')$, where $\Lq' = (n-1,q_2,\ldots,q_k)$, $\Le' =
(1, \varepsilon_2, \ldots, \varepsilon_k)$, $\Lr' =
(r_2,\ldots,r_m)$, $\Ld' = (\delta_2,\ldots,\delta_m)$. Then
$N(A)-N(B) = N(A') - N(B')$.
\end{enumerate}
\end{enumerate}
\end{lemma}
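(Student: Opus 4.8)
The plan is to reduce all five statements to two elementary facts about reduced binary sums. The first is the \emph{sign criterion} of Lemma~\ref{le:binary_divisibility}(2)--(3): a reduced sum $C$ with leading coefficient $+1$ has $N(C)\ge 1$, and one with leading coefficient $-1$ has $N(C)\le -1$. The second is a \emph{magnitude estimate} which I would record first: if $C$ is reduced with largest power $p$, then, since its powers are distinct and bounded by $p$, one has $|N(C)|\le\sum_{j=0}^{p}2^j=2^{p+1}-1$; moreover, if the leading coefficient is $+1$ and $C$ has no term at power $p-1$, then the entire tail sits on powers $\le p-2$, so $N(C)\ge 2^{p}-(2^{p-1}-1)=2^{p-1}+1$. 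Both estimates use only reducedness.

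For 1) the hypotheses $\alpha_1=1,\beta_1=-1$ force $q_1=r_1=n$ with $\varepsilon_1=1$ and $\delta_1=-1$; the sign criterion then gives $N(A)\ge 1$ and $N(B)\le -1$, hence $N(A)-N(B)\ge 2$, and the second assertion is the mirror image (yielding $N(A)-N(B)\le -2$). For 2) the condition $\alpha_1=\beta_1\ne 0$ again forces $q_1=r_1=n$ and $\varepsilon_1=\delta_1$, so the leading terms $\varepsilon_1 2^n$ and $\delta_1 2^n$ are equal and cancel in $N(A)-N(B)$, giving $N(A)-N(B)=N(A')-N(B')$ directly.

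For 3) I first note $\beta_1=0$ forces $r_1\le n-1$, so $|N(B)|\le 2^{n}-1$, while $\alpha_1=1$ gives $q_1=n,\ \varepsilon_1=1$. In a) the term $\alpha_2=1$ makes $N(A)\ge 2^{n}+2^{n-1}-(2^{n-1}-1)=2^{n}+1$, so $N(A)-N(B)\ge 2$. In b) I split on $\beta_2$: if $\beta_2=-1$ then $B$ has leading coefficient $-1$, so $N(B)\le -1$ and $N(A)\ge 1$ settle it; if $\beta_2=0$ then $B$ has no term at $n$ or $n-1$, so $r_1\le n-2$ and $|N(B)|\le 2^{n-1}-1$, while the refined estimate gives $N(A)\ge 2^{n-1}+1$, again a gap of at least $2$. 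In c) the case $\alpha_2=0,\beta_2=1$ is an identity: $A$ has leading term $2^{n}$ and no term at $n-1$, whereas $B$ has leading term exactly $2^{n-1}$; passing to $A'$ rewrites $2^{n}$ as $2^{n-1}$ and so lowers $N(A)$ by $2^{n-1}$, while passing to $B'$ deletes $B$'s leading term and lowers $N(B)$ by $2^{n-1}$, leaving the difference unchanged. Here $\alpha_2=0$ also guarantees $q_2\le n-2$, so $A'$ is still reduced.

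The one place the hypothesis \emph{without superfluous pairs} is genuinely needed is to make the subcases of 3) exhaustive: under $\alpha_1=1$, a coefficient $\alpha_2=-1$ would mean $q_2=n-1$ with $\varepsilon_2=-\varepsilon_1$, i.e.\ the pair $(q_1,q_2)$ would be superfluous; excluding it leaves exactly $\alpha_2\in\{0,1\}$, covered by a) and by b)--c). The computations themselves are routine; the only real care lies in (i) reading off from each pattern of $\alpha$'s and $\beta$'s the precise leading powers and signs of $A$ and $B$, and (ii) using the \emph{sharpened} lower bound $2^{p-1}+1$ rather than the naive $N\ge 1$, since the latter is too weak in case 3b with $\beta_2=0$. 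The main obstacle is therefore organizational: laying out the case analysis so that it is visibly exhaustive and invoking the stronger magnitude bound exactly where a mere positivity statement fails.
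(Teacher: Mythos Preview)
Your proposal is correct and follows essentially the same approach as the paper: both proofs reduce each case to elementary magnitude bounds on reduced binary sums together with the sign criterion of Lemma~\ref{le:positive_reduced_binary}. One minor difference worth noting is how the \emph{no superfluous pairs} hypothesis is deployed: the paper invokes it in case~3.a) to sharpen the lower bound on $N(A)$ (obtaining $N(A)\ge 2^n+2^{n-1}-2^{n-2}+1$), whereas you get by with the cruder bound $N(A)\ge 2^n+1$ from reducedness alone and use the hypothesis only to exclude $\alpha_2=-1$ and make the subcases of~3) exhaustive---both uses are valid, and your organization makes the role of the hypothesis more transparent.
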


\begin{proof}
In the case 1) by Lemma \ref{le:positive_reduced_binary} $N(A)
\ge 1$ and $N(B) \le -1$, so the statement holds. In the case 2)
 $N(A') = N(A) - 2^{n}$ and $N(B') = N(B) -
2^{n}$ and the statement holds. In the case 3.a)
 $N(A) \ge 2^{n} +2^{n-1}-2^{n-2}+1$ and $N(B) \le 2^{n} +
2^{n-1} -1$ (since  $A$ and $B$ have no
superfluous pairs). In the case 3.b) $N(A) \ge 2^{n-1}+1$ and $N(B)
\le 2^{n-1}-1$. In the  case 3.c) $N(A') = N(A)
- 2^{n-1}$ and $N(B') = N(B) - 2^{n-1}$.  These imply that 3) holds.
\end{proof}

\begin{proposition} \label{pr:complexity_compare}
For given binary sums $P(\overline{\varepsilon}, \overline{q})$ and
$P(\overline{\delta}, \overline{r})$ it takes linear time $C(|\overline{q}|+|\overline{r}|)$ to compare
the values $N(\overline{\varepsilon}, \overline{q})$ and
$N(\overline{\delta}, \overline{r})$.
\end{proposition}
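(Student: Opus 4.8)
The plan is to derive the proposition from Lemma~\ref{le:comp_pos_binary_decomp} by turning that lemma into a constant-time ``peeling'' step and then bounding the number of steps by a simple potential. First I would normalize the inputs: by Lemma~\ref{le:unique_positive_bin_sum}(1) followed by Lemma~\ref{le:superfluous} I replace $P(\overline{\varepsilon},\overline{q})$ and $P(\overline{\delta},\overline{r})$ by equivalent reduced binary sums $A$ and $B$ having no superfluous pairs, at cost $O(|\overline{q}|+|\overline{r}|)$ and without increasing the number of terms. This is exactly the hypothesis required by Lemma~\ref{le:comp_pos_binary_decomp}, and it leaves the values $N(A)=N(\overline{\varepsilon},\overline{q})$ and $N(B)=N(\overline{\delta},\overline{r})$ unchanged, so it suffices to compare $N(A)$ with $N(B)$.

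The core is to show that one application of Lemma~\ref{le:comp_pos_binary_decomp} either decides the sign of $N(A)-N(B)$ or replaces $(A,B)$ by a strictly smaller pair $(A',B')$, in constant time. I set $n=\max\{q_1,r_1\}$ and read the top coefficients $\alpha_1,\alpha_2,\beta_1,\beta_2$ as in the lemma; note that $(\alpha_1,\beta_1)$ cannot be $(0,0)$ unless both sums are empty, since $n$ is the top power of at least one of them. Using the two symmetries $A\leftrightarrow B$ and simultaneous negation of all signs in $A$ and $B$ (each of which flips the sign of $N(A)-N(B)$), I bring $(\alpha_1,\beta_1)$ into one of the forms covered by the lemma. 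In cases $1)$, $3a)$, and $3b)$ the lemma yields $|N(A)-N(B)|\ge 2$ and hence the strict comparison, so the algorithm stops; in case $2)$ and case $3c)$ it gives $N(A)-N(B)=N(A')-N(B')$ for the explicitly described $A',B'$, and the algorithm recurses. If one of the sums becomes empty, Lemma~\ref{le:positive_reduced_binary}(1)--(3) reads off the comparison in constant time from the leading sign of the other.

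Finally I bound the number of recursive steps by the potential $\Phi=|\overline{q}|+|\overline{r}|$, the total number of terms. In case $2)$ both top terms are removed, so $\Phi$ drops by $2$; in case $3c)$ the top term of $B$ is deleted, so $\Phi$ drops by at least $1$. Each step is $O(1)$, giving at most $\Phi\le|\overline{q}|+|\overline{r}|$ steps and hence linear total time. The step requiring care --- \emph{the main obstacle} --- is case $3c)$: the replacement lowers the top power of $A$ from $n$ to $n-1$ (legitimate precisely because $\alpha_2=0$ means $A$ has no term at power $n-1$, so the new sum stays reduced), but it may create a fresh superfluous pair at the new top of $A$. Since Lemma~\ref{le:comp_pos_binary_decomp} presupposes the absence of superfluous pairs, I must re-invoke Lemma~\ref{le:superfluous} to delete it before the next step. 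Such a deletion only decreases $\Phi$ further, and the cascade of deletions across the whole run is bounded by the total number of terms, so this cleanup contributes only $O(|\overline{q}|+|\overline{r}|)$ additional work and the bound $C(|\overline{q}|+|\overline{r}|)$ is preserved.
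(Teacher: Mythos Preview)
Your proposal is correct and follows essentially the same approach as the paper: preprocess via Lemmas~\ref{le:unique_positive_bin_sum} and~\ref{le:superfluous}, then iterate Lemma~\ref{le:comp_pos_binary_decomp} using the potential $|\overline{q}|+|\overline{r}|$, noting in particular that case~3c) may introduce a superfluous pair in $A'$ that must be cleaned up before the next step. Your explicit invocation of the two symmetries (swapping $A\leftrightarrow B$ and negating all signs) to reduce to the cases stated in the lemma is a detail the paper leaves implicit.
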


\begin{proof}
By Lemmas \ref{le:unique_positive_bin_sum} and \ref{le:superfluous}
one can reduce and get rid off superfluous pairs in given binary sums in linear time.
Now, let $A = P(\Le,\Lq)$ and $B = P(\Ld,\Lr)$ be reduced binary sums
without superfluous pairs. In the notation of Lemma \ref{le:comp_pos_binary_decomp} one can describe the comparison algorithm as follows. Determine the
values $\alpha_1, \alpha_2$ and $\beta_1$, $\beta_2$. If they satisfy either of the case 1,  3.a, or 3.b
 then the answer follows immediately from the  lemma. Otherwise, they satisfy either the case 2 or
3.c, and one  can compute new binary sums $A'$ and $B'$ such that
$N(A')-N(B') = N(A)-N(B)$ and $|A'|+|B'| < |A|+|B|$, and compare
their values.   Notice  that  the binary sum
$A'$ in case 3.c) might contain a superfluous pair, which should be removed in the simplification process.

Now we describe the comparison algorithm formally.

\begin{algorithm} \label{al:compare_binaries}({\em To compare values of reduced
binary sums with no superfluous pairs.})
    \\{\sc Input.}
$P(\overline{\varepsilon}, \overline{q})$ and
$P(\overline{\delta}, \overline{r})$ two reduced binary sums with
no superfluous pairs of powers.
    \\{\sc Output.}
$$
\left\{
\begin{array}{rl}
-2, & \mbox{if } N(\overline{\varepsilon},
\overline{q})< N(\overline{\delta}, \overline{r})-1 \\
-1, & \mbox{if } N(\overline{\varepsilon},
\overline{q})= N(\overline{\delta}, \overline{r})-1 \\
0, & \mbox{if } N(\overline{\varepsilon},
\overline{q})= N(\overline{\delta}, \overline{r}) \\
1, & \mbox{if } N(\overline{\varepsilon},
\overline{q})= N(\overline{\delta}, \overline{r})+1 \\
2, & \mbox{if } N(\overline{\varepsilon},
\overline{q})> N(\overline{\delta}, \overline{r})+1 \\
\end{array}
\right.
$$
{\sc Computations.}
\begin{enumerate}
    \item[A)]
Remove all superfluous pairs  from
$P(\overline{\varepsilon}, \overline{q})$ and
$P(\overline{\delta}, \overline{r})$.
    \item[B)]
Compute $n = \max\{q_1,r_1\}$.
    \item[C)]
If $n\le 1$ then the current binary sums
$P(\overline{\varepsilon}, \overline{q})$ and
$P(\overline{\delta}, \overline{r})$ are at most one-bit numbers.
Compute them, compare, and output the result.
    \item[D)]
If $n>1$ then compute $\alpha_1 = \varepsilon(P(\Le,\Lq),n)$,
$\alpha_2 = \varepsilon(P(\Le,\Lq),n-1)$, $\beta_1 =
\varepsilon(P(\Ld,\Lr),n)$, and $\beta_2 =
\varepsilon(P(\Ld,\Lr),n-1)$.
    \item[E)]
Determine if $(\alpha_1,\alpha_2)$ and $(\beta_1,\beta_2)$ satisfy one of
the cases 1, 3.a, or 3.b from Lemma
\ref{le:comp_pos_binary_decomp}. If so, return the result
prescribed in Lemma.
    \item[F)]
Determine if $(\alpha_1,\alpha_2)$ and $(\beta_1,\beta_2)$ satisfy one of
the cases 2 or 3.c. If so, compute new binary sums $A'$ and
$B'$ as prescribed in Lemma \ref{le:comp_pos_binary_decomp} put
$P(\Le,\Lq) = A'$ and $P(\Ld,\Lr) = B'$ and goto A).
\end{enumerate}
\end{algorithm}

Notice, that each iteration of Algorithm \ref{al:compare_binaries} decreases
the number $|\overline{q}|+|\overline{r}|$ at least  by $1$, so the algorithm terminates in at most   $C(|\overline{q}|+|\overline{r}|)$ steps, as claimed.

\end{proof}

\subsection{Shortest binary forms}
\label{sec:compact-forms}

Let $P(\Le,\Lq)$ be a reduced binary sum, where
$\Lq = (q_1,q_2,\ldots,q_k)$, and $\Le = (\varepsilon_1,\ldots,\varepsilon_k)$.
We say that $P(\Le,\Lq)$ is {\em compact} if $q_{i+1}-q_i \ge 2$ for every $i=1,\ldots,k-1$.

\begin{lemma}\label{le:compact_sum}
The following hold:
\begin{enumerate}
    \item[(1)]
For any $n\in\MN$ there exists  a unique compact binary sum
$P_n = \varepsilon_1 2^{q_1} + \ldots +  \varepsilon_k 2^{q_k}$
representing $n$. Furthermore, $k, q_1, \ldots, q_k \leq \log_2n$ and
$P_n$ can be found in linear time $O(\log_2 n)$.
    \item[(2)]
A compact binary sum representation of a given number involves
the least possible number of terms.
    \item[(3)]
Given a binary sum one can find an equivalent compact binary sum in linear time.
\end{enumerate}
\end{lemma}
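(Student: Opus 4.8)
The plan is to prove the three parts of Lemma~\ref{le:compact_sum} in the order (3), (1), (2), since an efficient reduction-to-compact-form procedure immediately yields existence in (1), and uniqueness is the part that requires the most care.

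\textbf{Part (3): existence of an equivalent compact form in linear time.} First I would start from a reduced binary sum without superfluous pairs, which is available in linear time by Lemmas~\ref{le:unique_positive_bin_sum} and~\ref{le:superfluous}. A reduced sum fails to be compact exactly when two adjacent powers differ by one, i.e. $q_{i+1} - q_i = 1$; since superfluous pairs (where additionally $\varepsilon_i = -\varepsilon_{i+1}$) have been removed, the only remaining adjacency of gap $1$ has $\varepsilon_i = \varepsilon_{i+1}$. The key local rewriting rule is the carry identity $\pm 2^{q} \pm 2^{q+1} = \pm 2^{q+2} \mp 2^{q}$, which replaces a same-sign adjacent pair by two terms whose powers are spread farther apart. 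I would process the sum from the least significant power upward, applying this carry rule to propagate the ``$+2^{q+2}$'' term into the higher positions; this is exactly a signed-binary normalization, analogous to computing a non-adjacent form. The main care is bookkeeping: a carry into position $q+2$ may itself create a new gap-one adjacency or a new superfluous pair, so I would maintain the invariant that everything below the current scan position is already compact and interleave superfluous-pair removal as in Lemma~\ref{le:superfluous}. Each carry strictly advances the scan position or reduces the term count, so a single left-to-right sweep suffices, giving linear time in $|\overline{q}|$.

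\textbf{Part (1): existence, uniqueness, and the size bounds.} Existence for any $n \in \MN$ follows from part (3) applied to the positive normal form of $n$. For the size bounds, observe that the largest power $q_k$ in a compact form representing $n$ satisfies $q_k \le \lfloor \log_2 n \rfloor + 1$: since the gaps are at least $2$, the leading term $\pm 2^{q_k}$ dominates the tail by a geometric estimate $\sum_{i<k} 2^{q_i} < 2^{q_k - 2} \cdot \frac{1}{1 - 1/4} = \frac{1}{3}2^{q_k}$, so $|N| \ge \frac{2}{3}2^{q_k}$, forcing $q_k \le \log_2 n + O(1)$; the number of terms $k$ and each $q_i$ are then bounded by the same quantity. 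The hard part is \emph{uniqueness}. I would argue it via the comparison machinery of Lemma~\ref{le:comp_pos_binary_decomp}, or directly by induction on the top power: given two compact forms for the same $n$, the compactness condition $q_{i+1}-q_i \ge 2$ guarantees (by the same geometric tail bound) that the sign and position of the \emph{leading} term are uniquely determined by $n$ — specifically, $\varepsilon_k$ is the sign of $n$ and $q_k$ is pinned down because the tail cannot reach into the $2^{q_k-1}$ band. Stripping off the leading term reduces to a strictly smaller instance, and induction closes the argument.

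\textbf{Part (2): minimality of the number of terms.} Finally I would show a compact form uses the fewest terms among all binary sums for $n$. The clean way is to invoke a lower bound: any binary sum with $t$ terms can be rewritten (by combining and carrying) into a form with at most $t$ terms, and a counting/invariant argument shows the compact form's term count is a lower bound for representability. I expect the cleanest route is to compare against the reduced form and use the carry identity to argue that no rewriting can decrease the term count below that of the compact form, since each carry either preserves or increases separation without reducing the essential ``signed digit weight.''

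\medskip

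The step I expect to be the genuine obstacle is \emph{uniqueness in part (1)}: the geometric tail bound must be made tight enough to exclude the boundary interference between the leading term $\pm 2^{q_k}$ and the next band $2^{q_k-1}$, and this is precisely where the gap-two compactness condition (rather than gap-one) is indispensable. Getting the constants right in $\sum_{i<k}2^{q_i} < \frac{1}{3}2^{q_k}$ and verifying it leaves no ambiguity in $(\varepsilon_k, q_k)$ is the crux; everything else is routine bookkeeping built on the earlier lemmas.
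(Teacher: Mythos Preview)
Your approach is essentially the same as the paper's: both reach the compact form via local carry-based rewriting, both prove uniqueness by pinning down the leading term with a geometric tail bound and inducting, and both derive minimality from the fact that the rewriting does not increase length.

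The one presentational difference worth noting is that the paper packages the rewriting as a terminating, locally confluent system (the rules $2^m+2^m\to 2^{m+1}$, $2^m-2^m\to\varepsilon$, $2^{m+1}+2^m\to 2^{m+2}-2^m$, $2^{m+1}-2^m\to 2^m$) rather than an explicit left-to-right scan. This framing makes Part~(2) a one-liner: every rule is length-non-increasing and the system terminates in the unique compact form, so that form has minimal length among all binary sums for $n$. You already have both ingredients (non-increasing rewrite and uniqueness) --- just combine them directly; there is no need for a separate ``signed digit weight'' invariant, and your final paragraph on Part~(2) is overcomplicating something that falls out immediately.
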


\begin{proof}
By Lemma \ref{le:unique_positive_bin_sum} for $n\in\MN$ we can find a reduced
binary sum $\CP$ representing $n$ in time $O(\log_2 n)$. Below we prove the existence
and uniqueness of a compact binary sum equivalent to $\CP$.

{\bf Existence.} Consider any binary sum $P(\Le,\Lq)$.
Consider the following finite rewriting system $\mathcal{C}$ on binary sums: a system of transformations of binary sums
$$
\left\{
\begin{array}{l}
2^{m}+2^{m} \rightarrow 2^{m+1}\\
2^{m}-2^{m} \rightarrow \varepsilon\\
2^{m+1} +2^{m} \rightarrow 2^{m+2}-2^{m}\\
2^{m+1} -2^{m} \rightarrow 2^{m}\\
\end{array}
\right.
$$
Obviously, each application of a rule from $\mathcal{C}$ to a binary sum results in an equivalent binary sum, which is either shorter or has the same length as the initial sum.
It is easy  to see that the system $\mathcal{C}$ is terminating, i.e., starting on a given binary sum $P(\Le,\Lq)$ after finitely many steps of rewriting one arrives to a sum that no rule from $\mathcal{C}$ can be applied to. Observe,  that the number of steps required here is at most linear in the length of $P(\Le,\Lq)$. Furthermore,
the system  $\mathcal{C}$ is locally confluent, hence confluent (see \cite{book93} for definitions). This implies that the rewriting of a given binary sum always results in a compact form and such a form does not depend on the rewriting process. In particular, applying the rewriting process to the standard binary representation of a given natural number  $n$ one can find the shortest binary form of $n$ (and of $-n$) in linear time.

{\bf Uniqueness.}
Consider two compact binary sums
$$P(\Le,\Lq) = \sum_{i=1}^k \varepsilon_i 2^{q_i}, \ \ \  P(\Ld,\Lp) = \sum_{i=1}^s \delta_i 2^{p_i}.$$
Observe that
\begin{itemize}
    \item
If $\varepsilon_k\ne \delta_s$ then $N(\Le,\Lq)$ and $N(\Ld,\Lp)$
have opposite signs, in particular $N(\Le,\Lq) \ne N(\Ld,\Lp)$.
    \item
If $\varepsilon_k=\delta_s = 1$ and $q_k > p_s$ then
    $$N(\Le,\Lq)-N(\Ld,\Lp) \ge (2^{q_k}-2^{q_k-2}-2^{q_k-4}-\ldots)-(2^{q_k-1}+2^{q_k-3}+2^{q_k-5}+\ldots) \ge 1.$$
In particular $N(\Le,\Lq) \ne N(\Ld,\Lp)$.
    \item
Similarly,  $N(\Le,\Lq) \ne N(\Ld,\Lp)$
whenever $\varepsilon_k=\delta_s = -1$ and/or $q_k < p_s$.
\end{itemize}
Therefore, equality $N(\Le,\Lq) = N(\Ld,\Lp)$ implies that
$\varepsilon_k=\delta_s$ and $q_k = p_s$. Using this  it is easy
to prove that two compact binary sums representing the same number
are equal.

{\bf Minimality.}
Any non-compact binary sum can be rewritten into an equivalent compact binary sum
by the length non-increasing system $\mathcal{C}$.
Therefore, the compact binary sums involve the least
possible number of terms.
\end{proof}

\begin{lemma}\label{le:binary_to_compact}
Suppose $P(\Le,\Lq)$ is reduced and $P(\Ld,\Lp)$ is the equivalent compact binary sum.
Then for every $d\in \Lp$ either $d\in \Lq$ or $d-1 \in \Lq$.
Furthermore, if $N(\Le,\Lq)\ne 0$ then the compact binary sum representing the number $N(\Le,\Lq)+1$
satisfies the same condition.
\end{lemma}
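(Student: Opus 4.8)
The plan is to read the compact form through the confluent rewriting system $\mathcal{C}$ of Lemma~\ref{le:compact_sum}: since the compact form is unique and $\mathcal{C}$ is confluent, I may rewrite $P(\Le,\Lq)$ in whatever order I like, and I will always rewrite at the \emph{lowest} exponent first. This turns the reduction into a single low-to-high carry propagation. Writing the reduced input as a sequence of signed digits $d_i\in\{-1,0,1\}$ (so $d_i\ne 0$ exactly when $i$ is an exponent occurring in $\Lq$), I process positions $i=0,1,2,\dots$, combining $d_i$ with an incoming carry $h_i$, emitting the compact digit $c_i$, and passing a carry $h_{i+1}=(d_i+h_i-c_i)/2$ upward; here $c_i$ is fixed by the non-adjacency (gap $\ge 2$) requirement. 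I abbreviate the exponent sets of the two forms again by $\Lq$ and $\Lp$, and put $\Lq+1:=\{q+1 : q\in\Lq\}$, so that the first assertion is exactly $\Lp\subseteq \Lq\cup(\Lq+1)$.

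The engine of the proof is the claim that the carry stays bounded, $h_i\in\{-1,0,1\}$, together with the observation that a carry can only \emph{pass through} a position lying outside $\Lq$ when the next original digit is nonzero. Granting this, suppose $c_i\ne 0$, i.e. $t_i:=d_i+h_i$ is odd; since $d_i,h_i\in\{-1,0,1\}$ this forces exactly one of $d_i,h_i$ to be nonzero. If $d_i\ne 0$ then $i\in\Lq$ and we are done. If instead $h_i\ne 0$ and $d_i=0$, then the carry was produced at position $i-1$; a pass-through there (carry out of $i-1$ with $d_{i-1}=0$) would force $d_i\ne 0$, contradicting $d_i=0$, so the carry must have been \emph{generated} at $i-1$, whence $d_{i-1}\ne 0$ and $i-1\in\Lq$. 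In both cases $i\in\Lq\cup(\Lq+1)$, which is precisely the first statement. The bottom exponent needs no carry analysis: by Lemma~\ref{le:binary_divisibility}(4)--(5) both forms have least exponent equal to the $2$-adic valuation of $N(\Le,\Lq)$, so $\min\Lp=\min\Lq\in\Lq$.

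I expect the bounded-carry/pass-through bookkeeping to be the one delicate point, since the signed digits and the mod-$4$ rule defining $c_i$ couple position $i$ to position $i+1$. Concretely I would prove by induction on $i$ that $h_i\in\{-1,0,1\}$ and that whenever $h_{i+1}\ne 0$ with $d_i=0$ one necessarily has $d_{i+1}\ne 0$: a short case check on $t_i\in\{-2,\dots,2\}$ shows no carry is produced when $t_i$ is even and $|t_i|\le 1$, while a nonzero carry out of a position with zero original digit can only occur with $t_i$ odd, and the non-adjacency rule then forces the original digit one step higher to be nonzero. In the language of $\mathcal{C}$ this is exactly the statement that, under the lowest-first discipline, whenever a carry-producing rule ($2^m+2^m\to 2^{m+1}$ or $2^{m+1}+2^m\to 2^{m+2}-2^m$) fires, the top exponent of the pair already belongs to $\Lq$, so the created exponent lands in $\Lq+1$; an isolated carry bit sitting over a position outside $\Lq$ simply dies.

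Finally, the furthermore clause follows by running the very same process on $N(\Le,\Lq)+1$, which is represented by the signed-digit sequence of $\Lq$ with one extra unit $+2^0$ inserted at position $0$. The only new source position is $0$, so the identical invariant yields that every exponent of the compact form of $N(\Le,\Lq)+1$ lies in $\Lq\cup(\Lq+1)\cup\{0\}$. When $N(\Le,\Lq)$ is odd we have $0\in\Lq$, the extra bottom bit is absorbed into $\Lq$, and the compact form of $N(\Le,\Lq)+1$ satisfies the same condition verbatim; the only genuinely new exponent that can occur is the bottom bit $2^0$ produced when $N(\Le,\Lq)$ is even, and non-adjacency rules out any spurious exponent at position $1$.
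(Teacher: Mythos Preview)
Your proof of the main clause is correct and takes a genuinely different route from the paper. The paper first strips superfluous pairs and decomposes what remains into maximal blocks of consecutive same-sign exponents $\varepsilon(2^{a}+\cdots+2^{a+b})$, rewrites each block as $-\varepsilon\,2^{a}+\varepsilon\,2^{a+b+1}$, and then tracks how the single new top exponent $a+b+1$ interacts with the bottom of the next block. You instead run the standard non-adjacent-form algorithm from the least significant position upward, maintaining a bounded carry $h_i\in\{-1,0,1\}$ and proving the pass-through invariant that a carry leaving a position with $d_i=0$ forces $d_{i+1}\ne 0$; your case check is exactly the familiar ``look at $n_i\bmod 4$'' rule for NAF. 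Both arguments are short. The block picture makes the ``one new exponent per block, sitting one above its top'' phenomenon immediately visible, while your digit-by-digit argument mirrors how the compact form is actually computed and does not require the preliminary removal of superfluous pairs.

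On the ``furthermore'' clause you are in fact more careful than the paper, whose argument is only the word ``similarly''. You correctly isolate the one genuine subtlety: when $N(\Le,\Lq)$ is even the compact form of $N(\Le,\Lq)+1$ must contain the exponent $0$, and this need not lie in $\Lq\cup(\Lq+1)$ (take $\Lq=\{2\}$, so $N=4$ and $N+1=2^2+2^0$). Thus the furthermore clause, read literally, fails for even $N$ with $0\notin\Lq$; your conclusion $\Lp'\subseteq\Lq\cup(\Lq+1)\cup\{0\}$, together with the non-adjacency observation ruling out a spurious exponent at position $1$, is the correct general statement. This is harmless for the paper's sole use of the clause in Theorem~\ref{th:normal_circuit}, since in a reduced power circuit the vertex $v^{(1)}$ with $\CE(v^{(1)})=2^0$ is always present by Lemma~\ref{le:order_zero_one}, so the exponent $0$ is always realizable there.
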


\begin{proof}
We may assume that $P(\Le,\Lq)$ does not contain superfluous pairs
because removing superfluous pairs from $P(\Le,\Lq)$ results in a new
binary sum $P(\Le',\Lq')$ where $\Lq' \subseteq \Lq$.
Therefore there exist sequences of positive integers $\{a_i\}$, $\{b_i\}$
and a sequence $\{\varepsilon_i\}$
such that
    $$P(\Le,\Lq) = (\varepsilon_1 2^{a_1}+\ldots +\varepsilon_12^{a_1+b_1}) + \ldots + (\varepsilon_k 2^{a_k}+\ldots +\varepsilon_k2^{a_k+b_k})$$
where $a_i+b_i < a_{i+1}$ and $\varepsilon_i = \pm 1$.
Making the sum in the first brackets compact we get
    $$P(\Le,\Lq) = (-\varepsilon_1 2^{a_1}+\varepsilon_1 2^{a_1+b_1+1}) + \ldots + (\varepsilon_k 2^{a_k}+\ldots +\varepsilon_k2^{a_k+b_k}).$$
If $a_1+b_1+1 \le a_2-2$ then we can think that $(-\varepsilon_1 2^{a_1}+\varepsilon_1 2^{a_1+b_1+1})$ is already compact
and consider the next sum. The induction finishes the proof in this case.

Assume that $a_1+b_1+1 = a_2-1$.
If $\varepsilon_1 = -\varepsilon_2$ then $\varepsilon_1 2^{a_1+b_1+1}+\varepsilon_2 2^{a_2}$ is a superfluous pair.
Removing it we obtain
    $$P(\Le,\Lq) = (-\varepsilon_1 2^{a_1}-\varepsilon_1 2^{a_1+b_1+1}) + (\varepsilon_2 2^{a_2+1}+\ldots +\varepsilon_22^{a_2+b_2}) + \ldots$$
and as above the sum $(-\varepsilon_1 2^{a_1}-\varepsilon_1 2^{a_1+b_1+1})$ is compact and we can consider the next sum.
If $\varepsilon_1 = \varepsilon_2$ then the power $\varepsilon_1 2^{a_1+b_1+1}$
is being added to the second sum. Induction finishes the proof in this case.

Observe that in each case either we do not introduce a new power of $2$ or we stop at $2^{a_1+b_1+1}$.
Therefore, for every $d\in \Lp$ either $d\in \Lq$ or $d-1 \in \Lq$.
In a similar way we can prove the last statement of the lemma.
\end{proof}

\section{Power circuits}
\label{se:power_circuits}

We gave a definition of general algebraic circuits in the language
$\mathcal{L} = \{+,-,\cdot, x\cdot2^y\}$ in the introduction.
In this section we define a special type of circuits,
called {\em power circuits}. Power circuits are main technical objects of the paper.
They can be viewed as versions of the algebraic circuits of a special kind.
We show in due course that every algebraic circuit in $\CL$ is equivalent in
the structure $\tilde Z = \langle \mathbb{Z}, +,-,\cdot, x\cdot2^y\rangle$
to a power circuit, but power circuits are much easier to work with.
Besides, power circuits give a very compact presentation of natural
numbers, designed specifically for
efficient computations with exponential polynomials.

\subsection{Power circuits and terms}

Let $\CP = (V(\CP),E(\CP))$ be a directed graph. For an edge $e =
v_1 \rightarrow v_2 \in E(\CP)$ we denote by $\alpha(e)$ its {\em
origin} $v_1$ and by $\beta(e)$ its {\em terminus} $v_2$. We say
that $\CP$ contains {\em multiple edges} if there are two distinct
edges $e_1$ and $e_2$ in $\CP$ such that $\alpha(e_1) =
\alpha(e_2)$ and $\beta(e_1) = \beta(e_2)$. For a vertex $v$ in
$\CP$ denote by $Out_v$ the set of all edges with the origin $v$
and by $In_v$ the set of all edges with the terminus $v$. A vertex $v$ with
$Out_v = \emptyset$  is called a {\em leaf} or a {\em gate}; $Leaf(\CP)$ is the set of leaves in $\CP$.

A {\em power circuit} is a tuple  $(\CP, \mu, M, \nu, \gamma)$ where:
\begin{enumerate}
    \item[$\bullet$]
$\CP = (V(\CP),E(\CP))$ is a non-empty directed acyclic graph  with no multiple edges;
    \item[$\bullet$]
$\mu:E(\CP) \rightarrow  \{ 1,-1\}$ is called the  {\em
edge labeling function};
    \item[$\bullet$]
$M \subseteq V(\CP)$ is a non-empty subset of vertices called
{\em the marked vertices};
    \item[$\bullet$]
 $\nu:M \rightarrow \{-1,1\}$ is called a {\em  sign
function}.
\item[$\bullet$]  $\gamma: Leaf(\CP) \to X \cup\{0\}$ is a function which assigns to each leaf in $\CP$ either a variable from a set of variables $X$ or the constant   $0$.
\end{enumerate}
For simplicity we often omit $\mu, M, \nu, \gamma$ from notation and refer to the power circuit above as $\CP$.

For a power circuit $\CP$ we  define a term $t_v$ in the language $\CL$  for each  vertex $v \in V(\CP)$,  by induction starting at leaves (which exists since $\CP$ is acyclic):
$$
t_v = \left\{
\begin{array}{ll}
\gamma(v) & \mbox{if } v \in Leaf(\CP); \\
2^{\sum_{e\in Out_v} \mu(e)t_v(\beta(e))} & \mbox{otherwise.}\\
\end{array}
\right.
$$
where the sum $\sum_{e\in Out_v}$ denotes  composition of additions in some fixed order on terms in $\CL$.
Finally, define a term
    $$\CT_\CP = \sum_{v\in M} \nu(v) t_v.$$

The number $|\CP| = |V(\CP)| + |E(\CP)|$ is called the {\em size} of a power circuit $\CP$.  Two circuits
$\CP_1$ and $\CP_2$  are  {\em equivalent} (symbolically $\CP_1 \sim \CP_2$) if the
terms $\CT_{\CP_1}$ and $\CT_{\CP_2}$ induce the same function in $\tilde Z$.
Notice, that these functions could be partial (not everywhere defined) on $\tilde Z$.

\subsection{Term evaluation and constant circuits}

A power circuit $\CP = (\CP, \mu, M, \nu, \gamma)$ is called {\em constant}
if the function $\gamma$ assigns no variables (i.e., $\gamma\equiv 0$).
In this case every term $t_v$ ($v \in V(\CP)$),
represents a real number which we denote by $\CE(v)$. The real represented
by  $\CT_\CP$ is denoted by $\CE(\CP)$. We say that $\CP$
{\em properly represents} an integer  number $N$ if $N = \CE(\CP)$
and $\CE(v) \in \mathbb{N}$ for every
$v \in V(\CP)$. In this case we write  $N = \CE(\CP)$.
Notice that the term $\CT_\CP(X)$ is defined in $\tilde Z$
for an assignment of variables $\eta : X \to \mathbb{Z}$
if and only $\CP$ properly  represent an integer $N(\CP)$.
Similarly, we say that $\CP$ properly represents a natural number if $\CE(\CP) \in\MN$ and
$\CE(v) \in \mathbb{N}$ for every $v \in V(\CP)$.
Observe, that  two constant circuits $\CP_1$ and $\CP_2$ are {\em equivalent}
if $\CE(\CP_1) = \CE(\CP_2)$.
For constant circuits we omit the function $\gamma$ from notation.
Equivalent power circuits $\CP_1$ and $\CP_2$ are {\em strongly equivalent} if $\CP_1$ is proper if and only if $\CP_2$ is proper.

\begin{lemma} \label{le:p-circuit-number}
For $n\in\MN$ one construct a power circuit $\CP$ properly representing  $n$
in time $O(\log^2_2n)$.
\end{lemma}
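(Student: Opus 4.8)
\emph{The plan} is to build, inside a single constant power circuit, one vertex $v_j$ of value $2^j$ for every $j$ with $0\le j\le L:=\lfloor\log_2 n\rfloor$, and then to pick the marked vertices according to the binary expansion of $n$. The key observation is that the exponent needed to produce $2^j$ is $j$ itself, and $j$ is in turn a sum of powers $2^p$ with $p\le\lfloor\log_2 j\rfloor<j$; hence $2^j$ can be manufactured by pointing the new vertex at the already-constructed vertices $v_p$. This self-reference is what keeps the circuit acyclic and guarantees that every exponent vertex I need is available by the time I need it.

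Concretely, I first introduce a single leaf $\ell$, so that $\CE(\ell)=\gamma(\ell)=0$, together with a vertex $v_0$ carrying one edge $v_0\to\ell$ labelled $\mu=1$; then $t_{v_0}=2^{\CE(\ell)}=2^0$, i.e. $\CE(v_0)=1$. Proceeding by induction on $j=1,\dots,L$, I compute the standard binary expansion $j=\sum_{p\in S_j}2^p$ (Lemma~\ref{le:unique_positive_bin_sum}), where $S_j\subseteq\{0,\dots,\lfloor\log_2 j\rfloor\}$, and add to $v_j$ one edge $v_j\to v_p$ with $\mu=1$ for each $p\in S_j$. Since every $p\in S_j$ satisfies $p\le\lfloor\log_2 j\rfloor<j$, all targets $v_p$ already exist, and all edge labels being $1$ gives
\[
\CE(v_j)=2^{\sum_{p\in S_j}\CE(v_p)}=2^{\sum_{p\in S_j}2^{p}}=2^{j}.
\]
Finally I compute the expansion $n=\sum_{q\in T}2^q$ with $T\subseteq\{0,\dots,L\}$, set $M=\{v_q:q\in T\}$ and $\nu\equiv 1$; then $\CT_\CP=\sum_{q\in T}\CE(v_q)=\sum_{q\in T}2^q=n$. (The case $n=0$ is handled separately by a single marked leaf.)

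It remains to verify that this is a legitimate power circuit that \emph{properly} represents $n$, and to bound the running time. Every edge runs from $v_j$ (or $v_0$) to a vertex of strictly smaller index, or to $\ell$, so the graph is acyclic; within each binary expansion a power $2^p$ occurs at most once, so there are no multiple edges. Every vertex value is either $0$ (at $\ell$) or a power of $2$, hence lies in $\MN$, and $\CE(\CP)=n\in\MN$, so the representation is proper. For the complexity, there are $L+2=O(\log_2 n)$ vertices, and constructing the outgoing edges of a given $v_j$ — computing the binary expansion of a number $\le L$ and looking up the targets $v_p$ in an array indexed by $p$ — costs at most $O(\log_2 n)$; summing over all vertices yields $O(\log_2^2 n)$, which dominates the $O(\log_2 n)$ cost of forming $M$.

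The construction is elementary, so I do not expect a genuine obstacle; the one point that must be gotten right is the self-referential use of the vertices $v_p$ as exponents of $v_j$, i.e. checking that the bit positions of $j$ are all strictly below $j$, which is precisely what simultaneously secures acyclicity and the availability of the needed exponent vertices. A sharper count (each $v_j$ has only $O(\log_2\log_2 n)$ edges) would even give the better bound $O(\log_2 n\,\log_2\log_2 n)$, but the crude estimate already suffices for the claimed $O(\log_2^2 n)$.
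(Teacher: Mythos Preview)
Your proof is correct, but it takes a different route from the paper's own argument for this lemma. The paper proceeds by a straightforward recursion on $n$: write $n=\sum\varepsilon_i2^{q_i}$ with $q_i\le\log_2 n$, invoke the induction hypothesis to build a \emph{separate} power circuit $C_i$ for each exponent $q_i$ (each costing $O(\log_2^2\log_2 n)$), then attach a fresh vertex above each $C_i$ and mark it. No structure is shared between the $C_i$, and the time bound follows from $\log_2 n\cdot\log_2^2\log_2 n=O(\log_2^2 n)$.

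Your construction instead builds one shared scaffold of vertices $v_0,\dots,v_L$ with $\CE(v_j)=2^j$, reusing the lower $v_p$'s as exponent targets for every higher $v_j$. This is exactly the construction the paper deploys later in Theorem~\ref{th:normal_cp_number} to produce the \emph{normal} circuit $\CP_n$, where it extracts the sharper $O(\log_2 n\,\log_2\log_2 n)$ bound you also noticed. So what you gain is a smaller circuit (at most $L+2$ vertices versus the paper's potentially $\Theta(\log_2 n\,\log_2\log_2 n)$ when the disjoint $C_i$'s are taken together) and a tighter time estimate; what the paper's inductive version buys is brevity, since it needs no explicit description of the edges and no acyclicity check.
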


\begin{proof}
Induction on $n$.
By Lemma \ref{le:unique_positive_bin_sum} for a given number $n$ one can find
in  time $O(\log_2n)$ the a reduced binary sum
$\varepsilon_1 2^{q_1} + \ldots +  \varepsilon_k 2^{q_k}$ representing $n$,
where $k, q_1, \ldots, q_k \leq \log_2n$.
By induction, one can construct power circuits $C_1, \ldots, C_k$ representing the numbers $q_1, \ldots,q_k$.
It takes time $O(\log_2^2(\log_2n))$ time to construct each circuit $C_i$.
So altogether it takes at most $O(\log_2n\log_2^2(\log_2n))$ time.
Given power circuits $C_1, \ldots, C_k$ it requires additional time $O(\log_2n)$
to a construct a power circuit representing $n$.
The time estimate follows from the obvious  observation
$log_2n\log_2^2(\log_2n) = O(\log_2^2n).$
\end{proof}

See Figure \ref{fi:pp_examples} for examples of (constant) power circuits. In
figures we denote unmarked vertices by white circles and marked
vertices by black circles. Each edge and marked vertex is labelled
with the plus or minus sign denoting $1$ or $-1$ respectively.

\begin{figure}[htbp]
\centerline{ \includegraphics[scale=0.5]{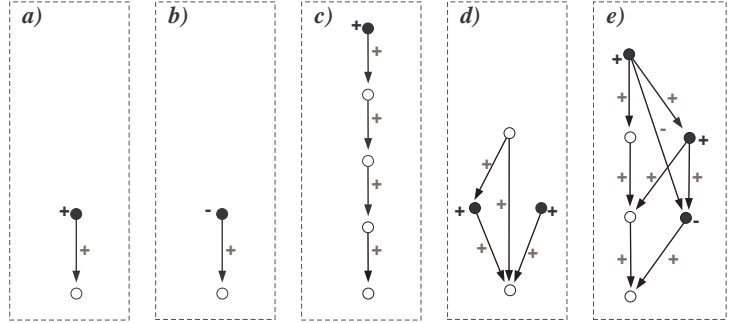} }
\caption{\label{fi:pp_examples} Examples of power circuits
representing integers $1$, $-1$, $16$, $2$, and $35$.}
\end{figure}

Let $\CP$ be a power circuit, $\CT_\CP  = \CT_\CP(X)$, and $\eta:X \to \mathbb{Z}$  an
assignment of variables in $X$. The following lemma allows one to
operate with the value $\CT_\CP(\eta(X))$ by means of constant power circuits.

\begin{lemma}\label{le:substitution}
Let $\CP$ be a power circuit, $\CT_\CP  = \CT_\CP(X)$,
and $\eta:X \to \mathbb{Z}$  an  assignment of
variables. Then one can construct a constant power circuit $\CP'$
representing the number $\CT_\CP(\eta(X))$ in time
$O(|\CP|+\log_2^2(size(\eta)))$, where $size(\eta) = \sum_{x \in X} \log_2 (\eta(x))$.
Moreover, the value  $\CT_\CP(\eta(X))$ is defined in $\tilde Z$
if and only if the circuit $\CP'$  properly represents $\CT_\CP(\eta(X))$.
\end{lemma}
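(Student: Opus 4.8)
Lemma \ref{le:substitution} — Proof proposal.

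The plan is to reduce the substitution to the constant case by replacing each variable leaf with a small constant subcircuit, and then to account for the cost and the ``properness'' bookkeeping separately.

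First I would handle the construction. The circuit $\CP$ has a collection of leaves labeled by variables via $\gamma$. For each variable $x \in X$ occurring as a label, the assignment gives an integer $\eta(x) \in \MZ$; by Lemma \ref{le:p-circuit-number} (applied to $|\eta(x)|$, with the sign recorded in the marked-vertex sign function) one can build a constant power circuit $\CP_x$ properly representing $\eta(x)$. The idea is to form $\CP'$ by taking $\CP$, deleting the variable labels, and for each variable leaf $v$ with $\gamma(v)=x$ splicing in the circuit $\CP_x$ — i.e., identifying the leaf $v$ with the output/marked structure of $\CP_x$ so that $t_v$ now evaluates to $\eta(x)$. Concretely, since a leaf contributes its $\gamma$-value directly to the sums $\sum_{e\in Out_v}\mu(e)t_{\beta(e)}$ of its in-neighbors, I would redirect those incoming edges to a single new vertex that outputs $\eta(x)$ via the attached copy of $\CP_x$. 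After this splicing $\CP'$ is a constant power circuit, and by induction on the DAG structure every term $t_v$ of $\CP'$ agrees with the value of the corresponding term of $\CP$ under $\eta$; hence $\CE(\CP') = \CT_\CP(\eta(X))$ as a real number. This gives $\CP'$ representing $\CT_\CP(\eta(X))$.

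Next I would verify the complexity bound. Copying $\CP$ and rewiring the variable leaves costs $O(|\CP|)$. Building the constant subcircuits $\CP_x$ costs, by Lemma \ref{le:p-circuit-number}, time $O(\log_2^2 \eta(x))$ for each $x$; summing over $X$ and using $\sum_x \log_2^2(\eta(x)) \le \bigl(\sum_x \log_2(\eta(x))\bigr)^2 = \log_2^2(size(\eta))$ gives the aggregate cost $O(\log_2^2(size(\eta)))$. Combining the two contributions yields the claimed $O(|\CP|+\log_2^2(size(\eta)))$. (One must only note that there are at most $|\CP|$ distinct variable leaves, so the splicing overhead does not dominate.)

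Finally I would address the ``moreover'' clause about definedness, which I expect to be the main subtle point. The issue is that the operations of $\tilde Z$ are partial: $x\cdot 2^y$ is defined in $\MZ$ only when the exponent is a natural number, so a term is defined under $\eta$ exactly when every intermediate power $t_v$ evaluates into $\MN$. This is precisely the definition of $\CP'$ \emph{properly} representing its value: $\CE(v)\in\MN$ for all $v\in V(\CP')$, together with $\CE(\CP')$ being an integer. The substituted subcircuits $\CP_x$ are constructed to be proper (their internal vertices output naturals, being built from reduced binary sums as in Lemma \ref{le:p-circuit-number}), so the only vertices whose values can fail to lie in $\MN$ are the original gate vertices of $\CP$; and for those, $\CE(v)\in\MN$ holds if and only if the corresponding exponentiation was legitimate in $\tilde Z$ under $\eta$. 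Thus $\CT_\CP(\eta(X))$ is defined in $\tilde Z$ if and only if every $\CE(v)\in\MN$, i.e. if and only if $\CP'$ properly represents $\CT_\CP(\eta(X))$, which is exactly the asserted equivalence. The care needed is just to confirm that the definedness recursion for terms in $\tilde Z$ matches vertex-by-vertex the properness recursion for power circuits, which follows directly from the inductive definition of $t_v$.
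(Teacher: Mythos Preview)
The paper states this lemma without proof, so there is nothing to compare your argument against line by line; your construction is the natural one and is essentially what the authors must have in mind. Two points deserve tightening, however.

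First, the splicing description is not quite right as written. You say you would ``redirect those incoming edges to a single new vertex that outputs $\eta(x)$ via the attached copy of $\CP_x$.'' In a power circuit every non-leaf vertex has value $2^{(\cdot)}$, so no single vertex can carry an arbitrary integer $\eta(x)$. What actually has to happen is that each edge $w\stackrel{\mu(e)}{\longrightarrow} v$ into the variable leaf $v$ is replaced by a bundle of edges $w\stackrel{\mu(e)\nu_x(u)}{\longrightarrow} u$, one for each marked vertex $u$ of $\CP_x$; and if $v$ itself was marked, the marked set and sign function of $\CP_x$ are merged into those of $\CP'$ with the sign $\nu(v)$ folded in. This is routine, but it matters for the size and time accounting.

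Second, your complexity line contains a genuine slip. You write
\[
\sum_{x}\log_2^2\eta(x)\ \le\ \Bigl(\sum_{x}\log_2\eta(x)\Bigr)^{2}\ =\ \log_2^2\bigl(size(\eta)\bigr),
\]
but with $size(\eta)=\sum_x\log_2\eta(x)$ the right-hand side is $\bigl(\log_2 size(\eta)\bigr)^2$, not $\bigl(size(\eta)\bigr)^2$; your last equality is false. The honest bound your construction gives is $O\bigl(|\CP|+ size(\eta)^2\bigr)$ (or, if you also track the edge-fan-out from the splicing above, something like $O\bigl(|\CP|\cdot size(\eta)\bigr)$). The expression $\log_2^2(size(\eta))$ in the paper's statement appears to be a typo for a polynomial in $size(\eta)$; you should flag that rather than manufacture a false identity to match it. The ``moreover'' clause you handle correctly: definedness in $\tilde Z$ is exactly the vertexwise condition $\CE(v)\in\MN$, and the inserted $\CP_x$ are proper by construction via Lemma~\ref{le:p-circuit-number}.
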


\section{Standard, reduced and normal power circuits}
\label{se:circuit_types}

In this section we define several important types of circuits: standard, reduced and normal. The standard ones can be  easily obtained from general power circuits through some obvious simplifications. The reduced power circuits output numbers only, they require much stronger rigidity conditions (no redundant or superfluous pairs of edges, distinct vertices output distinct numbers), which are much harder to achieve. The normal power circuits are reduced and output numbers in the compact binary forms. They give a unique compact  presentation of integers, which is much more compressed (in the worst case) than the canonical binary representations. This is the main construction of the paper, it is interesting in its own right.

\subsection{Standard circuits}
\label{se:standard-circuits}

We say that a vertex $v$ is a {\em zero vertex} in a circuit $\CP$ if  $v \in Leaf(\CP)$ and $\gamma(v) = 0$. If $\CP$ is a constant circuit then  $v$  is a zero vertex if and only if $\CE(v) = 0$.
The following lemma is obvious.

\begin{lemma}\label{le:zero_exists}
A constant power circuit contains at least one zero vertex.
\end{lemma}

Let $\CP$ be a constant power circuit. Below we describe some obvious {\em rewriting rules} that allow one to simplify  $\CP$ (if applicable) keeping the strong equivalence.

\medskip
\noindent {\bf Trivializing:} Notice that  if every marked vertex in $\CP$ is a zero vertex then $\CE(\CP) = 0$. In this event we replace $\CP$ by a strongly equivalent circuit $\CP'$ consisting of a single marked vertex $v$.

From now on we assume that $\CP$ has a non-zero marked vertex.

\medskip
\noindent {\bf Unmark a zero:}
\label{le:marked_zero}
Let  $v$ be a marked zero vertex in $\CP$. If  $\CP'$ is obtained from $\CP$ by making $v$ unmarked then
$\CP$ and $\CP'$ are strongly equivalent.

\medskip
\noindent {\bf Fold two zeros:}
\label{le:many_zeros}
Let  $v_1,v_2\in V(\CP)$ be two zeros in $\CP$.
 If $\CP'$ is  obtained from $\CP$ by
folding $v_1$ and $v_2$ then  $\CP$ and $\CP'$ are strongly equivalent.

\medskip
\noindent {\bf Remove redundant ``zero edges'':}
 \label{le:zero_edges}
Let $z$ be a zero vertex in $\CP$, $e =
v\rightarrow z \in E(\CP)$, and $|Out_{v}|>1$. If $\CP'$ is
obtained from $\CP$ by removing the edge $e$ then  $\CP$ and $\CP'$ are strongly equivalent.

\begin{figure}[htbp] \centerline{
\includegraphics[scale=0.5]{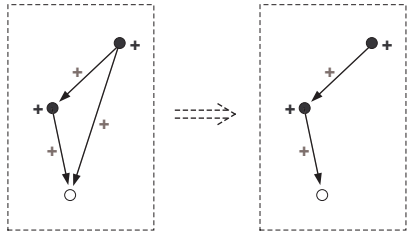} }
\caption{\label{fi:redundantzeroedge} Removing redundant zero edges.}
\end{figure}

\label{se:trimmed_reduced_graph_pres}

A power circuit $\CP$ is {\em trimmed} if for  each vertex $v \in \CP$ there ia a directed path from a marked
vertex to $v$. The following rewriting rule allows one to trim circuits.

\medskip
\noindent
{\bf Trimming:} Let $v$ be an unmarked vertex $v \in \CP$ with $In_v =
\emptyset$. If $\CP'$ is obtained from $\CP$ by removing
the vertex $v$ and  all the adjacent edges then $\CP'$ is strongly equivalent to $\CP$.

\begin{figure}[htbp]
\centerline{
\includegraphics[scale=0.5]{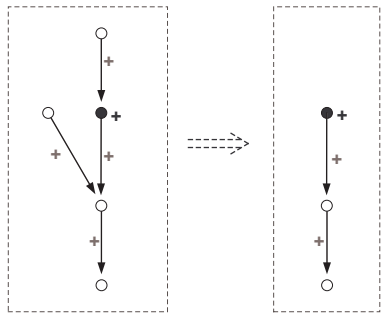} }
\caption{\label{fi:trim_examples} Example of trimming.}
\end{figure}

\begin{definition}
A trimmed power circuit $\CP$ is in the {\em standard} form if it contains a unique
unmarked zero vertex and contains no redundant zero edges.
\end{definition}

\begin{algorithm} \label{al:remove_redundancies}({\em Standard power circuit})
\label{al:trim}
     \\{\sc Input.}
A circuit $\CP$.
    \\{\sc Output.}
A strongly equivalent circuit $\CP'$ in a standard form.
    \\{\sc Computations.}
\begin{enumerate}
    \item[(1)]
Compute the set $Leaf(\CP)$.
    \item[(2)]
Fold all zero vertices in $Leaf(\CP)$ into one vertex $z$ and make it unmarked.
    \item[(3)]
Erase all redundant edges incoming into $z$.
    \item[(4)]
Trim the circuit.
    \item [(5)]
Return the result $\CP'$.
\end{enumerate}
\end{algorithm}

Summarizing the argument above one has the following result.
\begin{proposition} \label{pr:red_zeros_equivalence} \label{pr:trimming}
Let $\CP'$ be produced from $\CP$ by Algorithm \ref{al:remove_redundancies}. Then
\begin{itemize}
    \item
$\CP'$ is standard and is strongly equivalent to $\CP$.
    \item
$|V(\CP)|\leq |V(\CP')|$ and $|E(\CP)|\leq |E(\CP')|$.
    \item
it takes time $O(|\CP|)$ to construct $\CP'$.
\end{itemize}
\end{proposition}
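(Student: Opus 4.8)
The plan is to verify the three bulleted claims of Proposition~\ref{pr:red_zeros_equivalence} by tracking what Algorithm~\ref{al:remove_redundancies} does at each of its five steps, invoking the four rewriting rules established earlier (Trivializing, Unmark a zero, Fold two zeros, Remove redundant zero edges, and Trimming), each of which was shown to preserve strong equivalence. First I would address \emph{correctness}: the algorithm is just a scheduled composition of these individual rules. Step~(2) folds all leaves that are zero vertices into a single vertex $z$ (repeated application of \textbf{Fold two zeros}) and unmarks it (\textbf{Unmark a zero}); by Lemma~\ref{le:zero_exists} at least one zero vertex exists, so $z$ is well-defined. Step~(3) deletes every redundant edge into $z$ (\textbf{Remove redundant ``zero edges''}), and Step~(4) trims (\textbf{Trimming}). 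Since each rule preserves strong equivalence and strong equivalence is transitive, $\CP' \sim \CP$ strongly. To see that $\CP'$ is in standard form I must check the definition: after Step~(2) there is a single zero leaf $z$, and nothing downstream recreates another; after Step~(3) there are no redundant zero edges; after Step~(4) the circuit is trimmed. I would note that trimming in Step~(4) cannot destroy the standardness achieved in Steps~(2)--(3), since removing unreachable unmarked vertices neither creates new zero vertices nor new redundant edges.

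Next I would handle the \emph{size monotonicity} claim $|V(\CP)|\le|V(\CP')|$ and $|E(\CP)|\le|E(\CP')|$. Here I suspect there is a typo in the statement and the intended inequalities run the other way, namely $|V(\CP')|\le|V(\CP)|$ and $|E(\CP')|\le|E(\CP)|$, since every step of the algorithm only folds vertices, deletes edges, or trims, and none of these operations increases the vertex or edge count. Assuming the inequality is meant as ``the output is no larger than the input,'' the argument is immediate: folding (Step~2) can only decrease $|V|$, edge removal (Step~3) decreases $|E|$, and trimming (Step~4) decreases both; no step adds vertices or edges. I would state this monotonicity step by step, matching each operation to the quantity it cannot increase.

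Finally, for the \emph{time bound} $O(|\CP|)$, I would argue that each step is linear in $|\CP| = |V(\CP)| + |E(\CP)|$. Computing $Leaf(\CP)$ (Step~1) is a single pass recording which vertices have empty $Out_v$, hence $O(|\CP|)$. Folding all zero leaves (Step~2) and unmarking is linear, since it amounts to redirecting the incoming edges of each zero leaf to a common representative $z$ and can be done in one pass over the edge set. Erasing redundant edges into $z$ (Step~3) requires, for each vertex $v$ with an edge to $z$, checking whether $|Out_v|>1$; these out-degrees can be precomputed in one pass, so the whole step is $O(|\CP|)$. Trimming (Step~4) is a reachability computation from the marked vertices, which is a linear-time graph traversal (BFS or DFS) over the acyclic graph. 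Summing the constantly-many linear passes gives $O(|\CP|)$.

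I do not expect any genuine obstacle here, as this proposition is a bookkeeping summary of machinery already built; the only subtlety worth stating carefully is the interaction between Steps~(3) and~(4)—that trimming does not reintroduce redundant edges or a second zero vertex—and the likely sign/direction issue in the size inequality, which I would resolve in favor of the output being no larger than the input.
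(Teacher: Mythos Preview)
Your proposal is correct and matches the paper's approach exactly: the paper does not give a separate proof but simply writes ``Summarizing the argument above one has the following result,'' relying on the rewriting rules (Unmark a zero, Fold two zeros, Remove redundant zero edges, Trimming) established just before the proposition. Your step-by-step tracking of Algorithm~\ref{al:remove_redundancies} is precisely this summary spelled out, and your observation that the size inequalities are stated with the wrong direction is correct---each operation can only shrink the circuit, so the intended bounds are $|V(\CP')|\le|V(\CP)|$ and $|E(\CP')|\le|E(\CP)|$.
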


There is one more procedure that is useful for operations over
power circuits (see Sections \ref{se:multiplication} and \ref{se:multiplication_power}).
Recall that a vertex $v$ in $\CP$ is a {\em source}
if $In_v = \emptyset$. The following algorithm converts a circuit
into an equivalent one where each marked vertex is a source.

\begin{algorithm}\label{al:marked_origins}\
    \\{\sc Input.}
A circuit $\CP = (\CP,M,\mu,\nu)$.
    \\{\sc Output.}
An  equivalent circuit $\CP'$ in which every marked vertex is a source.
    \\{\sc Computations:}
\begin{enumerate}
\item[A.] For each vertex $v \in M$ with  $In_v
\ne \emptyset$ do:
\begin{enumerate}
    \item[(1)]
introduce a new vertex $v'$;
    \item[(2)]
for each edge $v \stackrel{\varepsilon}{\rightarrow} u$ introduce
a new  edge $v' \stackrel{\varepsilon}{\rightarrow} u$;
    \item[(3)]
replace $v$ with $v'$ in $M$ and put $\nu(v') = \nu(v)$.
\end{enumerate}
    \item[B)]
Output the obtained circuit.
\end{enumerate}
\end{algorithm}

\begin{lemma} \label{le:source}
Let $\CP'$ be produced by Algorithm \ref{al:marked_origins} from $\CP$. Then:
 \begin{itemize}
 \item $\CP$ is strongly equivalent to $\CP'$,
 \item $|V(\CP')| \le 2|V(\CP)|$, and $|E(\CP')| \le 2|E(\CP)|$.
 \item Algorithm \ref{al:marked_origins} has linear time complexity
$O(|\CP|)$.
\end{itemize}
\end{lemma}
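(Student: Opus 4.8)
The plan is to exploit the single structural fact that drives Algorithm \ref{al:marked_origins}: the fresh vertex $v'$ created for a marked vertex $v$ is an exact duplicate of the \emph{outgoing} structure of $v$. By construction, for each edge $v \stackrel{\varepsilon}{\to} u$ of $\CP$ the vertex $v'$ has a parallel edge $v' \stackrel{\varepsilon}{\to} u$ with the same label and the same terminus, and $v'$ has no incoming edges. Since the term attached to a vertex depends only on its outgoing edges, there is a label- and terminus-preserving bijection between $Out_{v'}$ and $Out_v$, whence
$$t_{v'} = 2^{\sum_{e \in Out_{v'}} \mu(e) t_{\beta(e)}} = 2^{\sum_{e \in Out_{v}} \mu(e) t_{\beta(e)}} = t_v,$$
and in particular $\CE(v') = \CE(v)$ in the constant case. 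First I would record this identity; it is the crux of everything else.

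For equivalence I would argue one iteration at a time and then induct. Processing $v$ only adds a new source $v'$ together with edges emanating from $v'$; it never alters the outgoing edges of any preexisting vertex, and acyclicity is preserved precisely because $v'$ is a source. Hence every old term $t_w$ with $w \in V(\CP)$ is unchanged, and the only change to the marked set is the swap of $v$ for $v'$ with $\nu(v') = \nu(v)$. Using $t_{v'} = t_v$ this yields
$$\CT_{\CP'} = \sum_{w \in M \setminus \{v\}} \nu(w) t_w + \nu(v') t_{v'} = \sum_{w \in M} \nu(w) t_w = \CT_{\CP},$$
so the value is preserved; iterating over all marked vertices with $In_v \ne \emptyset$ gives $\CT_{\CP'} = \CT_{\CP}$, i.e.\ $\CP \sim \CP'$. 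For the \emph{strong} part I would note $V(\CP) \subseteq V(\CP')$ with every old value unchanged, and that each new vertex satisfies $\CE(v') = \CE(v) \in \MN$ exactly when $\CE(v) \in \MN$; therefore every vertex of $\CP'$ outputs a natural number iff every vertex of $\CP$ does, so $\CP$ is proper iff $\CP'$ is proper.

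For the size bounds I would just count what is added. At most one new vertex is created per marked vertex, and $M \subseteq V(\CP)$ gives $|M| \le |V(\CP)|$, so $|V(\CP')| \le 2|V(\CP)|$. The edges added while processing $v$ number exactly $|Out_v|$, and summing only over a subset of vertices,
$$|E(\CP') \setminus E(\CP)| \le \sum_{v \in M} |Out_v| \le \sum_{v \in V(\CP)} |Out_v| = |E(\CP)|,$$
whence $|E(\CP')| \le 2|E(\CP)|$. Finally, for the time bound, computing all sets $In_v$ costs $O(|E(\CP)|)$, and thereafter each marked vertex is handled in time $O(1 + |Out_v|)$; summing gives $O(|V(\CP)| + |E(\CP)|) = O(|\CP|)$. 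I do not expect a genuine obstacle here; the only point demanding care is the equivalence step, where one must be explicit that duplicating a vertex as a source keeps the graph acyclic and leaves every preexisting term untouched, so that the value identity propagates correctly through the induction.
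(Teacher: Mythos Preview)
Your argument is correct. The paper actually states this lemma without proof (it is left as an immediate consequence of the description of Algorithm~\ref{al:marked_origins}), so there is nothing to compare against; your write-up supplies exactly the straightforward verification the authors omit. The key observation you isolate---that the fresh vertex $v'$ duplicates the outgoing structure of $v$ and is a source, so $t_{v'}=t_v$ while all preexisting terms are untouched---is precisely what makes the lemma obvious, and your counting for the size and time bounds is clean.
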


\begin{figure}[htbp]
\centerline{
\includegraphics[scale=0.5]{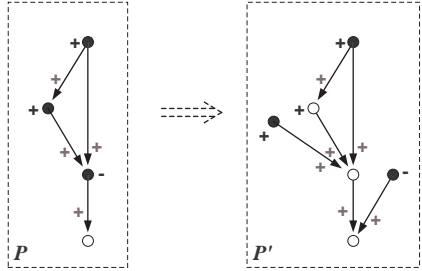} }
\caption{\label{fi:marked_vert_separation} Processing of marked vertices that are not sources.}
\end{figure}

\subsection{Reduced power circuits}

Let $\CP$ be a constant power circuit in the standard form.
A pair of  edges  $e_1 = v\rightarrow v_1$ and  $e_2 =
v\rightarrow v_2$  with the same origin $v$ is called a {\em redundant } pair in $\CP$  if $\mu(e_1) = -\mu(e_2)$ and
$\CE(v_1) = \CE(v_2)$.

\medskip
\noindent
{\bf Removing redundant edges:}
Let $e_1$ and $e_2$ be a redundant pair of edges in $\CP$. If $\CP'$ is
obtained from $\CP$ by removing the pair $e_1, e_2$ then $\CP'$ is
equivalent to $\CP$. Moreover, if $\CP$ properly represents an integer
then $\CP'$ properly represents the same integer.

 A pair of edges  $e_1 = v\rightarrow v_1$ and  $e_2 =
v\rightarrow v_2$ as above is termed {\em superfluous} if
$\mu(e_1) = -\mu(e_2)$ and $\CE(v_1) = 2\CE(v_2)$.

\medskip
\noindent
{\bf Removing superfluous edges:}
Let $(e_1,e_2)$ be a pair of superfluous edges in $\CP$. If  $\CP'$ is
obtained from $\CP$ by removing  the edge $e_1$ from $\CP$ and changing
$\mu(e_2)$ to $-\mu(e_2)$ then $\CP'$ is strongly equivalent to $\CP$.

\medskip
\noindent
{\bf Remark.}  If one knows what pairs of edges are redundant or
superfluous in $\CP$ then it takes time $O(|\CP|)$ to remove
them (applying the rules above).  However, it is not obvious how to
check efficiently if $\CE(v_1) = \CE(v_2)$ or $\CE(v_1) = 2\CE(v_2)$.
We take care of this in due course.

\begin{figure}[htbp] \centerline{
\includegraphics[scale=0.5]{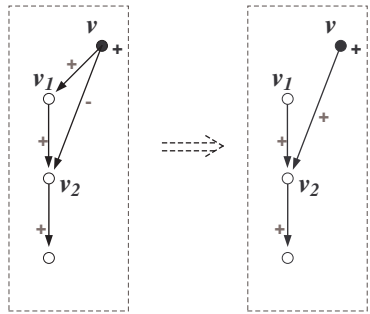} }
\caption{\label{fi:lemma49} Removing superfluous edges. Here, $\CE(v_1) = 2\CE(v_2) =
2$.}
\end{figure}

\begin{definition}
A circuit $\CP$ is {\em reduced} if
\begin{enumerate}
    \item[(R1)]
$\CP$ is in the standard form.
    \item[(R2)]
For any $v_1, v_2 \in V(\CP)$, $\CE(v_1)=\CE(v_2)$ if and only if
$v_1=v_2$.
    \item[(R3)]
$\CP$ contains no redundant or superfluous edges.
\end{enumerate}
\end{definition}

\begin{proposition}\label{pr:circuit_compare}
Let $\CP$ be a reduced circuit. Then
\begin{enumerate}
 \item[1)] $\CE(\CP)=0$ if and only if $\CP$ is trivial, i.e., $\CP$ consists of a single marked vertex.
 \item[2)] Let $v \in M$ be such that for any $v' \in M$ $\CE(v) \ge
 \CE(v')$ (the vertex with the maximal $\CE$-value in $M$). Then:
  \begin{itemize}
  \item If $\nu(v)=1$ then $\CE(\CP)>0$.
  \item If $\nu(v)=-1$ then $\CE(\CP)<0$.
 \end{itemize}
\end{enumerate}
\end{proposition}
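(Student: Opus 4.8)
The plan is to exploit the rigidity conditions (R1)--(R3) to reduce the problem about a general reduced circuit to the already-established facts about reduced binary sums (Lemma \ref{le:positive_reduced_binary}). The key observation is that a reduced circuit has \emph{distinct} $\CE$-values at distinct vertices (by (R2)), so the marked set $M$ with its sign function $\nu$ encodes the number $\CE(\CP)$ as a binary sum whose exponents are themselves $\CE$-values of the target vertices, and this sum is forced to be reduced and free of the degeneracies that could cause cancellation.

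\medskip\noindent\textbf{Part 1).} The ``if'' direction is immediate: a single marked vertex is a zero vertex (it has no outgoing edges, so $\CE(v) = 2^{(\text{empty sum})}$ interpreted as $0$ in the trivializing convention), hence $\CE(\CP) = 0$. For the ``only if'' direction, suppose $\CE(\CP) = 0$ but $\CP$ is not trivial. First I would write $\CE(\CP) = \sum_{v \in M} \nu(v)\CE(v)$. By condition (R2) the values $\CE(v)$ for $v \in M$ are pairwise distinct, and each is a non-negative integer (as $\CP$ properly represents a number). The crucial point is that exactly one vertex in the standard circuit is the zero vertex, and by the trivializing/unmark-a-zero conventions, after standardization either $\CP$ is a single marked vertex or no marked vertex is a zero vertex. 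In the non-trivial case, every $\CE(v)$ for $v \in M$ is a \emph{distinct positive} integer. Writing each as a power-of-two expansion, the expression $\sum_{v \in M} \nu(v)\CE(v)$ is, after expanding, a signed binary sum; I would argue that because the $\CE(v)$ are distinct values of the form $2^{(\ldots)}$ at non-leaf vertices (or positive integers generally), the leading term cannot cancel. The cleanest route is to invoke Part 2) directly: if $M$ is nonempty with all values positive, the maximal-value marked vertex dominates and forces $\CE(\CP) \ne 0$. Thus Part 1) follows once Part 2) is proved.

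\medskip\noindent\textbf{Part 2).} Let $v \in M$ realize the maximum of $\CE$ over $M$, with value $m = \CE(v)$. By (R2), $v$ is the \emph{unique} marked vertex attaining this maximum, so every other $v' \in M$ has $\CE(v') \le m - 1$ (distinct non-negative integers). Then I would bound
\begin{equation}
\left| \CE(\CP) - \nu(v)\,\CE(v) \right| = \left| \sum_{v' \in M \setminus \{v\}} \nu(v')\CE(v') \right| \le \sum_{v' \in M \setminus \{v\}} \CE(v').
\end{equation}
The right-hand side is a sum of distinct non-negative integers each at most $m-1$, hence bounded by $(m-1) + (m-2) + \cdots$; but more usefully, since these are distinct non-negative integers strictly less than $m$, their sum is strictly less than $m$ whenever $m$ is a power of two exceeding the rest --- the decisive estimate is that distinct positive integers below $m$ sum to at most $1 + 2 + \cdots + (m-1) < 2^m$, which is too weak directly. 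The sharper argument uses that the $\CE(v')$ are themselves of the special exponential form, so I would instead compare \emph{binary expansions}: the leading bit of $m = \CE(v)$ sits strictly above the leading bits of all smaller $\CE(v')$, and summing distinct values all with strictly smaller leading bit cannot reach $m$. Concretely, $\sum_{v' \ne v}\CE(v') < m$ because the values are distinct integers in $\{0, 1, \ldots, m-1\}$ and the geometric domination of the top power of $2$ in $m$ prevents the tail from catching up. Therefore $\CE(\CP)$ has the same sign as $\nu(v)\CE(v)$, giving $\CE(\CP) > 0$ when $\nu(v) = 1$ and $\CE(\CP) < 0$ when $\nu(v) = -1$.

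\medskip\noindent\textbf{Main obstacle.} The delicate step is the domination estimate $\sum_{v' \ne v}\CE(v') < \CE(v)$. The naive bound over arbitrary distinct integers fails, so the proof must use the structural fact that each $\CE(v)$ is a tower-of-twos value together with distinctness, and I expect to route this through the reduced binary sum machinery: order the marked values decreasingly to view $\CE(\CP)$ as a \emph{reduced} binary-sum-like expression in the values $\CE(v)$, and then apply Lemma \ref{le:positive_reduced_binary}(2)--(3), which pins the sign to that of the top coefficient. Verifying that (R2) and (R3) guarantee the resulting sum is genuinely reduced and superfluous-pair-free --- so that no cancellation collapses the leading term --- is where the rigidity conditions earn their keep, and this is the step I would write out most carefully.
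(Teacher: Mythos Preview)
Your overall strategy---reduce to Lemma~\ref{le:positive_reduced_binary}---is exactly what the paper does, but you are missing the one-line observation that makes it immediate, and your ``domination'' estimate as written does not go through.

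The point you are circling but never state is this: for every non-zero vertex $v$ (in particular, for every marked vertex in a non-trivial reduced circuit) one has
\[
\CE(v) \;=\; 2^{q_v}, \qquad q_v \;=\; \sum_{e\in Out_v}\mu(e)\,\CE(\beta(e)) \in \MZ,
\]
so each $\CE(v)$ is a \emph{single power of $2$}, not merely ``a positive integer'' or ``a tower-of-twos value''. By (R2) the values $\CE(v)$ for $v\in M$ are pairwise distinct, hence the exponents $q_v$ are pairwise distinct, and therefore $\CE(\CP)=\sum_{v\in M}\nu(v)\,2^{q_v}$ is literally a reduced binary sum in the sense of Section~\ref{se:elementary_properties}. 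Now Lemma~\ref{le:triv_binary} parts~1)--3) give both conclusions at once: the sum is zero iff it is empty, and otherwise its sign is $\nu(v)$ for the $v$ with maximal $q_v$, i.e.\ maximal $\CE(v)$. That is the entire proof.

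Your attempted bound $\sum_{v'\ne v}\CE(v') < \CE(v)$ is in fact true once you know the $\CE(v')$ are distinct powers of $2$ below $2^{q_v}$ (their sum is at most $2^{q_v}-1$), but your text treats them as arbitrary distinct integers below $m=\CE(v)$ and then tries $1+2+\cdots+(m-1)$, which you correctly flag as too weak. You also do not need (R3) or any superfluous-pair condition here; (R2) alone supplies distinctness of the exponents, which is all that ``reduced binary sum'' requires.
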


\begin{proof}
If $\CP$ is trivial then clearly $\CE(\CP)=0$. Now, suppose
that $\CP$ is not
trivial. Then $\CE(\CP) = \sum_{v\in M} \nu(v) \CE(v)$, where $\CE(v) =
2^{\sum_{e\in Out_v} \mu(e)\CE(\beta(e))}$. Hence  $\CE(\CP)$
is a reduced binary sum, so by Lemma \ref{le:triv_binary}, it is not equal to $0$. Which proves 1).

The second statement can be proved similarly using Lemma
\ref{le:positive_reduced_binary}.

\end{proof}


\subsection{Normal forms of constant power circuits}

Let $\CP$ be a constant power circuit.
We say that $\CP$ is in the {\em normal form} if
\begin{itemize}
    \item[(N1)]
$\CP$ is proper and reduced.
    \item[(N2)]
For every vertex $v \in V(\CP)$ the binary sum $\sum_{e\in Out_{v}} \mu(e)\CE(\beta(e))$
is compact (after proper enumeration of children of $v$).
    \item[(N3)]
The binary sum $\CE(\CP) = \sum_{v\in M} \nu(v) \CE(v)$ is in the compact form.
\end{itemize}

Power circuits $\CP_1$ and $\CP_2$ are {\em isomorphic} if
there exists a graph isomorphism $\varphi:\CP_1 \rightarrow \CP_2$
mapping $M(\CP_1)$ bijectively onto $M(\CP_2)$ and
preserving the values of $\mu$, $\nu$, an $\gamma$.

\begin{theorem}
Two constant power circuits in the normal form are equivalent if and only if they are isomorphic.
\end{theorem}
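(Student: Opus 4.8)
The statement asserts that the normal form is a complete invariant of the value $\CE(\CP)$: two normal circuits compute the same number iff they are literally isomorphic. One direction is trivial, since isomorphic circuits clearly produce the same term and hence the same value. The plan is to prove the hard direction, namely that $\CE(\CP_1) = \CE(\CP_2)$ forces an isomorphism $\varphi:\CP_1 \to \CP_2$. My strategy is to build $\varphi$ level by level, using the uniqueness of compact binary sums (Lemma \ref{le:compact_sum}) together with the rigidity condition (R2), which says distinct vertices in a reduced circuit carry distinct $\CE$-values.

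First I would introduce a stratification of the vertices by ``depth'': since $\CP$ is a directed acyclic graph, each vertex $v$ has a well-defined longest directed path to a leaf, and I order the vertices by this depth (leaves at depth $0$). By Lemma \ref{le:zero_exists} and the standard-form condition, each circuit has exactly one (unmarked) zero vertex, and since the circuits are constant and reduced, (R2) guarantees that within each circuit the map $v \mapsto \CE(v)$ is injective. The key reduction is therefore to show that the two circuits realize exactly the same finite set of integer values $\{\CE(v) : v \in V(\CP_i)\}$ on their vertices; once that is established, injectivity (R2) gives a canonical bijection $\varphi$ matching equal $\CE$-values, and I must then check that $\varphi$ is a graph isomorphism respecting $\mu$, $\nu$, and $\gamma$.

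The inductive heart of the argument runs on depth. At depth $0$ each circuit has its single zero vertex, which must correspond. Suppose inductively that $\varphi$ has been defined as a value-preserving bijection on all vertices of depth $< d$ in the two circuits and that it is compatible with the edge structure among them. For a vertex $v$ of depth $d$ we have $\CE(v) = 2^{s_v}$ where $s_v = \sum_{e \in Out_v} \mu(e)\CE(\beta(e))$ is, by condition (N2), a compact binary sum whose terms are values $\CE(\beta(e))$ of strictly smaller depth. Here is where uniqueness of compact forms does the work: the exponent $s_v$ determines $\CE(v)$, and conversely the compact representation of $s_v$ is unique by Lemma \ref{le:compact_sum}, so the multiset of children values $\{(\mu(e), \CE(\beta(e)))\}$ is forced. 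Matching these children through the already-defined $\varphi$ on lower depths, and using (R2) in each circuit to rule out duplicated values, I can extend $\varphi$ to depth $d$ so that it preserves the outgoing edges and their labels $\mu$. Finally, condition (N3) says $\CE(\CP_i) = \sum_{v \in M_i} \nu(v)\CE(v)$ is itself in compact form; since $\CE(\CP_1) = \CE(\CP_2)$, uniqueness of the compact binary sum forces the two marked multisets $\{(\nu(v), \CE(v)) : v \in M_i\}$ to coincide, so $\varphi$ carries $M_1$ onto $M_2$ preserving $\nu$.

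The main obstacle, and the step deserving the most care, is establishing that the two circuits genuinely realize the \emph{same} set of vertex values, rather than merely agreeing at the top. The subtlety is that (R2) controls rigidity \emph{within} one circuit but says nothing a priori across the two; a vertex value occurring in $\CP_1$ need not occur in $\CP_2$ unless it is forced by the compact-form data. The resolution is to make the induction simultaneous and top-down from the marked level: condition (N3) pins down the marked values, and condition (N2) then propagates the pinning downward, since every vertex appearing at depth $d$ is, by the trimmed condition (each vertex is reachable from a marked vertex), the terminus of some edge whose compact exponent sum has already been forced to match. I would need to verify that trimming, together with (N2) and the uniqueness lemma, really does force value-by-value agreement all the way down; once this descent is shown, both the injectivity of $\varphi$ and its compatibility with $\mu,\nu,\gamma$ follow routinely.
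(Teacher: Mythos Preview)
Your proposal is correct and, after your own course-correction, lands on exactly the argument the paper gives: match the marked vertices first via (N3) and the uniqueness of compact binary sums, then propagate the bijection downward through (N2), with the trimmed condition guaranteeing every vertex is eventually reached. The paper proceeds directly top-down without the initial bottom-up depth scaffolding you sketched and then abandoned, but the substantive mechanism is identical.
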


\begin{proof}
``$\Leftarrow$'' Obvious.

``$\Rightarrow$''
For $v_1\in V(\CP_1)$ we define $\varphi(v_1)$ to be the vertex $v_2\in V(\CP_2)$
such that $\CE(v_1) = \CE(v_2)$. Below we prove that for every $v_1$ there exists
$v_2$ with that property. Uniqueness of $v_2$ follows from the fact that $\CP_2$
is reduced.

Since $\CP_1$ and $\CP_2$ are equivalent we have
    $$\sum_{v\in M(\CP_1)} \nu(v) \CE(v) = \CE(\CP_1) = \CE(\CP_2) = \sum_{v\in M(\CP_2)} \nu(v) \CE(v),$$
where $\CE(v) = 2^{\sum_{e\in Out_{v}} \mu(e)\CE(\beta(e))}$.
By (N3) the sums for $\CE(\CP_1)$ and $\CE(\CP_2)$ are compact and, hence,
by Lemma \ref{le:compact_sum} are essentially the same (up to a permutation of summands).
Therefore, $\varphi$ defined above gives a one to one correspondence
between $M(\CP_1)$ and $M(\CP_2)$.

Suppose that $v_1 \in V(\CP_1)$ and $v_2 \in V(\CP_2)$ satisfy $\CE(v_1) = \CE(v_2)$.
Then
    $$\sum_{e\in Out_{v_1}} \mu(e)\CE(\beta(e)) = \sum_{e\in Out_{v_2}} \mu(e)\CE(\beta(e))$$
and both sums are in compact form by (N2).
By Lemma \ref{le:compact_sum} these sums are essentially the same and there is
one to one correspondence of the summands.

Finally, since $\CP_1$ and $\CP_2$ are trimmed, every vertex is a descendant of a marked
vertex. Therefore, we can inductively extend the one to one correspondence $\varphi$
from the marked vertices to all vertices of $\CP_1$. It is easy to see that $\varphi$
is a required graph isomorphism preserving values of $\mu$, $\nu$, and $\gamma$.
\end{proof}

\section{Reduction process}
\label{se:reduction}

The main goal of this section is to prove the following theorem, which is the main technical result of the paper.

\begin{theorem} \label{th:reduction}
There is an algorithm that given a constant power circuit $\CP$ constructs an equivalent reduced power circuit $\CP'$ in time
$O(|V(\CP)|^3)$. Moreover, $|V(\CP')| \leq |V(\CP)| + 1$.
\end{theorem}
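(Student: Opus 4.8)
The plan is to build $\CP'$ incrementally by processing the vertices of $\CP$ from the bottom up, maintaining at every stage a sub-circuit that is already reduced. First I would apply Algorithm \ref{al:remove_redundancies} to put $\CP$ into standard form in linear time, so that (R1) holds and there is a unique zero vertex; this also accounts for the additive constant in the bound $|V(\CP')|\le |V(\CP)|+1$, since the remaining steps only merge or delete vertices and never create new ones. It then remains to enforce (R2) and (R3): no two vertices should output the same value, and no origin should carry a redundant or superfluous pair of edges. The central difficulty, flagged in the Remark preceding the definition of reduced circuits, is that one cannot afford to compute the values $\CE(v)$ directly, as they may be superexponentially large.

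The observation that circumvents this is that for a non-leaf vertex $v$ one has $\CE(v) = 2^{t_v}$ with $t_v = \sum_{e\in Out_v}\mu(e)\CE(\beta(e))$, and each child value $\CE(\beta(e))$ is itself a power of two. Hence, once the children of $v$ have been arranged into a reduced sub-circuit whose vertices carry pairwise distinct values, the exponent $t_v$ is literally a binary sum in the sense of Section \ref{se:binary_sums}, whose powers are the distinct, and therefore comparable, exponents of the children of $v$. Comparing two vertices $u$ and $v$ then reduces to comparing the binary sums $t_u$ and $t_v$, which Proposition \ref{pr:complexity_compare} and Lemma \ref{le:comp_pos_binary_decomp} accomplish in linear time with the decisive extra precision of distinguishing equality from an off-by-one difference:
\[
\CE(u)=\CE(v)\iff t_u=t_v,\qquad \CE(u)=2\CE(v)\iff t_u=t_v+1.
\]
This is exactly the information needed to detect an (R2)-violation (merge $u$ and $v$ when $t_u=t_v$) and to recognize redundant edges ($t_u=t_v$ for two opposite-sign edges out of a common origin) and superfluous edges ($t_u=t_v+1$).

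Concretely, I would process the vertices in a bottom-up (topological) order, which for proper circuits coincides with increasing $\CE$-value, maintaining a sorted list of the already-processed vertices, all with distinct values. When a new vertex $v$ is reached, all of its children already lying in the processed part, I first normalize its out-edges by rewriting the binary sum $t_v$ into reduced, superfluous-pair-free form; this removes the redundant and superfluous edges at $v$ and is linear in $|Out_v|$ by Lemma \ref{le:superfluous}. I then locate $v$ in the sorted list by binary-sum comparisons against processed vertices: if some processed $u$ satisfies $t_u=t_v$ I merge $v$ into $u$, and otherwise insert $v$ as a new distinct value. Correctness follows by induction, since at each stage the processed vertices form a reduced sub-circuit, so the exponents $t_u$ really are reduced binary sums over distinct children and Proposition \ref{pr:complexity_compare} applies; the sign and value bookkeeping for the final output $\CE(\CP')$ is controlled by Proposition \ref{pr:circuit_compare}. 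For the complexity, there are $O(|V(\CP)|)$ vertices; placing each one requires $O(|V(\CP)|)$ comparisons, and each comparison is of binary sums of length $O(|V(\CP)|)$, hence costs $O(|V(\CP)|)$ by Proposition \ref{pr:complexity_compare}, yielding the claimed $O(|V(\CP)|^3)$ bound.

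The step I expect to be the main obstacle is the bookkeeping around merges: identifying two vertices as equal changes the out-edge multisets of their parents, so the normalization of a parent's exponent must be deferred until all of its children are finalized, and one must argue that processing strictly in the order above keeps this consistent and that no cascade forces a vertex to be reconsidered out of turn. Fitting this ordering and the incremental updates inside the cubic budget — rather than the correctness of any single comparison, which is already guaranteed by Section \ref{se:binary_sums} — is where the real work lies.
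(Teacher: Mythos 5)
Your overall architecture --- standardize first, then process vertices bottom-up while maintaining a value-sorted set of already-finalized vertices with pairwise distinct values, using binary-sum comparisons (Proposition \ref{pr:complexity_compare}) that distinguish equality from an off-by-one difference --- matches the paper's reduction process (the set $C$ with properties (C1), (C2), Lemma \ref{le:compare_vertices}, Algorithm \ref{al:reduce}). But your method for \emph{repairing} an (R2) violation is where the proposal breaks: you propose to \emph{merge} $v$ into a processed vertex $u$ with $\CE(u)=\CE(v)$, and this operation is in general not expressible in the power-circuit formalism. If some parent $w$ has edges to both $u$ and $v$ with the \emph{same} sign, then after the merge $w$'s exponent must contain the summand $2\CE(u)$; since edge labels are $\pm 1$ and multiple edges are forbidden, this forces an edge from $w$ to a vertex of value $2\CE(u)$, which need not exist in the circuit --- and creating it means representing the exponent $t_u+1$, which can itself require a further new vertex. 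The same obstruction occurs when $u$ and $v$ are both marked with the same sign (the output sum $\sum_{v\in M}\nu(v)\CE(v)$ also has coefficients $\pm1$). So this is not, as you suggest, a scheduling/bookkeeping issue about when parents are normalized; it is a representability problem, and it falsifies your premise that ``the remaining steps only merge or delete vertices and never create new ones'' --- which also invalidates your derivation of the bound $|V(\CP')|\le|V(\CP)|+1$ (under your account the bound would be $|V(\CP')|\le|V(\CP)|$, and Figure \ref{fi:new_vertex} in the paper shows a circuit where an extra vertex is genuinely unavoidable).

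The paper resolves exactly this difficulty by doing the opposite of merging: Algorithm \ref{al:separation} (\emph{Double}) keeps both vertices but doubles the value of one of them, $\CE(v_j)\mapsto 2\CE(v_j)$, and then rewires parents and marks so that all other $\CE$-values and $\CE(\CP)$ are preserved --- a parent with same-sign edges to both now keeps a single edge to the doubled vertex, opposite-sign pairs cancel, and marks are updated analogously. Since the doubled value may collide with yet another vertex, this is iterated along a ``separation sequence'' (Algorithm \ref{al:separation_total}), and the genuinely hard part of the paper's proof --- the Main Assertion with properties (D1)--(D7) in Proposition \ref{pr:reduction_and_vertices}, building on Proposition \ref{pr:new_down_vertex} --- is showing that the entire reduction introduces at most \emph{one} auxiliary vertex in total and stays within the cubic time budget. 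Your proposal has no counterpart to this machinery, and any repair of the merge step (creating the needed vertex of value $2\CE(u)$ and controlling the cascade) would have to reconstruct essentially this argument. A minor additional slip: a bottom-up topological (geometric) order need not coincide with increasing $\CE$-value even for proper circuits (a vertex with negatively-labelled out-edges can have smaller value than its children); this does not hurt your scheme, which only needs children finalized before parents, but the claim as stated is false.
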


We accomplish this in a series of lemmas and propositions.  The algorithm itself is described as Algorithm \ref{al:reduce} below.

\subsection{Geometric order}

In this section we present an algorithm which transforms a circuit
$\CP$ into a reduced one. Property (R1) and (R3) can be easily
achieved using Algorithm \ref{al:remove_redundancies}
which produces a trimmed strongly equivalent standard circuit of smaller size. Our
main goal is to find an algorithm that produces equivalent circuit
satisfying property (R2).

We say that a sequence $\{v_1,\ldots,v_n\}$ of vertices of $\CP$
is {\em geometrically ordered} if for each edge $e=v_i \rightarrow
v_j \in E(\CP)$ we have $i>j$.

\begin{lemma}
For any circuit $\CP$ there exists a geometric order on $V(\CP)$.
\end{lemma}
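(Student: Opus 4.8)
The plan is to recognize this as the standard statement that a finite directed acyclic graph admits a (reverse) topological ordering, and to prove it by induction on $|V(\CP)|$, peeling off leaves one at a time. Since a power circuit is by definition a finite directed \emph{acyclic} graph, all the hypotheses needed are already in place. The guiding idea is that the desired condition ``$v_i \rightarrow v_j$ implies $i>j$'' forces termini to receive smaller indices than origins, so leaves (vertices with $Out_v=\emptyset$) should be assigned the smallest indices and the order should be built up from them.

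First I would isolate the key sublemma: every nonempty finite directed acyclic graph contains a leaf, i.e.\ a vertex $v$ with $Out_v=\emptyset$. I would prove this by contradiction via path-following. If every vertex had an outgoing edge, then starting from any vertex one could follow an outgoing edge repeatedly to produce an infinite walk $u_0 \rightarrow u_1 \rightarrow u_2 \rightarrow \cdots$. Since $V(\CP)$ is finite, some vertex must repeat, say $u_s = u_t$ with $s<t$, and then $u_s \rightarrow u_{s+1} \rightarrow \cdots \rightarrow u_t = u_s$ is a directed cycle, contradicting acyclicity. Hence a leaf exists.

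With the sublemma in hand, I would run the induction. For the base case $|V(\CP)|=1$ the single vertex is trivially geometrically ordered. For the inductive step, pick a leaf $v$ (which exists by the sublemma), and consider the induced subgraph $\CP'$ obtained by deleting $v$ together with all edges incident to it. Deleting a vertex cannot create a cycle, so $\CP'$ is again a finite acyclic graph with fewer vertices; by induction it admits a geometric order $\{v_1,\ldots,v_{n-1}\}$. I would then extend this by appending $v$ as $v_n$, i.e.\ giving the deleted leaf the \emph{largest} index. To verify the extended sequence is geometric, I would check each edge $v_i \rightarrow v_j$ of $\CP$: if neither endpoint is $v$ the condition $i>j$ holds by the inductive hypothesis; and since $v$ is a leaf it has no outgoing edges, so $v$ never occurs as an origin $v_i=v$, while if $v=v_j$ is a terminus then the origin has index $i<n=j$, again giving $i>j$. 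Thus every edge points from higher to lower index.

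The argument is routine and there is no genuine obstacle; the only point requiring a little care is the verification step in the induction, where one must confirm that assigning the removed leaf the top index is consistent with all edges entering it, which works precisely because a leaf has empty out-set and so can only appear as a terminus. (Equivalently, one could delete sources instead and argue symmetrically, but peeling off leaves matches the paper's leaf-centric terminology most directly.)
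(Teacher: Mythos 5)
Your overall strategy---induction on $|V(\CP)|$ together with the sublemma that every nonempty finite directed acyclic graph has a leaf---is sound and is essentially the paper's argument (the paper gets its leaf from its zero-vertex lemma for constant circuits, whereas your path-following argument is more self-contained and works for arbitrary circuits). However, your inductive step as written is wrong: you delete a leaf $v$ and then append it as $v_n$ with the \emph{largest} index. A leaf has no outgoing edges, but it may well have incoming edges; for any such edge $u \rightarrow v$ with $u = v_i$, $i \le n-1$, and $v = v_n$, the geometric order condition demands $i > n$, which is impossible. Indeed, your own verification sentence reads ``the origin has index $i<n=j$, again giving $i>j$,'' which is a non sequitur: $i<j$ contradicts, rather than yields, $i>j$. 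This also clashes with the guiding idea you state at the outset, namely that leaves must receive the \emph{smallest} indices.

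The repair is immediate and is exactly what the paper does: insert the deleted leaf at the \emph{bottom} of the order, i.e., take the geometric order $\{v_1,\ldots,v_{n-1}\}$ of $\CP'$ furnished by induction and output $\{v, v_1,\ldots,v_{n-1}\}$, so that $v$ gets index $1$ and every other index shifts up by one. Edges among the old vertices still run from higher to lower index, and every edge entering $v$ has origin of index at least $2 > 1$, so the condition holds. Alternatively, you may keep your ``append at the top'' step, but then you must peel off a \emph{source} (a vertex with $In_v=\emptyset$) rather than a leaf---this is precisely the symmetric variant you mention in your closing parenthesis; you have simply paired the wrong endpoint of that symmetry with the wrong end of the ordering.
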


\begin{proof}

Induction on the number of vertices. Clearly a geometric
ordering
exists for $\CP$ with $|V(\CP)|=1$. Assume it exists for any 
directed graph $\CP$ without loops such that $|V(\CP)| <
N$. Let $\CP$ be a graph on $N$ vertices. Then by Lemma
\ref{le:zero_exists} there is a zero vertex $z$ in $\CP$.
Let $\CP'$ be obtained from $\CP$ by removing $z$ and
$\{v_1,\ldots,v_{N-1}\}$ be a geometric order of its
vertices. Then clearly $\{z,v_1,\ldots,v_{N-1}\}$ is a
geometric order on $V(\CP)$.

\end{proof}

\begin{lemma} \label{le:order_zero_one}
Assume that $\{v_1,\ldots,v_n\}$ is a geometric order on $V(\CP)$.
If $|V(\CP)| \ge 1$ then $v_{1}$ is a zero in $\CP$. If $|V(\CP)|
\ge 2$ and $\CP$ has the unique zero then $\CE(v_2) = 1$.
\end{lemma}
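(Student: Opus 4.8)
The plan is to read off both claims directly from the geometric-order condition together with the inductive definition of $\CE$ on a constant circuit. For the first statement I would examine the outgoing edges of $v_1$. Any edge $e = v_1 \rightarrow v_j$ would force $1 > j$ by the definition of geometric order, which is impossible since all indices are at least $1$; hence $Out_{v_1} = \emptyset$, so $v_1$ is a leaf. Because $\CP$ is constant we have $\gamma \equiv 0$, so every leaf is a zero vertex, giving $\CE(v_1) = 0$. This settles the first claim (and is consistent with Lemma \ref{le:zero_exists}).

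For the second statement, suppose $|V(\CP)| \ge 2$ and that $\CP$ has a unique zero. By the first part $v_1$ is a zero, so the unique zero must be $v_1$, with $\CE(v_1) = 0$. I would then analyze $Out_{v_2}$: every edge $e = v_2 \rightarrow v_j$ forces $2 > j$, hence $j = 1$, so all outgoing edges of $v_2$ terminate at $v_1$. Since power circuits contain no multiple edges, $Out_{v_2}$ is either empty or a single edge $e = v_2 \rightarrow v_1$.

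The only subtle point — and the place where the uniqueness hypothesis is genuinely used — is ruling out the case $Out_{v_2} = \emptyset$. If that held, then $v_2$ would be a leaf and hence a zero vertex distinct from $v_1$, contradicting uniqueness of the zero. Therefore $Out_{v_2} = \{e\}$ with $e = v_2 \rightarrow v_1$, and the defining recursion for $\CE$ yields
$$\CE(v_2) = 2^{\mu(e)\,\CE(v_1)} = 2^{\mu(e)\cdot 0} = 2^{0} = 1,$$
as required. I expect no real obstacle here: the whole argument is a short deduction from the definitions, and the only care needed is invoking uniqueness of the zero to exclude the degenerate possibility that $v_2$ is itself a leaf.
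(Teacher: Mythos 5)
Your proof is correct and follows essentially the same route as the paper's: deduce $Out_{v_1}=\emptyset$ from the geometric-order condition, then note that $Out_{v_2}$ can only contain the single edge $v_2\rightarrow v_1$, with uniqueness of the zero ruling out $Out_{v_2}=\emptyset$. You actually spell out two points the paper leaves implicit (that a leaf in a constant circuit is automatically a zero vertex, and why $Out_{v_2}\ne\emptyset$), which is a welcome clarification but not a different argument.
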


\begin{proof}
Clearly $Out_{v_1} = \emptyset$. Otherwise, there is $v\in
Out_{v_1}$ and by definition of geometric order, $v_1$
cannot precede $v$ which gives a contradiction.

If $\CP$ has a unique zero and $|V(\CP)| \ge 2$ then
$Out_{v_2} \ne \emptyset$. The only edge in $Out_{v_2}$
must be $(v_2,v_1)$ (otherwise we get a contradiction with
geometric order). Hence $\CE(v_2) = 1$.

\end{proof}

\subsection{Equivalent vertices}

\label{se:equal_elts}

In this section we define an inductive step for reduction of power
circuits. Let $\CP$ be a power circuit. We say that vertices $v_i$
and $v_j$ are called {\em equivalent} if $\CE(v_i) = \CE(v_j)$.
Clearly, $\CP$ satisfies (R2) if and only if it does not contain equivalent
vertices.

Assume that $\CP$ is a circuit satisfying (R1) and (R3) and $v_i,
v_j$ is the only pair of distinct equivalent vertices in $\CP$.
{\bf All circuits in this section are of this type.} In this
section we show how one can double the value of $\CE(v_j)$ in
$\CP$ while keeping $\CE$-values of all other vertices and the
value $\CE(\CP)$ the same. Using that algorithm we show
later how one can obtain a reduced circuit $\CP'$ equivalent to
$\CP$. The next algorithm transforms the given circuit $\CP$ so
that the vertices $v_i$, $v_j$ are not reachable from each other
along directed paths.

\begin{algorithm}\ \ \label{al:non_reachable}
    \\{\sc Input.}
A circuit $\CP$ satisfying the properties of this section.
    \\{\sc Output.}
An equivalent circuit $\CP'$ satisfying the properties of this
section such that $v_i$ and $v_j$ are not reachable from each
other.
    \\{\sc Computations.}
\begin{enumerate}
    \item[A)]
If $v_i$ is reachable from $v_j$ then:
\begin{enumerate}
    \item[(1)]
Remove all edges leaving $v_j$.
    \item[(2)]
For each edge $v_i \stackrel{\mu}{\rightarrow} u$ add an edge $v_j
\stackrel{\mu}{\rightarrow} u$.
    \item[(3)]
Output the obtained circuit.
\end{enumerate}
    \item[B)]
If $v_j$ is reachable from $v_i$ then perform steps as in the case
A. for $v_i$ and output the result.
    \item[C)]
If neither of $v_i$, $v_j$ is reachable from the other then output
$\CP$.
\end{enumerate}
\end{algorithm}

Observe that Algorithm \ref{al:non_reachable} does not change the
vertex set of $\CP$. In the next lemma we prove that the output of
Algorithm \ref{al:non_reachable} possesses all the claimed
properties.

\begin{lemma}\label{le:remove_edge_equivertices}
Let $\CP$ be a circuit on vertices $\{v_1,\ldots,v_n\}$, $\CP'$
the output of Algorithm \ref{al:non_reachable}, and $V(\CP') =
\{v_1',\ldots,v_n'\}$ where $v_i' \in V(\CP')$ corresponds to $v_i
\in V(\CP)$. Let $v_i$ and $v_j$ be two distinct vertices with
$\CE(v_i) = \CE(v_j)$. Then
\begin{enumerate}
    \item[1)]
$\CE(\CP) = \CE(\CP')$.
    \item[2)]
$\CE(v_i) = \CE(v_i')$ for every $k=1,\ldots,n$.
    \item[3)]
Neither $v_i$ nor $v_j$ is reachable from the other through
directed edges in $\CP'$
\end{enumerate}
Moreover, it takes linear time $O(|\CP|)$ to construct $\CP'$.
\end{lemma}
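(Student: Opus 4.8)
The plan is to analyze the three cases of Algorithm \ref{al:non_reachable} and verify each claim directly. First I would dispose of case C, where neither $v_i$ nor $v_j$ is reachable from the other: here $\CP' = \CP$, so claims 1) and 2) are trivial, and claim 3) holds by the case hypothesis. The substance is in cases A and B, which are symmetric, so I would treat case A (where $v_i$ is reachable from $v_j$) and remark that case B follows by interchanging the roles of $v_i$ and $v_j$.

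For case A, the key observation is that before the modification $\CE(v_i) = \CE(v_j)$, so the two vertices output the same number, namely the same value $2^{\sum_{e \in Out_{v_i}} \mu(e)\CE(\beta(e))}$. Step (1) deletes all edges leaving $v_j$ and step (2) installs on $v_j$ exact copies (same terminus, same label $\mu$) of the edges leaving $v_i$. Hence after the modification the outgoing-edge multiset of $v_j$ is identical to that of $v_i$. The crucial point I must check is that no terminus $\beta(e)$ of an edge out of $v_i$ has its $\CE$-value disturbed by the operation: since we only alter the outgoing edges of $v_j$, the only vertex whose defining sum changes is $v_j$ itself, and its children are, by construction, the children of $v_i$, none of which is $v_j$ (the graph is acyclic, so $v_j$ is not its own descendant). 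Therefore every $\CE(\beta(e))$ appearing in the rebuilt sum for $v_j$ is unchanged, giving $\CE(v_j') = \CE(v_i') = \CE(v_i)$ after the operation. Claim 2) then follows by a geometric-order induction: vertices not lying below $v_j$ are untouched, and $v_j$ recovers its old value, so all $\CE$-values are preserved; claim 1) is an immediate consequence since $\CE(\CP) = \sum_{v \in M} \nu(v)\CE(v)$ depends only on the preserved $\CE$-values of marked vertices.

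For claim 3) I would argue that after step (1) the vertex $v_j$ has no outgoing edges other than the freshly added copies of $v_i$'s edges, whose termini are the children of $v_i$; since $v_i$ was reachable \emph{from} $v_j$ and the graph is acyclic, $v_j$ is \emph{not} reachable from $v_i$, so none of these children can reach $v_j$, and after the surgery $v_j$ is no longer an ancestor of $v_i$ either, because every directed path formerly passing through $v_j$ to reach $v_i$ has been severed at $v_j$. Thus neither vertex reaches the other. The linear time bound is clear, since the algorithm performs a single reachability test, deletes $|Out_{v_j}|$ edges, and adds $|Out_{v_i}|$ edges, all bounded by $O(|\CP|)$.

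I expect the main obstacle to be the careful bookkeeping in claim 2), specifically confirming that rerouting $v_j$'s edges cannot inadvertently change the $\CE$-value of any vertex on which $v_j$ depends; this is where acyclicity and the fact that we copy exactly $v_i$'s outgoing structure (not an arbitrary one) are doing the real work, and it is worth stating the geometric-order induction explicitly rather than waving at it.
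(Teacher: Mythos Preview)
Your proposal is correct and follows essentially the same approach as the paper: handle the trivial case C first, then in case A use the equality $\CE(v_i)=\CE(v_j)$ to see that replacing $v_j$'s outgoing edges by copies of $v_i$'s leaves $\CE(v_j)$ unchanged, and conclude that all other $\CE$-values and hence $\CE(\CP)$ are preserved. Your write-up is in fact more careful than the paper's, which simply asserts that ``it is easy to show that $\CE$-values of all other vertices do not change'' and does not argue claim 3) at all; your explicit use of acyclicity (children of $v_i$ cannot reach $v_j$, since otherwise $v_i$ would reach $v_j$, contradicting the case hypothesis) and your geometric-order induction are exactly the details the paper omits.
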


\begin{proof}
Assume that $v_i$ is reachable from $v_j$ in $\CP$ along a
directed path. By assumption of the lemma we have
$$\CE(v_j) = 2^{\left(\sum_{e \in Out_{v_j}} \mu(e) \CE(\beta(e))\right)} =
2^{\left(\sum_{e \in Out_{v_i}} \mu(e) \CE(\beta(e))\right)} = \CE(v_i)$$
and, therefore, $\sum_{e \in Out_{v_i}} \mu(e) \CE(\beta(e)) =
\sum_{e \in Out_{v_j}} \mu(e) \CE(\beta(e))$. Thus, replacing
edges leaving $v_j$ with edges leaving $v_i$ does not change
$\CE(v_j)$. Furthermore, it is easy to show that $\CE$-values of
all other vertices do not change. Finally, since the sign function
does not change the obtained circuit is equivalent to the initial
one. Clearly, the described procedure produces $\CP'$ in linear
time.

The case when $v_j$ is reachable from $v_i$ in $\CP$ is similar.
The case when neither of vertices can be
reached from the other is trivial.
\end{proof}

(Recall that each $\CP$ satisfies properties in the
beginning of this section.) The next algorithm makes values
$\CE(v_i)$ and $\CE(v_j)$ different by doubling the value
of $\CE(v_j)$. $\CE$-values of all other vertices remain
the same. For convenience we use the following notation
throughout the rest of the paper. We denote by $v^{(n)}$ a
vertex such that $\CE(v) = n$ (if it exists). Recall that
for each vertex $v \in \CP$ the value $\CE(v)$ is a power
of two. Hence if $n$ is not a power of $2$ then a vertex
$v^{(n)}$ does not exist in $\CP$.

\begin{algorithm} \label{al:separation}({\em Double $\CE$-value of a vertex})
$\CP' = Double(\CP,v_i,v_j)$.
    \\{\sc Input.}
A circuit $\CP$ on vertices $\{v_1,\ldots,v_n\}$ with the
specified pair of vertices $v_i$, $v_j$ such that $\CE(v_i) =
\CE(v_j)$.
    \\{\sc Output.}
An equivalent circuit $\CP'$ on vertices $\{v_1',\ldots,v_n'\}$
(with, maybe, one additional vertex $d$) such that $\CE(v_j') =
2\CE(v_j)$ and $\CE(v_k') = \CE(v_k)$ for $k\ne j$.
    \\{\sc Computations.}
\begin{enumerate}
 \item[A)] Apply Algorithm \ref{al:non_reachable} to vertices
 $v_i$, $v_j$ in $\CP$.
 \item[B)] Double the value $\CE(v_j)$ as follows:
\begin{enumerate}

 \item[1)] Compute the maximal number $N$ such that for each
$0\le k<N$ there exists an edge $v_j \stackrel{1}{\rightarrow}
v^{(2^k)}$ in $\CP$.

\item[2)] If $v^{(2^N)}$ does not exist in $\CP$ then (see
Figure \ref{fi:adding_one_to_power3})

\begin{enumerate}
 \item[a)] add a new vertex $d$ into $\CP$;
 \item[b)] connect $d$ to vertices in $\{ v^{(2^0)},\ldots,v^{(2^{N-1})} \}$
in such a way that $\CE(d) = 2^N$ (by Lemma
\ref{le:unique_positive_bin_sum} it is possible);
 \item[c)] remove all the edges $v_j \stackrel{1}{\rightarrow}
v^{(2^k)}$ (for each $0\le k<N$);

 \item[d)] add an edge $v_j \stackrel{1}{\rightarrow} d$.
\end{enumerate}

    \item[3)]
The case when $v^{(2^N)}$ exists in $\CP$ and there is an edge
$v_j \stackrel{-1}{\rightarrow} v^{(2^N)}$ is impossible since by
assumption there are no superfluous edges in $\CP$ .

 \item[4)] If $v^{(2^N)}$ exists in $\CP$ and
there is no edge between $v_j$ and 
$v^{(2^N)}$ then:

\begin{enumerate}
 \item[a)] remove all the edges $v_j \stackrel{1}{\rightarrow}
v^{(2^k)}$ (for each $0\le k<N$);

 \item[b)] add the edge $v_j \stackrel{1}{\rightarrow} v^{(2^N)}$.
\end{enumerate}
\end{enumerate}

 \item[C)] (Update edges) For each $k \in \{1,\ldots,n\} \setminus
\{i,j\}$ do the following:

\begin{enumerate}
 \item[1)] If there exist edges $e_1 = v_k \stackrel{\pm
1}{\rightarrow} v_i$ and $e_2 = v_k \stackrel{\pm
1}{\rightarrow} v_j$ (labels are equal) then erase the edge
$e_1$.

 \item[2)] If there exist edges $e_1 = v_k \stackrel{\pm 1}{\rightarrow} v_i$ and $e_2 =
v_k \stackrel{\mp 1}{\rightarrow} v_j$ (labels are
opposite) then erase both edges.

\item[3)] If there exists exactly one of the edges $e_1 =
v_k \stackrel{l}{\rightarrow} v_i$ and $e_2 = v_k
\stackrel{l}{\rightarrow} v_j$ then erase it and add an
edge $e_1 = v_k \stackrel{l}{\rightarrow} v_i$ (with the
same label).

\end{enumerate}

 \item[D)] Update marks on $v_i$ and $v_j$:

\begin{enumerate}
\item[1)] If both $v_i$ and $v_j$ are marked and $\nu(v_i)
= \nu(v_j)$ in $\CP$ then unmark $v_i$.

\item[2)] If both $v_i$ and $v_j$ are marked and $\nu(v_i)
= - \nu(v_j)$ in $\CP$ then unmark both $v_i$ and $v_j$.

\item[3)] If exactly one of $v_i$, $v_j$ has a mark $\nu =
\pm 1$ in $\CP$ then unmark it and mark $v_i$ with $\nu$.
\end{enumerate}

 \item[E)] Output the result.
\end{enumerate}

\end{algorithm}

    \begin{figure}[htbp]
    \centerline{
    \includegraphics[scale=0.6]{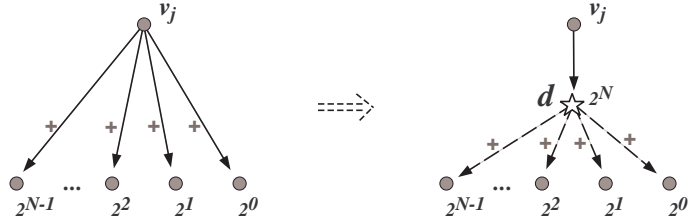} }
    \caption{\label{fi:adding_one_to_power3} Introducing auxiliary vertex $d$
    (case B.2 of Algorithm \ref{al:separation}). The dashed lines denote ``possible'' edges.
    Grey vertices can be marked or unmarked.}
    \end{figure}

Let $\CP'$ be the result of an application of Algorithm
\ref{al:separation} to $\CP$. Observe that Algorithm
\ref{al:separation} does not remove any vertices, but might
introduce a new vertex into $\CP$ at step B.2. We will refer to
this vertex as an {\em auxiliary} vertex and denote it by $d$.
Furthermore, if $V(\CP) = \{v_1,\ldots,v_n\}$ then $V(\CP')$
contains vertices $\{ v'_1, \ldots, v'_n \}$ where each $v_k' \in
V(\CP')$ corresponds to $v_k \in V(\CP)$, and perhaps a new vertex
$d$ which we call an {\em auxiliary vertex}.

\begin{proposition} \label{pr:double_equivalence}
Let $\CP' = Double(\CP,v_i,v_j)$. Then $\CP'$ is equivalent to
$\CP$. Moreover, $\CE(v_j') = 2 \CE(v_j)$ and $\CE(v_k') =
\CE(v_k)$ for each $k \ne j$.
\end{proposition}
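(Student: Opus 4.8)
The plan is to march through the five blocks A--E of Algorithm \ref{al:separation} and record the effect of each on the values $\CE(\cdot)$. Write $c = \CE(v_i) = \CE(v_j)$ for the common value of the two equivalent vertices; since $\CP$ is standard (unique zero vertex) and $v_i \neq v_j$, neither is a zero vertex, so $c = 2^s$ for some $s \ge 0$. The guiding observation is that before block B one has $\CE(v_i) = \CE(v_j) = c$, while after block B one has $\CE(v_i) = c$ but $\CE(v_j) = 2c$; every edge- and mark-surgery in blocks C and D is arranged so that a reference to $v_j$ at its old value $c$ is either deleted against a matching reference to $v_i$, or re-routed to $v_i$ (which retains value $c$), leaving invariant every sum in which $v_i, v_j$ occur. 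The cleanest rigorous skeleton is an induction along a geometric order of $V(\CP')$ (which exists by the lemma above): processing vertices from the leaves upward, I would prove simultaneously that $\CE(v_k') = \CE(v_k)$ for all $k \ne j$ and $\CE(v_j') = 2\CE(v_j)$, each step reducing to the local computation below. The first reduction is block A: by Lemma \ref{le:remove_edge_equivertices} it yields an equivalent circuit with the same vertices and the same $\CE$-values in which $v_i$ and $v_j$ become mutually unreachable, so I may assume $v_j$ is not a descendant of $v_i$ and vice versa --- exactly what renders the block-B surgery on $v_j$ invisible to $\CE(v_i)$.

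Block B touches only out-edges of $v_j$ (together with the auxiliary vertex $d$ and its out-edges), so no value other than $\CE(v_j)$ --- and the freshly created $\CE(d)$ --- can change; in particular $\CE(v_i)$ survives because $v_i$ is not a descendant of $v_j$. It then suffices to check that the exponent $\sum_{e \in Out_{v_j}} \mu(e)\CE(\beta(e))$ rises by exactly $1$. By maximality of $N$ the edges $v_j \stackrel{1}{\rightarrow} v^{(2^k)}$, $0 \le k < N$, are present and contribute $1 + 2 + \cdots + 2^{N-1} = 2^N - 1$. In case B.2 these are deleted and replaced by a single edge $v_j \stackrel{1}{\rightarrow} d$ with $\CE(d) = 2^N$, a vertex one can build by Lemma \ref{le:unique_positive_bin_sum} from the already-present $v^{(2^k)}$ (the binary digits of $N$ lie among $2^0, \ldots, 2^{N-1}$; when $N = 0$ one attaches $d$ to the unique zero vertex to get $\CE(d) = 2^0 = 1$). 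In case B.4 they are replaced by $v_j \stackrel{1}{\rightarrow} v^{(2^N)}$. Either way the net change of the exponent is $-(2^N - 1) + 2^N = +1$, so $\CE(v_j') = 2c$. Case B.3 cannot occur: for $N \ge 1$ the pair $v_j \stackrel{1}{\rightarrow} v^{(2^{N-1})}$, $v_j \stackrel{-1}{\rightarrow} v^{(2^N)}$ would be superfluous, contradicting (R3).

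Blocks C and D are then bookkeeping verified by the three-case split of the algorithm. Fix $v_k$ with $k \ne i, j$; its exponent is unaffected outside the edges into $\{v_i, v_j\}$, and I would check that the contribution $\mu(e_1)\CE(v_i) + \mu(e_2)\CE(v_j)$ of those edges is preserved as $\CE(v_j)$ passes from $c$ to $2c$: in C.1 (both present, equal labels $\ell$) the value $2\ell c$ is kept by dropping the $v_i$-edge and retaining the now-doubled $v_j$-edge; in C.2 (opposite labels) the cancelled contribution $0$ is kept by dropping both; in C.3 (exactly one edge, label $\ell$) the single contribution $\ell c$ is kept by routing the edge into $v_i$, which still has value $c$. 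This yields $\CE(v_k') = \CE(v_k)$ for every $k \ne j$ (and $\CE(v_i') = c$, since the out-edges of $v_i$ are never touched and none of its children is $v_j$, so by induction their values are unchanged). The identical three-case computation on the marks in block D, read against $\CE(\CP) = \sum_{v \in M} \nu(v)\CE(v)$, shows the marked sum is invariant, whence $\CE(\CP') = \CE(\CP)$, i.e.\ $\CP' \sim \CP$.

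The main obstacle I expect is not any single arithmetic step but confirming global well-formedness, i.e.\ that the surgeries compose into a legal acyclic circuit without multiple edges. Re-routing in C.3 is safe against multiple edges because exactly one of the two edges is present; and it is safe against cycles precisely because of block A: if $v_i$ could reach $v_k$, then (as the edge $v_k \rightarrow v_j$ exists) $v_i$ would reach $v_j$, contradicting the mutual unreachability secured in block A, so adding $v_k \rightarrow v_i$ introduces no cycle. I would also single out the degenerate value $N = 0$ to double-check that the exponent bookkeeping of block B still nets $+1$ there.
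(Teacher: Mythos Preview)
Your proposal is correct and follows essentially the same approach as the paper's own proof: march through the blocks of the algorithm, verify that block B raises the exponent of $v_j$ by exactly $1$ via the arithmetic $-(2^N-1)+2^N=+1$, and then check that blocks C and D undo the collateral damage to the remaining $\CE$-values and to the marked sum. Your write-up is considerably more thorough than the paper's --- you spell out the three sub-cases of C and D explicitly and you additionally argue acyclicity and absence of multiple edges after the re-routing in C.3, points the paper simply omits --- but the underlying idea is identical.
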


\begin{proof}
By definition $\CE(v_j) = 2^{p_j}$, where $p_j = \sum_{e \in
Out_{v_j}} \mu(e) \CE(\beta(e))$. Let $N$ be the maximal number
such that for each $k<N$, there exists an edge $v_j
\stackrel{1}{\rightarrow}v^{(2^k)}$ in $\CP$ (computed at step
B.1). Steps B.2), B.3), and B.4) are mutually exclusive. We
consider only one case defined in B.2 ($\CP$ does not contain a
vertex $v^{(2^N)}$) Other cases can be considered similarly.

In the case under consideration the algorithm creates an unmarked
vertex $d$ such that $\CE(d) = 2^N$, removes edges $\{v_j
\stackrel{1}{\rightarrow} v^{(2^0)}, \ldots, v_j
\stackrel{1}{\rightarrow} v^{(2^{N-1})}$ leaving $v_j$, and adds
an edge $v_j \stackrel{1}{\rightarrow} d$. Clearly, after these
transformations $\CE(v_j') = 2^{p_j'}$ where
$$p_j' = p_j - ( 2^{N-1} + \ldots + 2^{0} ) +2^N = p_j+1$$
and, therefore,  after the step B.2 we have $\CE(v_j') = 2
\CE(v_j)$.

Let $v_k$ be a vertex in $\CP$ such that there is an edge $v_k
\stackrel{\pm 1}{\rightarrow} v_j$. Since $\CE(v_j)$ has been
changed the value $\CE(v_k)$ has been changed too. On step C) the
algorithm recovers $\CE$-values of these vertices. It is
straightforward to check (using the definition of $\CE$) that
after the step C) we have $\CE(v_k') = \CE(v_k)$ for all $k\ne j$.

Finally, at step D), Algorithm \ref{al:separation} makes sure that
$\CE(\CP) = \CE(\CP')$. If the vertex $v_j$ is marked then after
the step D) we have  $\CE(\CP') = \CE(\CP) + \nu(v_j) \CE(v_j)$.
To get the equality back Algorithm \ref{al:separation} performs
the step D), which removes the additional summand.

\end{proof}

\begin{proposition}
The time-complexity of Algorithm \ref{al:separation} is
$O(n)$ where $n$ is the number of vertices in $\CP$.
\end{proposition}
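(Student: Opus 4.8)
The plan is to bound the total running time by analyzing each of the steps A)--E) of Algorithm \ref{al:separation} separately and summing the costs, exploiting throughout that $\CP$ satisfies the standing hypotheses of this section: it is standard, free of redundant and superfluous edges, and has exactly one pair of equivalent vertices, so that distinct vertices carry distinct $\CE$-values apart from the single coincidence $\CE(v_i)=\CE(v_j)$. First I would dispatch step A): by Lemma \ref{le:remove_edge_equivertices} the call to Algorithm \ref{al:non_reachable} runs in time linear in $|\CP|$, and afterwards $v_i$ and $v_j$ are mutually unreachable while every $\CE$-value and $\CE(\CP)$ is preserved.

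Next, for step B) the governing quantity is the number $N$ computed in B.1). Each witness $v_j \stackrel{1}{\rightarrow} v^{(2^k)}$ is an outgoing edge of $v_j$ to a distinct vertex, so $N \le |Out_{v_j}| \le n$. Computing $N$ amounts to scanning $Out_{v_j}$ and locating among its endpoints those whose $\CE$-value equals $2^0, 2^1, 2^2, \ldots$; since $\CP$ is reduced these endpoints are uniquely determined, so the scan costs $O(n)$. Whichever of the mutually exclusive cases B.2)--B.4) applies, the remaining work consists of deleting the at most $N$ edges $v_j \stackrel{1}{\rightarrow} v^{(2^k)}$ and adding $O(N)$ new edges, together with (in case B.2) one auxiliary vertex $d$ and the $\le N$ edges realizing $\CE(d)=2^N$ supplied by Lemma \ref{le:unique_positive_bin_sum}; all of this is $O(N)=O(n)$.

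Then step C) iterates over the $n-2$ vertices $v_k$ with $k \ne i,j$, and for each it inspects and updates only the at most two edges $v_k \to v_i$ and $v_k \to v_j$; with adjacency information this is $O(1)$ per vertex, hence $O(n)$ overall. Step D) touches only the marks of $v_i$ and $v_j$ and is $O(1)$, and step E) is trivial. Summing the contributions of A)--E) yields the claimed linear bound $O(n)$.

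The main obstacle I expect is the bookkeeping hidden inside steps B.1) and C): justifying that one can locate the vertices $v^{(2^k)}$ and detect the (at most two) edges joining each $v_k$ to $\{v_i,v_j\}$ within the stated time, without incurring a hidden super-linear search. Both rely on the circuit carrying, alongside its adjacency structure, the precomputed $\CE$-values of its vertices — which are available here because $\CP$ is already reduced — so that the value-comparisons needed to identify the $v^{(2^k)}$ and the edge-membership tests in step C) each cost $O(1)$. Making this data-structure assumption explicit is, I believe, the only point where the naive reading of the algorithm could otherwise inflate the bound.
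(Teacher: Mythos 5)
Your proof is correct and follows essentially the same route as the paper, which simply bounds each of steps A)--D) by $O(n)$ (or $O(1)$) and sums; you have merely filled in the details, including citing Lemma \ref{le:remove_edge_equivertices} for step A). Your closing remark about needing precomputed $\CE$-values is also consistent with the paper, which explicitly notes after this proposition that Algorithm \ref{al:separation} assumes the $\CE$-values are known and defers that bookkeeping to the reduction process of the next section.
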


\begin{proof}
Performing step A) requires $O(n)$ operations, step B) requires
$O(n)$ operations, step C) requires $O(n)$ operations, step D)
requires $O(1)$ operations.

\end{proof}

Observe, that after doubling the value $\CE(v_j)$ we have
$\CE(v_j) = 2\CE(v_i) \ne \CE(v_i)$. But it is possible that there
is a vertex $v_k' \in \CP'$ such that $\CE(v_j') = \CE(v_k')$. In
this case we have to double the value $\CE(v_j')$ again. We
formalize it in the following algorithm.

\begin{algorithm} \label{al:separation_total}({\em $\CE$-value separation})
$\CP' = Separate(\CP,v_j)$.
    \\{\sc Input.}
A circuit $\CP$ with equivalent vertices $v_i$ and $v_j$.
    \\{\sc Output.}
A reduced circuit $\CP'$ equivalent to $\CP$.
    \\{\sc Computations.}
\begin{enumerate}

\item[A)] Double the value $\CE(v_j)$ in $\CP$ by Algorithm
\ref{al:separation}. Denote the result by $\CP'$ (i.e.
$\CP' \leftarrow Double(\CP,v_j)$).

\item[B)] Trim $\CP'$ (using Algorithm \ref{al:trim}).
Denote the result by $\CP$ (i.e. $\CP \leftarrow
Trim(\CP')$).

\item[C)] If $v_j$ was not removed on step B) (see Figure
\ref{fi:reduction_decrease} for example) and there exists a vertex
$v_k \in \CP$ such that $\CE(v_k) = \CE(v_j)$ then goto A) and
repeat for $v_k$ and $v_j$.

\item[D)] Output $\CP$.

\end{enumerate}
\end{algorithm}

\begin{figure}[htbp]
\centerline{
\includegraphics[scale=0.5]{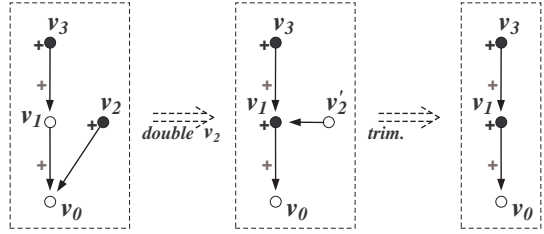}}
\caption{\label{fi:reduction_decrease} Doubling of $\CE(v_2)$
results in a non-trimmed circuit and a removal of $v_2'$. Only one
iteration (application of Algorithm \ref{al:separation}) is
performed.}
\end{figure}

\begin{proposition}\label{pr:separation_equivalence}
Let $\CP'$ be the output of Algorithm \ref{al:separation_total} on
a circuit $\CP$. Then $\CP'$ is equivalent to $\CP$.
\end{proposition}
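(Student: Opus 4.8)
The plan is to prove Proposition \ref{pr:separation_equivalence} by showing that equivalence is preserved through each iteration of the loop in Algorithm \ref{al:separation_total}. The key observation is that the algorithm only does two kinds of things: it calls $Double$, and it trims. Both of these operations have already been shown to preserve equivalence, so the whole task reduces to an induction on the number of iterations of the loop.

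First I would set up the invariant. At the start of each pass through step A), let $\CP$ be the current circuit and $v_i, v_j$ the current pair of equivalent vertices (so $\CE(v_i) = \CE(v_j)$). By Proposition \ref{pr:double_equivalence}, the circuit $\CP' = Double(\CP, v_i, v_j)$ satisfies $\CE(\CP') = \CE(\CP)$, so $\CP'$ is equivalent to $\CP$. Then at step B), trimming via Algorithm \ref{al:trim} produces a strongly equivalent (hence equivalent) circuit, by Proposition \ref{pr:trimming}. Composing these, the circuit passed into the next iteration is equivalent to the one that entered the current iteration. Since equivalence is transitive (both sides induce the same function in $\tilde Z$, equivalently the same integer value $\CE$), chaining the equalities $\CE$ across all iterations gives $\CE(\CP_{\mathrm{out}}) = \CE(\CP_{\mathrm{in}})$, which is exactly the claim.

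The main obstacle I anticipate is not the equivalence-preservation itself — that is routine given the cited results — but rather justifying \emph{termination}, which the proposition implicitly relies on since it asserts $\CP'$ is a well-defined output ``equivalent to $\CP$''. At step C) the loop repeats whenever a new collision $\CE(v_k) = \CE(v_j)$ appears after doubling. Doubling raises $\CE(v_j)$ to $2\CE(v_j)$, so the $\CE$-value of the tracked vertex strictly increases each pass; I would argue that this value is bounded above (for instance by the maximal $\CE$-value occurring among vertices in the circuit, which is controlled once the other vertices are fixed and trimming does not create larger values), so the loop cannot run forever. An alternative, cleaner measure is to track the multiset of $\CE$-values of the vertices and observe that each iteration either eliminates a vertex during trimming or pushes the colliding value strictly upward into a region with fewer available collision partners; one of these must eventually terminate the process. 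Since the body preserves equivalence at each step, the terminating output $\CP'$ is equivalent to the input $\CP$.

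I would therefore structure the write-up as: (1) invoke Proposition \ref{pr:double_equivalence} for the $Double$ step and Proposition \ref{pr:trimming} for the $Trim$ step to establish that one iteration preserves $\CE(\CP)$; (2) argue termination so that the loop produces a well-defined output; and (3) conclude by induction on the number of iterations that $\CE(\CP') = \CE(\CP)$, i.e. $\CP'$ is equivalent to $\CP$. The reducedness of the output (which the algorithm's stated specification also claims) would follow from the stopping condition — the loop exits precisely when no two vertices share an $\CE$-value, giving (R2), with (R1) and (R3) maintained by the standardizing steps — but since the proposition as stated asserts only equivalence, the core argument is the two-step equivalence preservation combined with termination.
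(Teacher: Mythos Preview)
Your proposal is correct and follows essentially the same approach as the paper: the paper's proof is the single line ``Follows from Proposition \ref{pr:double_equivalence},'' which is exactly your step (1) applied across iterations. Your additional invocation of Proposition \ref{pr:trimming} for the trimming step and your discussion of termination are reasonable elaborations, though the paper treats termination separately (immediately after the proposition, via the bound given by the length $s$ of the separation sequence) rather than as part of this proof.
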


\begin{proof}
Follows from Proposition \ref{pr:double_equivalence}.
\end{proof}

One can find an upper bound for the number of iterations Algorithm
\ref{al:separation_total} performs to make $\CE(v_j)$ unique among
$\CE$-values of vertices in $\CP$. Let
\begin{equation} \label{eq:sep_seq}
v_{a_1},\ldots,v_{a_s}
\end{equation}
be a sequence of vertices in $\CP$ such that:
\begin{enumerate}
    \item[1)]
$\CE(v_{a_1}) = \CE(v_j)$, where $v_{a_1}$ and $v_j$ are distinct
vertices;
    \item[2)]
$\CE(v_{a_{k+1}}) = 2 \CE(v_{a_{k}})$ for each $k=1,\ldots,s-1$;
    \item[3)]
$s$ is the maximal length of a sequence with such properties.
\end{enumerate}
We call the sequence (\ref{eq:sep_seq}) satisfying all the
properties above the {\em separation sequence} for $v_j$ in $\CP$.
The number of iterations Algorithm \ref{al:separation_total}
performs to separate $v_j$ is not greater than $s$ as shown in
Figure \ref{fi:reduction_decrease}.

As we mentioned earlier each application of Algorithm
\ref{al:separation} can introduce one auxiliary vertex. Algorithm
\ref{al:separation_total} invokes Algorithm \ref{al:separation}
several times and, hence, several auxiliary vertices might be
introduced. In the next proposition we show that an application of
Algorithm \ref{al:separation_total} introduces at most one
additional vertex.

\begin{proposition} \label{pr:new_down_vertex}
Let $\CP$ be a power circuit. Suppose $v_i$ and $v_j$ are the only
two equivalent vertices in $\CP$. Let $v_{a_1}, \ldots, v_{a_s}$
be a separation sequence for $v_j$. Then during the separation of
$\CE(v_j)$ from $\CE(v_{a_1}), \ldots, \CE(v_{a_s})$ an auxiliary
vertex can be introduced at the last iteration only. Therefore,
Algorithm \ref{al:separation_total} can introduce at most one
auxiliary vertex and if $\CP' = Separate(\CP,v_i,v_j)$ then
$|V(\CP')| \le |V(\CP)|+1$.
\end{proposition}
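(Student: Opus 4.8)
The plan is to reduce the two assertions to a single combinatorial fact about the way $Double$ (Algorithm \ref{al:separation}) increments the exponent of $v_j$. The size bound $|V(\CP')|\le|V(\CP)|+1$ and the ``at most one auxiliary vertex'' statement both follow immediately once we know that among the at most $s$ calls to $Double$ issued by $Separate$ (Algorithm \ref{al:separation_total}), only the last one can enter case B.2: each call adds at most one vertex, and the trimming step of $Separate$ only removes vertices, so at most one net vertex is ever created. Thus the whole proof rests on showing that for every iteration $k<s$ the vertex $v^{(2^{N_k})}$ sought in step B already exists in the current circuit, forcing case B.4 rather than B.2.

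First I would fix notation along the separation. Writing $\CE(v_j)=2^{P}$ with $P=\sum_{e\in Out_{v_j}}\mu(e)\CE(\beta(e))$, the equality $\CE(v_{a_k})=2^{p_j+k-1}$ means that just before the $k$th doubling the exponent of $v_j$ equals $p_j+k-1$, and $Double$ increments it to $p_j+k$. Since every child of $v_j$ has a value that is a power of two, $P$ is itself a reduced binary sum (with no superfluous pair, by (R3)) whose positions are the base-two logarithms of the children's values. The number $N_k$ computed in step B.1 is exactly the length of the maximal initial run of $+1$-edges $v_j\xrightarrow{1}v^{(2^0)},\dots,v_j\xrightarrow{1}v^{(2^{N_k-1})}$; since there is no $+1$-edge to $v^{(2^{N_k})}$ (maximality) and no $-1$-edge there either (that would form a superfluous pair with the $+1$-edge to $v^{(2^{N_k-1})}$), the binary sum $P$ satisfies $p_j+k-1\equiv 2^{N_k}-1\pmod{2^{N_k+1}}$, hence the $2$-adic valuation of $p_j+k$ is exactly $N_k$.

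Now comes the key step. For $k<s$ the vertex $v_{a_{k+1}}$, of value $2^{p_j+k}$, is present in $\CP$, and by Proposition \ref{pr:double_equivalence} every earlier doubling preserves the values of all vertices other than $v_j$, so $v_{a_{k+1}}$ keeps the exponent $p_j+k$. Applying item 5) of Lemma \ref{le:binary_divisibility} to the reduced binary sum realized by $Out_{v_{a_{k+1}}}$: since $p_j+k$ is divisible by $2^{N_k}$ but not by $2^{N_k+1}$, its smallest term is exactly $2^{N_k}$, i.e.\ $v_{a_{k+1}}$ has an edge to a vertex of value $2^{N_k}$. By (R2) this vertex is unique, so it is precisely $v^{(2^{N_k})}$, and it lies in $\CP$ at the $k$th iteration. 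Therefore $Double$ falls into case B.4 and creates no auxiliary vertex. At the final iteration $k=s$ this argument is unavailable, because $v_{a_{s+1}}$ does not exist by maximality of the separation sequence, so $v^{(2^{N_s})}$ may be missing and case B.2 may fire, which is exactly the asserted behaviour.

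The step I expect to be the main obstacle is guaranteeing that $v_{a_{k+1}}$ genuinely survives all earlier iterations, so that the divisibility argument may be applied to a vertex actually present when the $k$th doubling occurs. Here I would invoke the geometric order: at iteration $k$ the vertex $v_j$ has value $2^{p_j+k-1}<2^{p_j+k}=\CE(v_{a_{k+1}})$, so every in-edge of $v_{a_{k+1}}$ originates at a vertex of strictly larger value and in particular not at $v_j$. Steps A)--C) of $Double$ only delete or redirect edges incident to $v_i$ and $v_j$, hence leave $In_{v_{a_{k+1}}}$ untouched; and the trimming step of $Separate$ removes only unmarked sources, so by upward induction in the geometric order the entire set of vertices of value $\ge 2^{p_j+k}$ is preserved. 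This keeps $v_{a_{k+1}}$, and with it $v^{(2^{N_k})}$, alive until the $k$th doubling, completing the argument.
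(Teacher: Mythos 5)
Your main line is the paper's own argument, just run forward instead of by contradiction: the paper assumes case B.2 fires at some iteration $k<s$, computes (exactly as you do) that the exponent of $v_{a_{k+1}}$ equals $p+1$ with $2$-adic valuation exactly $N$, and then invokes Lemma \ref{le:binary_divisibility} to conclude that $v_{a_{k+1}}$ must already be joined to a vertex of value $2^N$, contradicting the absence of $v^{(2^N)}$; you use the same valuation computation and the same lemma to conclude directly that $v^{(2^{N_k})}$ exists, so that case B.4 rather than B.2 fires. The heart of your proposal is therefore correct and coincides with the paper's proof.

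The step that fails is your treatment of what you yourself call the main obstacle. The assertion that ``every in-edge of $v_{a_{k+1}}$ originates at a vertex of strictly larger value'' does not follow from the geometric order (which is a topological order on the directed acyclic graph, not an order by $\CE$-value), and it is not a valid principle for the circuits in play: exponents are \emph{signed} sums, so a vertex can point at a vertex of far larger value. For instance, a vertex $u$ with edges $u\stackrel{+1}{\rightarrow}v^{(4)}$, $u\stackrel{-1}{\rightarrow}v^{(2)}$, $u\stackrel{-1}{\rightarrow}v^{(1)}$ has $\CE(u)=2^{4-2-1}=2<4$. Ruling such configurations out requires the out-sum of the origin to be reduced and free of superfluous pairs, an invariant you have not established for the intermediate circuits of the separation; indeed \emph{Double} itself can destroy it at $v_j$, since the edge $v_j\stackrel{+1}{\rightarrow}v^{(2^{N})}$ it adds forms a superfluous pair with a possible pre-existing edge $v_j\stackrel{-1}{\rightarrow}v^{(2^{N+1})}$. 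Fortunately the survival question answers itself, so your proof is repaired by replacing that paragraph: iteration $k$ is non-final exactly when, after it, the current circuit still contains a vertex of value $2^{p_j+k}$ (this is the loop condition in step C of Algorithm \ref{al:separation_total}); by Proposition \ref{pr:double_equivalence} the values of all vertices other than $v_j$ are preserved, trimming only deletes vertices, and the only vertex ever created has the much smaller value $2^{N_k}\le p_j+k<2^{p_j+k}$, so the vertex triggering the next iteration can only be $v_{a_{k+1}}$, which was consequently present before iteration $k$ as well --- which is all your key step needs. (The paper's proof silently assumes this presence; your instinct that it deserves justification is sound, but the justification you offered is the one piece of the proposal that is wrong.)
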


\begin{proof}
Assume that an auxiliary vertex $d$ was introduced on $k$th
iteration $(k<s)$ (when separating $\CE(v_j)$ and $\CE(v_{a_k})$).
Let $\CE(d) = 2^N$ for some $N \in \mathbb{N}$. Denote by $v_j'$
the vertex $v_j$ after the $k$th iteration (technically $v_j$ and
$v_j'$ belong to different graphs). See Figure
\ref{fi:seq_doubling}.
\begin{figure}[htbp]
\centerline{
\includegraphics[scale=0.5]{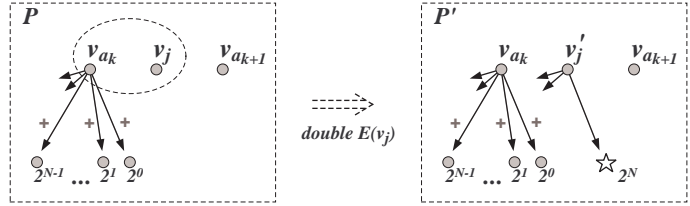}}
\caption{\label{fi:seq_doubling} A situation when an auxiliary
vertex $d$ (such that $\CE(d) = 2^N$), denoted by the star, was
introduced in the middle of separation. In this case we argue that
the vertex $v_{a_{k+1}}$ must be already connected to a vertex
with $\CE$-value equal to $2^N$.}
\end{figure}

Consider vertices $v_{a_k}$ and $v_{a_{k+1}}$ in the circuit
before the $k$th iteration. We have $\CE(v_{a_k}) = \CE(v_j) =
2^{p}$ and $\CE(v_{a_{k+1}}) = \CE(v_j') = 2^{p'}$, where
$$p = \sum_{e \in Out_{v_j}} \mu(e) \CE(\beta(e)) = \ldots + (2^{N-1} + \ldots + 2^0)$$
and
    $$p' = \sum_{e \in Out_{v_j'}} \mu(e) \CE(\beta(e)) = p+1 = \ldots + 2^{N}.$$
Observe that since vertices $v_j,v_{a_k}$ are not reachable from
each other and since $v_j,v_{a_k}$ is the only pair of equivalent
vertices it follows that the binary sum above for $p$ is reduced.
Also, by our assumption (auxiliary vertex is created after the
$k$th separation), there is no edge $e \in Out_{v_{a_k}}$ such
that $\CE(\beta(e)) = 2^N$. Therefore, $2^N$ is the smallest
summand in the binary sum for $p'$ above and $p'$ is divisible by
$2^N$. Hence, $v_{a_{k+1}}$ cannot be connected to vertices
$v^{(2^0)}, \ldots, v^{(2^{N-1})}$ (otherwise it would contradict
divisibility of $p'$ by $2^N$ or the fact that $\CP$ does not
contain multiple edges). And there must exist a vertex $v^{(2^N)}$
in the circuit before the $k$th separation and $v_{a_{k+1}}$ must
be connected to it (follows from Lemma
\ref{le:binary_divisibility}). Obtained contradiction finishes the
proof.

\end{proof}

For some circuits it is impossible to avoid adding a new vertex
when performing separation. Figure \ref{fi:new_vertex} illustrates
the case.
\begin{figure}[htbp]
\centerline{
\includegraphics[scale=0.5]{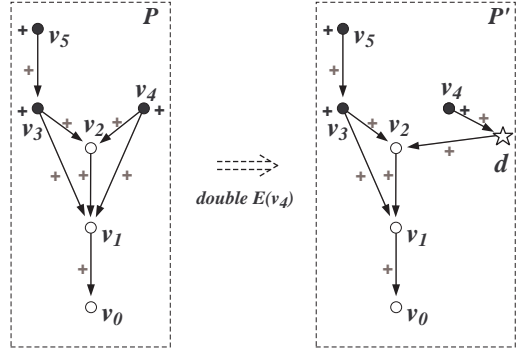} }
\caption{\label{fi:new_vertex} A situation when it is necessary to
introduce an auxiliary vertex to ``separate'' vertices $v_3$ and
$v_4$ ($\CE(v_3) = \CE(v_4) = 8$). To double the value $\CE(v_4)$
we have to add one to the power of $v_4$ which is
$\CE(v_1)+\CE(v_2) = 2+1$ initially. Therefore, a new vertex $d$
such that $\CE(d) = 4$ is required.}
\end{figure}

\begin{proposition}
The time complexity of Algorithm \ref{al:separation_total}
is $O(n^2)$, where $n$ is the number of vertices in $\CP$.
\end{proposition}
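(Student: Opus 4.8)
The plan is to factor the running time of Algorithm \ref{al:separation_total} as (number of iterations of the main loop A)--C)) times (cost of a single iteration), and to bound each factor by $O(n)$. First I would count the iterations. By the discussion preceding Proposition \ref{pr:new_down_vertex}, each pass through step A) replaces $\CE(v_j)$ by $2\CE(v_j)$, so after the $k$th pass the value carried by $v_j$ is $2^{k}\CE(v_{a_1})$, where $v_{a_1},\ldots,v_{a_s}$ is the separation sequence \eqref{eq:sep_seq} for $v_j$. The loop re-enters step A) only while this value still coincides with the $\CE$-value of some other vertex, i.e. while $2^{k}\CE(v_{a_1}) \in \{\CE(v_{a_1}),\ldots,\CE(v_{a_s})\}$; by maximality of the separation sequence this happens at most $s$ times. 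Since the $\CE$-values $\CE(v_{a_1}),2\CE(v_{a_1}),\ldots,2^{s-1}\CE(v_{a_1})$ are pairwise distinct, the vertices $v_{a_1},\ldots,v_{a_s}$ are pairwise distinct, whence $s \le |V(\CP)| = n$. Thus the loop executes $O(n)$ times.

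Next I would bound the cost of one iteration by $O(n)$. Step A) calls Algorithm \ref{al:separation}, which runs in time $O(n)$ by the bound established above. Step B) trims the result with Algorithm \ref{al:trim}, which is linear in the circuit; moreover Proposition \ref{pr:new_down_vertex} guarantees that the whole run of Algorithm \ref{al:separation_total} introduces at most one auxiliary vertex, so the vertex count stays within $n+1$ and the per-iteration trimming cost remains $O(n)$. Step C) searches for a vertex $v_k$ with $\CE(v_k)=\CE(v_j)$; since the circuits treated in this section carry at most one pair of equivalent vertices at any moment, this is a single pass over $V(\CP)$ and costs $O(n)$. Hence each iteration is $O(n)$, and multiplying by the $O(n)$ iterations gives the claimed $O(n^2)$ bound.

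The step I expect to be the main obstacle is the iteration count: certifying rigorously that the loop cannot run more than $s$ times, i.e. that each doubling advances the value of $v_j$ exactly one rung up the separation sequence and never manufactures a fresh equivalent pair lying \emph{outside} that sequence. This is precisely the structural fact secured by Proposition \ref{pr:new_down_vertex} together with the separation-sequence analysis, so I would lean on it directly rather than re-deriving it; once the $O(n)$ iteration bound and the $s \le n$ estimate are in hand, the per-iteration accounting in steps A)--C) is routine.
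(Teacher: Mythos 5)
Your proof is correct and takes essentially the same route as the paper: the paper's own one-sentence proof bounds the number of doublings by $n$ and multiplies by the $O(n)$ cost of a single application of Algorithm \ref{al:separation}, which is exactly your factorization into (iterations) $\times$ (cost per iteration). The details you add — the iteration bound via maximality of the separation sequence (\ref{eq:sep_seq}) and the per-iteration accounting of steps A)--C), including the use of Proposition \ref{pr:new_down_vertex} to control the vertex count — are precisely what the paper establishes in the discussion preceding the proposition, so there is no difference in substance.
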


\begin{proof}
In the worst case one has to double the value of $v_j$ at most $n$
times which makes the complexity of Algorithm
\ref{al:separation_total} at most quadratic in terms of
$|V(\CP)|$.

\end{proof}

Finally, notice that in Algorithm \ref{al:separation} we assume
that we know the $\CE$-values of vertices. In the next section we
explain how it can be achieved.

\subsection{Reduction process}

\label{se:reduction_process}

In this section we present an algorithm which transforms any power
circuit into an equivalent reduced one. Moreover, we show that
this operation can be performed in polynomial time in terms of the
size of the input.

First, we describe the idea of the algorithm. Let $\CP$ be a
trimmed circuit without redundant zeros (described in Section
\ref{se:standard-circuits}). Assume that, in addition, we are provided with a
subset $C$ of $V(\CP)$ satisfying the following properties:
\begin{enumerate}
 \item[(C1)] For $u,v \in C$, $\CE(u) = \CE(v)$ if and only if $u =
v$ (i.e., property (R2) holds inside $C$).

 \item[(C2)] If $u \in C$ and $u \rightarrow v$ is an edge leaving $u$ then
$v \in C$ ($C$ is itself a circuit).
\end{enumerate}
Moreover, assume that we have the following additional
information about $C$:
\begin{enumerate}
\item[1)] Vertices from $C$ are ordered with respect to
their $\CE$-values. In other words there exists a sequence
$c_1,\ldots,c_m$ such that $C = \{v_{c_1},\ldots,v_{c_m}\}$
and $\CE(v_{c_i}) < \CE(v_{s_j})$ if and only if $i<j$.

\item[2)] There is a sequence $d_{1},\ldots,d_{m-1}$ of
$0$'s and $1$'s such that $d_i = 1$ if and only if
$\CE(v_{c_{i+1}}) = 2 \CE(v_{c_{i}})$.
\end{enumerate}
An iteration of the reduction process transforms $\CP = (V,E)$ and
extends the set $C$ so that the number $|V|-|C|$ decreases by at
least one. The reduction procedure works until $C = V$. The main
ingredient is Algorithm \ref{al:separation_total} described in
Section \ref{se:equal_elts}.

The initial set $C$ is computed as follows. Let $\{v_{1},\ldots,
v_{n}\}$ be a geometric order on $V(\CP)$. If $n = 1$ then
$\CE(\CP) = 0$ and $\CP$ is reduced. Suppose $n \ge 2$. It follows
from Lemma \ref{le:order_zero_one} that $\CE(v_{1}) = 0$ and
$\CE(v_{2}) = 1$. Put $C= \{v_1,v_2\}$, $c_1=1$, $c_2 = 2$, and
$d_1 = 0$. This is the basis of computations.

Now we describe one iteration. If $m = n$ then $\CP$ is reduced
and there is nothing to do. Suppose that $m \ne n$. Since $\CP$
has no loops, there exists a vertex $v\in V \setminus C$ such that
$C' = C \cup \{v\}$ satisfies property (C2). Our main goal is to
make $C'$ satisfy property (C1). In the next lemma we show some
computational properties of the set $C'$.

\begin{lemma}\label{le:compare_vertices}
Let $v$ be a vertex in $V \setminus C$ such that $C \cup \{v\}$
satisfies property (C2). For any vertex $u \in C$ one can compare
values $\CE(v)$ and $\CE(u)$ and check if $\CE(v) = 2 \CE(u)$ or
$\CE(u) = 2 \CE(v)$. Furthermore, the time complexity of this
operation is $O(|C|)$.
\end{lemma}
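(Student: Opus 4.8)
The plan is to reduce the three required predicates to a single comparison of two binary sums and then to run a symbolic version of Algorithm \ref{al:compare_binaries}. Since the unique zero vertex already lies in $C$, the vertex $v \in V \setminus C$ is a gate, so $\CE(v) = 2^{p_v}$ with $p_v = \sum_{e \in Out_v} \mu(e)\CE(\beta(e))$, and likewise $\CE(u) = 2^{p_u}$ for every non-zero $u \in C$ (the case where $u$ is the zero vertex is trivial, as then $\CE(v) > \CE(u) = 0$ and neither doubling relation can hold). By (C2) every child of $v$ or of $u$ lies in $C$, and by (C1) the $C$-vertices have pairwise distinct $\CE$-values, each a power of $2$. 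Writing $\CE(v_{c_i}) = 2^{q_{c_i}}$, the exponents occurring in $p_v = \sum_e \mu(e) 2^{q_{\beta(e)}}$ are pairwise distinct, so $p_v$ and $p_u$ are reduced binary sums. As $x \mapsto 2^x$ is strictly increasing, comparing $\CE(v)$ and $\CE(u)$ amounts to comparing $p_v$ and $p_u$, while $\CE(v) = 2\CE(u) \iff p_v = p_u + 1$ and $\CE(u) = 2\CE(v) \iff p_u = p_v + 1$. All three answers can thus be read off the five-valued output of Algorithm \ref{al:compare_binaries} run on $p_v$ and $p_u$.

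The difficulty is that the exponents $q_{\beta(e)}$ are themselves enormous and must never be evaluated. The key observation is that Algorithm \ref{al:compare_binaries} only ever needs to (i) compare two exponents, (ii) decide, for the current largest exponent $n$, whether a given sum has a summand at $n$ or at $n-1$, and (iii) in case 3.c, insert a summand at exponent $n-1$; and all of these are delivered by the combinatorial data attached to $C$. Indeed, if I represent each summand of $p_v$ (resp.\ $p_u$) by the $C$-index of its child, then (i) reduces to comparison of indices, since $\CE(v_{c_i}) < \CE(v_{c_j}) \iff q_{c_i} < q_{c_j} \iff i < j$. For (ii) and (iii), note that if the current largest exponent is $q_{c_i}$, then exponent $n-1 = q_{c_i}-1$ is realized by a $C$-vertex precisely when $d_{i-1} = 1$, in which case that vertex is $v_{c_{i-1}}$; so one lookup in the $d$-sequence decides whether summands at $n-1$ can exist and, if so, which index they carry.

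I would then simulate Algorithm \ref{al:compare_binaries} verbatim on these index representations. First, exactly as in Proposition \ref{pr:complexity_compare}, I remove superfluous pairs from $p_v$ and $p_u$ (a superfluous pair is a pair of children at adjacent $C$-positions $i, i+1$ with $d_i = 1$ and opposite edge labels, detectable and removable in $O(|C|)$ time by Lemma \ref{le:superfluous}); this step is needed because, unlike the ambient reduced circuit, the fresh vertex $v$ may well carry a superfluous edge pair. The step I expect to be the main obstacle is verifying that the case-3.c rewrite never forces an exponent outside the available $C$-indices: there the other sum has a summand at $n-1$, which by the argument above forces $n-1 = q_{c_{i-1}}$ with $d_{i-1}=1$, so the summand newly inserted is carried by the already-present index $c_{i-1}$ and no new exponent is ever manufactured. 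Granting this, each iteration inspects only the top one or two summands of each sum, performs $O(1)$ index comparisons and $d$-lookups, and strictly decreases the total number of summands, which is at most $|Out_v| + |Out_u| = O(|C|)$. Hence the comparison, and with it all three predicates, is computed in time $O(|C|)$.
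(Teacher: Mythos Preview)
Your proposal is correct and follows essentially the same approach as the paper: reduce the comparison of $\CE(v)$ and $\CE(u)$ to comparing the exponents $p_v$ and $p_u$, observe that these are reduced binary sums because (C2) forces all children into $C$ and (C1) makes their $\CE$-values distinct, and then run Algorithm~\ref{al:compare_binaries} symbolically using the order on $C$ and the $d$-sequence, reading off the doubling relations from the $\pm 1$ output values. Your treatment is in fact more careful than the paper's, which simply asserts that ``this information is clearly enough to use Algorithm~\ref{al:compare_binaries}''; you explicitly verify that the case~3.c rewrite never leaves the set of available $C$-indices and that superfluous pairs at $v$ must be stripped first, points the paper leaves implicit (the latter is handled separately as step~E.1 of Algorithm~\ref{al:reduce}).
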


\begin{proof}

By definition $\CE(v) = 2^{p_v}$ and $\CE(u) = 2^{p_u}$ where
$$p_v = \sum_{e \in Out_v} \mu(e) \CE(\beta(e)) \mbox{ \ and \ } p_u = \sum_{e \in Out_u} \mu(e) \CE(\beta(e)).$$
Clearly, $\CE(v) < \CE(u)$ if and only if $p_v < p_u$. Hence, it is
sufficient to compare $p_v$ and $p_u$. It follows from the choice of
$v$ and property (C2) that edges leaving $v$ an $u$ have termini in
$C$ and, therefore, the binary sums above for $p_v$ and $p_u$ are
reduced by (C1). Moreover, by assumption, vertices from $C$ are
ordered with respect to their $\CE$-values and we know $\CE$-values
of which of them are doubles $\CE$-values of others vertices
(provided by the sequence $d_1,\ldots,d_{m-1}$). This information is
clearly enough to use Algorithm \ref{al:compare_binaries} which has
linear time complexity by Proposition \ref{pr:complexity_compare}.
Since $|Out_u| \le |C|$ and $|Out_v| \le |C|$ the linearity of the
process follows.

Finally, since Algorithm \ref{al:compare_binaries} can determine
if $p_u$ and $p_v$ differ by $\pm 1$, one can determine whether
$\CE(v) = 2 \CE(u)$ or $\CE(u) = 2 \CE(v)$.

\end{proof}

Now we can describe the inductive step. By Lemma
\ref{le:compare_vertices} one can compare the vertex $v$ with any
vertex $u \in C$ and, hence, find a position of $v$ in the ordered
sequence $\{ v_{c_1},\ldots,v_{c_m}\} = C$. (Observe that to find
a position of $v$ one does not have to compare $v$ with each $u
\in C$. Instead, this can be achieved by a binary search in at
most $\log_2 m$ comparisons.) There are two outcomes of the
comparison of $v$ with the vertices from $C$ possible. First, if
for each $u \in C$ $\CE(v) \ne \CE(u)$ then we can add $v$ into
$C$ without any modification of a current circuit and update the
sequences $\{c_1,\ldots,c_m\}$ and $\{d_1,\ldots,d_{m-1}\}$
according to the results of comparison. After that $V \setminus C$
becomes smaller and induction hypothesis applies.

In the second case there exists a vertex $u \in C$ such that
$\CE(v) = \CE(u)$. In this case we apply Algorithm
\ref{al:separation_total} to $v$ to make $\CE(v)$ different from
values $\{\CE(v_{c_1}),\ldots,\CE(v_{c_m})\}$. We would like to
emphasize here that the new value $\CE(v)$ might be equal to
$\CE(w)$ for some $w \in V \setminus C$, but it is unique in $C
\cup \{v\}$. After that we can add $v$ into $C$ and update the
order. Also, notice that after the separation an auxiliary vertex
might appear. But since it has a unique $\CE$-value (in $C$) we
can add it into $C$ too. It follows that $|V \setminus C|$ becomes
smaller and induction hypothesis applies.

\begin{algorithm} \label{al:reduce}{\em (Reduction)} $\CP' = Reduce(\CP)$.\\
{\sc Input.} A circuit $\CP$ \\
{\sc Output.} A reduced circuit $\CP'$ equivalent to $\CP$.\\
{\sc Initialization.} $C = \emptyset$.\\
{\sc Computations.}
\begin{enumerate}
 \item[A)] Let $\CP_1 = Trim(\CP)$.
 \item[B)] $\CP_2 = RemoveRedundancies(\CP_1)$ (Algorithm \ref{al:remove_redundancies}).
 \item[C)] Order vertices $V(\CP_2)$ with respect to the geometry of $\CP_2$
 $$V(\CP_2) = \{v_1,\ldots,v_n\}.$$
 \item[D)] Put $C = \{v_1,v_2\}$ and, accordingly, initialize sequences $c_1,c_2$ and $d_1$.
 \item[E)] For each vertex $v \in \{ v_3,\ldots,v_n \}$ (in the order defined by indices) perform the following operations:
\begin{enumerate}
 \item[1)] remove opposite and superfluous pairs of edges leaving $v$;
 \item[2)] using binary search and Algorithm
 \ref{al:compare_binaries} find a position of $v$ in $C$;
 \item[3)] if necessary separate the vertex $v$ from vertices from
 $C$ using Algorithm \ref{al:separation_total};
 \item[4)] add $v$ and, perhaps, a new auxiliary vertex $d$ into $C$ and update the order on $C$.
\end{enumerate}
 \item[F)] Output the obtained circuit.
\end{enumerate}
\end{algorithm}

The sequence of operations E.1)-E.4) applied to $v \in
\{v_3,\ldots,v_n\}$ will be referred to as processing of the
vertex $v$. Figure \ref{fi:reduction_example} illustrates the
execution of Algorithm \ref{al:reduce} for a particular circuit.

\begin{proposition} \label{pr:reduction_result}
Let $\CP' = Reduce(\CP)$. Then $\CE(\CP) = \CE(\CP')$.
\end{proposition}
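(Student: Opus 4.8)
The plan is to show that \emph{every} circuit-modifying step of Algorithm~\ref{al:reduce} leaves the value $\CE(\cdot)$ unchanged, so that chaining the steps together yields $\CE(\CP') = \CE(\CP)$. Steps C) and D) only compute a geometric order and initialize the auxiliary set $C$ together with the sequences $c_i, d_i$; they do not alter the underlying circuit and hence are irrelevant for the value. The same is true of steps E.2) and E.4) inside the main loop, which merely locate $v$ in the ordered set $C$ and update the bookkeeping data. Thus it suffices to account for steps A), B), E.1), and E.3).

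For steps A) and B) I would invoke Proposition~\ref{pr:trimming}: the circuits $\CP_1 = Trim(\CP)$ and $\CP_2 = RemoveRedundancies(\CP_1)$ are strongly equivalent to $\CP$, so in particular $\CE(\CP_2) = \CE(\CP_1) = \CE(\CP)$. For step E.1), the removal of an opposite (redundant) pair of edges preserves $\CE$ by the ``Removing redundant edges'' rule, and the removal of a superfluous pair preserves $\CE$ (indeed strong equivalence) by the ``Removing superfluous edges'' rule of Section~\ref{se:circuit_types}. For step E.3), the separation is carried out by Algorithm~\ref{al:separation_total}, and Proposition~\ref{pr:separation_equivalence} guarantees that its output is equivalent to its input; in particular the possible auxiliary vertex $d$ introduced during separation does not change $\CE(\CP)$, since that is already ensured by Proposition~\ref{pr:double_equivalence}.

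Combining these observations, each pass through the loop E) preserves the value, and so does the composition of all passes; together with the preservation at A) and B) this gives $\CE(\CP') = \CE(\CP)$ by a straightforward induction on the number of processed vertices. The only point requiring a moment of care---and the nearest thing to an obstacle---is verifying that the hypotheses of Algorithm~\ref{al:separation_total} (a circuit satisfying (R1), (R3) with a single pair of equivalent vertices) are actually met when it is invoked at E.3); but this is exactly what step E.1) and the inductive maintenance of property (C1) on $C$ guarantee, so no value-changing side effect can occur. Note also that the statement only asserts equality of $\CE$-values, not strong equivalence, so properness need not be tracked here.
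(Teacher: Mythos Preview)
Your proof is correct and follows essentially the same route as the paper's, which is a one-liner citing Propositions~\ref{pr:red_zeros_equivalence} and~\ref{pr:separation_equivalence}; you have simply unpacked the argument step by step and additionally noted that step~E.1) preserves $\CE$ via the redundant/superfluous-edge rules. One small correction to your final paragraph: the global uniqueness hypothesis of Section~\ref{se:equal_elts} is \emph{not} literally guaranteed at step~E.3) (vertices still outside $C$ may well be equivalent to one another or to vertices in $C$), but this is harmless for the present proposition because the value-preservation in Proposition~\ref{pr:double_equivalence}, and hence in Proposition~\ref{pr:separation_equivalence}, does not actually use that uniqueness assumption.
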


\begin{proof}
Follows from Propositions
\ref{pr:red_zeros_equivalence}, and
\ref{pr:separation_equivalence}.
\end{proof}

\begin{figure}[htbp]
\centerline{
\includegraphics[scale=0.5]{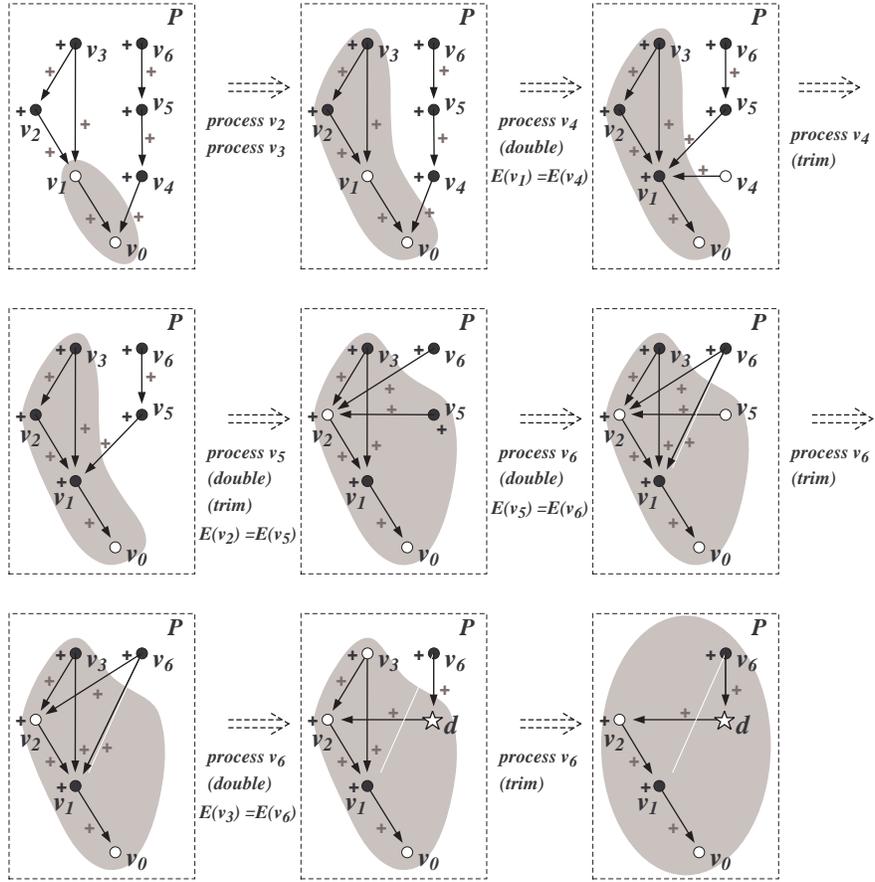} }
\caption{\label{fi:reduction_example} Reduction of a circuit.
Initially $V(\CP) = \{v_0,\ldots,v_6\}$, where vertices are
ordered in geometric order. Grey regions encompass vertices
belonging to $C$.}
\end{figure}

By Proposition \ref{pr:new_down_vertex} each separation might
introduce at most one new auxiliary vertex. Therefore, in the
worst case an application of Algorithm \ref{al:reduce} to $\CP$
can introduce $n-2$ new vertices. In the next proposition we show
that the number of vertices in $\CP$ after an application of
Algorithm \ref{al:reduce} can increase by at most $1$.

\begin{proposition} \label{pr:reduction_and_vertices}
Let $\CP' = Reduce(\CP)$. Then $|V(\CP')| \le |V(\CP)|+1$.
\end{proposition}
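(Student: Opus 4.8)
The plan is to turn the global bound into an amortised count. Writing $\CP_2$ for the standard, trimmed circuit produced at steps A)--B) of Algorithm \ref{al:reduce}, Proposition \ref{pr:red_zeros_equivalence} gives $|V(\CP_2)| \le |V(\CP)|$, so it suffices to control the vertices gained and lost while processing $v_3,\dots,v_n$. The only place a new vertex is born is the creation of an auxiliary vertex $d$ in case B.2 of Algorithm \ref{al:separation}, and the only place vertices die is the trimming inside Algorithm \ref{al:separation_total} (together with the final output step). If $a$ denotes the total number of auxiliary vertices created during the whole run and $t$ the total number of vertices trimmed away, then $|V(\CP')| \le |V(\CP)| + a - t$, and the proposition reduces to the inequality $a \le t + 1$. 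Note that Proposition \ref{pr:new_down_vertex} only yields $a \le 1$ \emph{per call} of $Separate$, i.e.\ $a \le n-2$, which is far too weak on its own; the work is to show that the creations across different processed vertices almost all cancel.

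Next I would pin down exactly when an auxiliary vertex is created. By inspection of Algorithm \ref{al:separation}, a vertex $d = v^{(2^N)}$ is introduced (case B.2) precisely when the vertex $v_j$ being doubled points to the full consecutive run $v^{(2^0)},\dots,v^{(2^{N-1})}$ while $v^{(2^N)}$ is \emph{absent} from the current circuit; whenever the required $v^{(2^N)}$ already exists, case B.4 reroutes $v_j$ to it without adding a vertex. Thus auxiliary vertices arise only by extending, at its top, the maximal run of consecutive powers of two $v^{(2^0)},v^{(2^1)},\dots$ present in the circuit, and each extension lengthens this run by exactly one. I would record this as a lemma: at any moment the circuit contains a unique maximal such run $v^{(2^0)},\dots,v^{(2^{L-1})}$, and creating an auxiliary vertex replaces it by $v^{(2^0)},\dots,v^{(2^{L})}$, with the new top $v^{(2^L)}=d$ being the auxiliary vertex. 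The reuse afforded by case B.4 is what prevents any separation from spending a fresh vertex on a power already realised in the run.

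The heart of the argument, and the step I expect to be the main obstacle, is the compensation bookkeeping $a \le t+1$. When $d=v^{(2^N)}$ is created, step B.2 simultaneously \emph{removes} the edges $v_j \rightarrow v^{(2^k)}$ for $0 \le k < N$ and adds a single edge $v_j \rightarrow d$; the subsequent $Trim$ in step B) of Algorithm \ref{al:separation_total} then deletes every vertex that this rerouting has rendered unreachable from the marked vertices. I would argue that each auxiliary-vertex creation, other than the very first, orphans (hence causes the trimming of) at least one of the run vertices it detaches: once the run has already been extended, a carry reaching the new top passes through the old top, and dismantling that carry chain strips the old top of its last parent exactly when the next higher auxiliary vertex is born. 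Making this precise requires tracking, through the geometric-order processing and the B.4 reuse, that no \emph{other} surviving parent keeps the detached run vertex alive -- this is the delicate point, since a run vertex may a priori be shared by several vertices, and one must use the reducedness maintained inside the set $C$ and the absence of superfluous/redundant edges to rule this out. Granting this, every creation after the first is matched by a distinct trim, giving $a \le t+1$ and therefore $|V(\CP')| \le |V(\CP)| + 1$.
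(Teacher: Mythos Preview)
Your amortisation framework $a \le t+1$ and the observation that an auxiliary vertex can only be born by extending the maximal consecutive run $v^{(2^0)},\dots,v^{(2^{L-1})}$ at its top are both correct, and they match the paper's overall strategy. Where you diverge---and where the genuine gap lies---is the mechanism you propose for the cancellation. You argue that each later creation \emph{orphans} a run vertex, which is then trimmed. But the vertex $d = v^{(2^N)}$ created at step $s$ has $v_s$ (the vertex just processed) as a parent; the edges removed during a hypothetical later creation at step $t$ are edges out of $v_t$, not out of $v_s$, so there is no reason $d$ loses its parent $v_s$. Your own caveat (``a run vertex may a priori be shared by several vertices'') is exactly the obstruction, and the reducedness of $C$ alone does not dissolve it.

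The paper's argument is essentially the contrapositive of what you attempt: rather than showing that a second creation forces a trim, it shows that \emph{in the absence of any trim} a second creation is impossible. Concretely, the paper maintains a list of invariants (D1)--(D7) about the auxiliary vertex $d$ that persist through the processing of $v_{s+1},\dots,v_{t-1}$ as long as no vertex is removed. The decisive ones are: (D3) only already-processed vertices (those in $C$) can have an edge into $d$; and (D4) any vertex $w$ with $w \to d$ has $2^N$ as the \emph{smallest} summand in its exponent, so $w$ cannot simultaneously point to $v^{(2^0)},\dots,v^{(2^{N-1})}$. Since triggering case~B.2 for a new auxiliary vertex $v^{(2^{N'})}$ with $N' > N$ requires the current $v_j$ to point with label $+1$ to the full block $v^{(2^0)},\dots,v^{(2^{N'-1})}$---in particular to $d$ \emph{and} to the lower powers---(D3)/(D4) rule this out. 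The invariants themselves are maintained by checking that any attempted violation during a doubling would render the vertex being processed unmarked with empty in-neighbourhood, hence trimmed, contradicting the no-removal hypothesis. You should replace the orphaning heuristic by this structural obstruction; the bookkeeping is heavier, but it is where the actual content lives.
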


\begin{proof}
It is convenient to introduce the following notation. Let $v$ and
$v'$ be two vertices in $\CP$ such that $\CE(v) = 2 \CE(v')$. In
this event we say that $v'$ is a {\em half} of $v$ and denote it
by $\widetilde{v}$. Also, denote by $\CP_k$ a circuit obtained
after processing a vertex $v_k$ (where $3\le k \le n$) and by
$C_k$ the set of checked vertices in $\CP_k$. For notational
convenience define $\CP_2 = \CP$ and $C_2 = \{v_1,v_2\}$.
Schematically,
$$
 (\CP,\{v_1,v_2\}) = (\CP_2,C_2)
 \stackrel{process ~v_3}{\longrightarrow}
 (\CP_3,C_3)
 \stackrel{process ~v_4}{\longrightarrow}
 \ldots
 \stackrel{process ~v_n}{\longrightarrow}
 (\CP_n,C_n).
$$

The number of vertices in $\CP_k$ changes at steps E.3 only when
Algorithm \ref{al:separation_total} is used. Recall that Algorithm
\ref{al:separation_total}:
\begin{enumerate}
    \item[$\bullet$]
can introduce at most one auxiliary vertex;
    \item[$\bullet$]
remove some of the vertices while trimming the result (Algorithm
\ref{al:separation_total} step B).
\end{enumerate}
By Proposition \ref{pr:new_down_vertex} $|V(\CP_{k+1})| -
|V(\CP_{k})| \le 1$ and $|V(\CP_{k+1})| - |V(\CP_{k})| = 1$ if and
only if processing of $v_{k+1}$ introduced a new auxiliary vertex
and no other vertices were removed while trimming. Therefore, to
prove the statement of the proposition it is sufficient to prove
the following assertion.

\vspace{3mm}\noindent{\bf Main assertion.} Let $s,t$ be two
integers such that $3\le s<t\le n$,
$$|V(\CP_{s-1})|+1 = |V(\CP_s)| = \ldots = |V(\CP_{t-1})|,$$
and there were no vertices removed and no auxiliary vertices
introduced while processing $v_{s+1},\ldots,v_{t-1}$. Processing
of $v_t$ cannot introduce an auxiliary vertex.

 \vspace{3mm}
Let $v_{a_1},\ldots,v_{a_k} \in V(\CP_{s-1})$ be a separation
sequence for $v_s$. It follows from Proposition
\ref{pr:new_down_vertex} that there are edges $v_{a_{k}}
\stackrel{1}{\rightarrow} v^{(2^0)},\ldots,v_{a_{k}}
\stackrel{1}{\rightarrow} v^{(2^{N-1})}$ in $\CP_{s-1}$ (just
before the separation of $v_s$). After the processing of $v_s$
there is an edge $v_s \stackrel{1}{\rightarrow} d$ and no edges
$v_s \stackrel{\pm 1}{\rightarrow} v^{(2^0)}, \ldots, v_s
\stackrel{\pm 1}{\rightarrow} v^{(2^{N-1})}$ in $\CP_{s}$. Recall
that the auxiliary vertex $d$ is created unmarked and there is
only one edge incoming into $d$ which is $v_s
\stackrel{1}{\rightarrow} d$. Moreover, the following claim is
true.

\vspace{3mm}\noindent{\bf Claim 1.} With our assumptions on
$v_s,\ldots,v_{t-1}$ the following is true for each $\CP_k$ ($s\le
k\le t-1$):
\begin{enumerate}
    \item[(D1)]
For each $m = 0,\ldots,N$ there is a vertex $v \in C_k$ such that
$\CE(v) = 2^m$.
    \item[(D2)]
The vertex $d$ is unmarked in $C_k$.
    \item[(D3)]
If there is an edge $w \stackrel{\pm 1}{\rightarrow} d$ in $\CP_k$
then $w \in C_k$ (i.e., only vertices from $C_k$ can be connected
to $d$).

Furthermore, if the edge $e = w \stackrel{\pm 1}{\rightarrow} d$
does not exist in $\CP_{k'}$ then it does not exist in any
$P_{k}$, where $k'<k\le t-1$.
    \item[(D4)]
Let $w$ be a vertex connected to $d$. If $\CE(w) = 2^{p_w}$ then
$2^N$ is the smallest summand in the corresponding binary sum $p_w
= \sum_{e \in Out_w} \mu(e) \CE(\beta(e))$.
    \item[(D5)]
The vertex $v_{k+1} \in \CP_k$ is not equivalent to any vertex
connected to $d$.
    \item[(D6)]
If a vertex $v$ is connected to $d$ then $\widetilde{v}$ is
present in $\CP$ and at least one of $v$, $\widetilde{v}$ is
unmarked.
    \item[(D7)]
For any vertex $w \in \CP$ and a vertex $v$ connected to $d$ there
exists at most one edge $w \stackrel{\pm 1}{\rightarrow} v$ or $w
\stackrel{\pm 1}{\rightarrow} \widetilde{v}$.
\end{enumerate}

\begin{proof}
By induction on $k$. Suppose $k = s$. Properties (D1)-(D3) are
already proved in the remark preceding the claim. Since $v_s$ is
connected to $d$ and is not connected to $v^{(2^0)}, \ldots,
v^{(2^{N-1})}$ the property (D4) is established. To show (D5)
consider the vertex $v_{s+1} \in \CP_{s} \setminus C_s$ and prove
that $\CE(v_s) \ne \CE(v_{s+1})$. Since all vertices leaving
$v_{s+1}$ have termini in $C_{s}$ and $C_s$ is a reduced part of
$\CP_s$ it follows that $\CE(v_{s+1}) = 2^{p_{v_{s+1}}}$ where
$p_{v_{s+1}} = \sum_{e \in Out_{v_{s+1}}} \mu(e) \CE(\beta(e))$ is
a reduced binary sum which does not involve $2^N$ (by D3). As we
showed above $p_{v_s}$ is a reduced binary sum which contains
$2^N$. Therefore, (D5) follows from Lemma
\ref{le:binary_divisibility}. Properties (D6) and (D7) follow from
the description of Algorithm \ref{al:separation}.

Assume that (D1)-(D7) hold for each $k$ such that  $s \leq k < K
\leq t-2$ and show that they hold for $k = K$.

\vspace{3mm}\noindent{\bf (D1)} By induction assumption
vertices $v^{(2^0)},\ldots,v^{(2^{N-1})},v^{(2^N)}$ are
present in $\CP_{K-1}$. Since no vertices are removed while
processing $v_K$ the property (D1) holds for $K$.

\vspace{3mm}\noindent{\bf (D2)} Let $\{ v_{a_1}, \ldots, v_{a_m}\}
\subseteq C_{K-1}$ be a separation sequence for $v_K$. By
induction assumption the vertex $d$ is unmarked in $C_{K-1}$.
Assume, to the contrary, that $d$ is marked in $C_K$. Then $d$
must belong to $\{v_{a_1}, \ldots, v_{a_m}\}$ and, hence,
$\CE(v_K) \le 2^N$. We claim that in this case processing of $v_K$
results in a removal of $v_K$ which will contradict to the
assumption of the claim (no vertices removed).

Indeed, Algorithm \ref{al:separation_total} consequently doubles
$\CE(v_K)$ (using Algorithm \ref{al:separation}) and trims
intermediate results. Consider a step when $\CE(v_K) = 2^N$ and we
double $\CE(v_K)$ to separate it from $\CE(d) = 2^N$. Denote the
circuit before that separation by $\CP_K'$ and after it by
$\CP_K''$. The vertex $d$ is unmarked in $\CP_K'$ and marked in
$\CP_K''$. Therefore, the vertex $v_K$ is unmarked in $\CP_K''$
since $d$ is unmarked in $\CP_K'$ (follows from the description of
$Double$-procedure).

Furthermore, we claim that $v_K$ has no incoming edges in
$\CP_k''$. Indeed, consider two cases. Let $w \in C_{K-1}$. Then,
initially, there is no edge $w \rightarrow v_K$ in $\CP_{K-1}$
(guaranteed by property (C2) for $C_{K-1}$) and, therefore, when
we continuously double the value $\CE(v_K)$ there is no need to
introduce $w \rightarrow v_K$. Assume $w \not \in C_{K-1}$. Then
there is no edge $w \rightarrow d$ by (D3) for $C_{K-1}$.
Therefore, even if the edge $w \rightarrow v_K$ would existed, it
would be removed in $\CP_K''$ (when separating $v_K$ from $d$).

Thus, since $v_K$ is not marked and has no incoming edges in
$\CP_K''$ it will be removed while trimming $\CP_K''$. This
contradicts to the assumption that no vertices are removed.

\vspace{3mm}\noindent{\bf (D3)} There are three cases how a vertex
$w$ can become connected to $d$ while processing $v_K$:
\begin{enumerate}
    \item[1)]
$d$ belongs to the separation sequence of $v_K$ (and $w$ is
connected to $v_K$);
    \item[2)]
$w = v_K$ and a vertex $v$ connected to $d$ belongs to the
separation sequence of $v_K$;
    \item[3)]
$w = v_K$ and a vertex $v$ for which there are edges $v
\stackrel{1}{\rightarrow} v^{(2^0)}, \ldots, v
\stackrel{1}{\rightarrow} v^{(2^{N-1})}$ and no edge $v
\stackrel{1}{\rightarrow} v^{(2^{N})}$ belongs to the separation
sequence of $v_K$.
\end{enumerate}
The first case, as shown in (D2), raises a contradiction.
Therefore, only the vertex $v_K$ can become connected to $d$.
Since it is being added to $C_K$ it does not contradict to (D3).

Furthermore, if a vertex $v_K$ is not connected to $d$ in $\CP_K$
it is not connected to $d$ in each $\CP_k$ ($K\le k\le t-1$).

\vspace{3mm}\noindent{\bf (D4)} As shown in (D3) after the
processing of the vertex $v_K$ the set $In_d$ can increase by at
most one element $v_K \stackrel{\varepsilon}{\rightarrow} d$
(i.e., processing of $v_K$ can connect to $d$ only the vertex
$v_K$). Assume that $v_K$ is connected to $d$ in $\CP_{K+1}$ and
contradicts to (D4). This might happen only in the second case in
the proof of (D3), i.e., some vertex $v$ connected to $d$ belongs
to the separation sequence of $v_K$. Let $v_{a_b}$ be the first
vertex in the separation sequence $\{ v_{a_1}, \ldots, v_{a_m}\}
\subseteq C_{K-1}$ of $v_K$ connected to $d$. We argue (as in the
proof of (D2)) that separation of $v_K$ results in a removal of
$v_K$ from the circuit.

The vertex $v_K$ is not connected in $\CP_{K-1}$ to $d$ by (D3).
Moreover, $v_K$ is not equivalent to any vertex in $\CP_{K-1}$
connected to $d$ by (D5). Therefore, $v_{a_b}$ is not the first
vertex in the separation sequence of $v_K$, it must be preceded by
$\widetilde{v}_{a_b}$ (which by (D6) exists in $C_{K-1}$).
Consider a step of doubling of $\CE(v_K)$ when $\CE(v_K) =
\CE(v_{a_b})$. Denote by $\CP_K'$ the circuit before that step and
by $\CP_K''$ the result of doubling.

The vertex $v_K$ in $\CP_K''$ is unmarked since by (D6) either
$\widetilde{v}_{a_b}$ or $v_{a_b}$ is unmarked in $\CP_{K-1}$.
Also, by (D7) for each $w\in \CP_{K-1}$ there is at most one edge
$w \stackrel{\pm 1}{\rightarrow} \widetilde{v}_{a_b}$ or $w
\stackrel{\pm 1}{\rightarrow} v_{a_b}$. Therefore, in $\CP_K''$
$v_K$ has no incoming edges. Thus, $v_K$ will be removed while
trimming $\CP_K'$. This contradicts to our assumption that no
vertices are removed.

The obtained contradiction implies that a vertex connected to $d$
cannot belong to the separation sequence of $v_K$. Therefore, if
the vertex $v_K$ is connected to $d$ in $\CP_K$ then it cannot be
connected to vertices with smaller $\CE$-values ($2^0, \ldots,
2^{N-1}$) and, hence, $\CE(d) = 2^N$ is the least summand in the
power of $\CE(v_K)$.

\vspace{3mm}\noindent{\bf (D5)} Let $v$ be a vertex connected to
$d$ in $\CP_K$. By (D4) $2^N$ is the least summand in the power of
$\CE(v)$. Hence, if $v_{K+1}$ is equivalent to $v$ then by Lemma
\ref{le:binary_divisibility} $v_{K+1}$ must be connected to a
vertex with $\CE$-value $2^N$. But termini of the edges leaving
$v_{K+1}$ belong to $C_K$. Therefore, $v_{K+1}$ must be connected
to $d$ since it is the only vertex in $C_K$ with $\CE$-value
$2^N$. Contradiction to (D3).

\vspace{3mm}\noindent{\bf (D6)} As shown in the proof of (D3) and
(D4) a vertex can become connected to $d$ only when its separation
sequence contains a vertex $v_{a_b}$ for which there are vertices
$v_{a_b} \stackrel{1}{\rightarrow} v^{(2^0)}, \ldots, v_{a_b}
\stackrel{1}{\rightarrow} v^{(2^{N-1})}$ and there is no edge
$v_{a_b} \stackrel{1}{\rightarrow} d$, and $v_{a_b}$ is the last
element in the sequence. Clearly, $v_{a_b}$ is the half of $v_K$
in $\CP_K$. It is a property of $Double$-procedure that either
$v_K$ or $v_{a_b}$ is unmarked in $\CP_K$.

\vspace{3mm}\noindent{\bf (D7)} Similar to the proof of (D6).

\end{proof}

By (D1) we have all vertices $v^{(2^0)},\ldots,v^{(2^N)}$ in
$\CP_{t-1}$ (where $n\ge 2$). The value of a new auxiliary vertex
must be strictly greater than $2^{N}$ and to introduce a new
auxiliary vertex we need a vertex $v$ for which there are edges $v
\stackrel{1}{\rightarrow} v^{(2^0)}, \ldots, v
\stackrel{1}{\rightarrow} v^{(2^N)}$. But by property (D5) any
vertex connected to $d = v^{(2^N)}$ has $2^N$ as the lowest
summand of its power, so it cannot be connected to the vertices
$v^{(2^{0})}, \ldots, v^{(2^{N-1})}$. Thus, processing of $v_t$
cannot introduce a new auxiliary vertex.

\end{proof}

The estimate $|V(\CP')| \le |V(\CP)|+1$ in the statement of
Proposition \ref{pr:reduction_and_vertices} cannot be
further improved. Figure \ref{fi:new_vertex} gives an
example when $|V(\CP')| = |V(\CP)|+1$.

\begin{proposition} ({\em Complexity of reduction})\label{pr:reduction_complexity}
The complexity of Algorithm \ref{al:reduce} is $O(|V(\CP)|^3)$.
\end{proposition}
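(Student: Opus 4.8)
The plan is to account for the cost of each phase of Algorithm \ref{al:reduce} separately, leaning on the structural invariant established in Proposition \ref{pr:reduction_and_vertices}, namely that the working circuit never carries more than $|V(\CP)|+1$ vertices at any stage. Write $n = |V(\CP)|$. Since the graph has no multiple edges, $|E| \le n(n-1)$, so throughout the computation $|\CP| = O(n^2)$, and, by the invariant, the vertex count stays $O(n)$ and the edge count stays $O(n^2)$. This last point is what keeps the per-vertex work from blowing up, and it is the one place where the bound really has to be earned.

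First I would dispose of the preprocessing steps A)--C). Trimming and the standardization Algorithm \ref{al:remove_redundancies} each run in $O(|\CP|) = O(n^2)$ by Proposition \ref{pr:trimming}, and computing a geometric order is a topological sort, again $O(|\CP|) = O(n^2)$; moreover these steps do not increase the vertex count. All of this is dominated by the main loop and so contributes nothing to the final estimate.

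Next I would bound the work of processing a single vertex $v$ in step E), measuring each substep against a circuit with $O(n)$ vertices and $O(n^2)$ edges (justified by the invariant). Substep E.1) removes opposite and superfluous pairs of edges leaving $v$; as the termini lie in the already-ordered set $C$, this costs $O(|Out_v|) = O(n)$. Substep E.2) locates $v$ in $C$ by binary search, performing $O(\log n)$ comparisons, each costing $O(|C|) = O(n)$ by Lemma \ref{le:compare_vertices} (which invokes the linear-time Algorithm \ref{al:compare_binaries}, cf.\ Proposition \ref{pr:complexity_compare}); hence E.2) is $O(n \log n)$. Substep E.4) inserts $v$ together with at most one auxiliary vertex and updates the order in $O(n)$. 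The dominant substep is E.3), the separation: by the complexity bound already proved for Algorithm \ref{al:separation_total}, it runs in $O(n^2)$. Thus processing a single vertex costs $O(n^2)$.

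Finally I would sum over the loop. Step E) iterates over the vertices $v_3,\ldots,v_n$, at most $n-2$ of them, each processed in $O(n^2)$, for a loop total of $O(n^3)$; adding the $O(n^2)$ preprocessing yields the claimed $O(|V(\CP)|^3)$. The main obstacle, as noted, is ensuring that each separation in E.3) is genuinely $O(n^2)$ rather than degrading as auxiliary vertices accumulate. This is exactly what Proposition \ref{pr:reduction_and_vertices} guarantees: the circuit never grows by more than one vertex overall, and (by the Main assertion in its proof) a new auxiliary vertex cannot appear until a previously introduced one is removed, so the size stays pinned at $O(n)$ for the entire run and every invocation of Algorithm \ref{al:separation_total} is indeed quadratic in $n$.
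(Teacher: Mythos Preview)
Your proof is correct and follows essentially the same step-by-step accounting as the paper's own argument: $O(n^2)$ for preprocessing, and then $O(n)+O(n\log n)+O(n^2)$ per vertex in the main loop, summed over $O(n)$ vertices. If anything, you are more careful than the paper in one respect: you explicitly invoke Proposition \ref{pr:reduction_and_vertices} to pin the working vertex count at $O(n)$ throughout, which is needed to justify that each call to Algorithm \ref{al:separation_total} is genuinely $O(n^2)$ in the \emph{original} $n$; the paper's proof leaves this point implicit.
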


\begin{proof}
Denote by $m$ the number of edges in $\CP$. Observe that
from property (R2) it follows that $m \le n^2$.

We analyze each step in Algorithm \ref{al:reduce}. Trimming and
removing redundancies around zero requires $O(n+m) \le O(n^2)$
steps. The same time complexity is required for computing the
geometric order on $\CP$. The most complicated part is step E).
For each vertex $v_i$:
\begin{enumerate}
    \item[1)]
Removing redundancies requires at most $O(n)$ steps.
    \item[2)]
It takes linear time $O(n)$ to compare two $\CE$-values and it
will take $O(n \log n)$ steps to find a position of $v_i$ in the
current ordered set $C$.
    \item[3)]
Adding $v_i$ into $C$ takes a constant time $O(1)$ to perform.
    \item[4)]
Separation of $v_i$ in $C_i$ requires $O(n^2)$ steps.
\end{enumerate}
Therefore, processing of $v_i$ requires in the worst case $O(n^2)$
steps and processing of all vertices in $\CP$ requires $O(n^3)$
steps. Summing all up we get the result.

\end{proof}

\section{Computing normal forms}
\label{se:normal_form_computing}

In this section we show how to find normal forms of constant power circuits.

\begin{lemma}
For a constant power circuit $\CP$ one can check in time $O(|V(\CP)|^3)$
whether $\CP$ is proper, or not.
\end{lemma}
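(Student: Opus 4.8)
The plan is to reduce the question to sign-checking the ``powers'' of the vertices. Recall that for a non-leaf vertex $v$ of a constant circuit one has $\CE(v) = 2^{p_v}$, where $p_v = \sum_{e \in Out_v} \mu(e)\CE(\beta(e))$, while every leaf of a constant circuit is a zero and so has $\CE$-value $0 \in \MN$. Consequently $\CE(v) \in \MN$ if and only if $p_v$ is a nonnegative integer, and, since $p_v$ is automatically an integer once all children of $v$ have natural $\CE$-values, the circuit $\CP$ is proper if and only if $p_v \ge 0$ for every vertex $v$. Thus it suffices to verify, bottom-up, that no power is negative.

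First I fix a geometric order $v_1,\dots,v_n$ on $V(\CP)$ (which exists for any circuit), so that the children of every vertex precede it. I then process the vertices in this order, maintaining --- exactly as in the reduction process of Section \ref{se:reduction} --- a set $C$ of already-verified vertices, kept sorted by $\CE$-value together with the sequence recording which consecutive $\CE$-values are doubles of one another. When a vertex $v_k$ is reached, all of its children lie in $C$; provided they are proper (otherwise the algorithm has already halted), the exponents occurring in the binary sum $p_{v_k} = \sum_{e \in Out_{v_k}} \mu(e)\CE(\beta(e))$ are precisely the powers of those children, and the ordering and adjacency data stored in $C$ (together with the ties among children, which let one first combine equal exponents) are exactly what is needed to run Algorithm \ref{al:compare_binaries} --- compare $p_{v_k}$ against the empty sum of value $0$ --- just as in the proof of Lemma \ref{le:compare_vertices}. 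If this comparison returns $p_{v_k} < 0$, I output ``not proper'' and stop; otherwise $\CE(v_k) = 2^{p_{v_k}} \in \MN$, and I insert $v_k$ into $C$, locating its position and testing adjacency to its neighbours by binary search, again through Algorithm \ref{al:compare_binaries}. If every vertex survives, $\CP$ is proper. Correctness is a straightforward induction along the geometric order.

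For the running time, computing a geometric order costs $O(n+m)$, and since $\CP$ has no multiple edges we have $m \le n(n-1) < n^2$. Each comparison of two $\CE$-values is a comparison of binary sums of length at most $|C| \le n$, hence takes $O(n)$ time by Proposition \ref{pr:complexity_compare}; testing the sign of $p_{v_k}$ is one such comparison, and inserting $v_k$ into the sorted set $C$ costs $O(n \log n)$ via binary search. Summed over the $n$ vertices this is $O(n^2 \log n) = O(n^3)$, matching the bound of Proposition \ref{pr:reduction_complexity}.

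The main obstacle is that the actual $\CE$-values are astronomically large (up to towers of exponentials), so the powers cannot be evaluated or compared directly; everything must be done symbolically on the binary sums representing the powers, using only the relative order and the doubles structure recorded in $C$. The one point requiring care is that Algorithm \ref{al:compare_binaries} presupposes nonnegative exponents: this is guaranteed precisely because I halt at the first vertex of negative power, so every binary sum ever examined is built from children whose $\CE$-values are genuine powers $2^{q}$ with $q \in \MN$, keeping the comparison machinery applicable throughout.
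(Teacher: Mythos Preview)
Your overall strategy---walk up a geometric order and test $p_v \ge 0$ by comparing binary sums---is exactly the idea behind the paper's one-line proof, which simply says ``run $\Reduce$; properness is checked implicitly''. The difference is that the paper leans on the \emph{full} reduction algorithm, including the separation step, whereas your time analysis ($O(n\log n)$ per insertion, $O(n^2\log n)$ total) and your remark about ``ties among children'' make clear that you are \emph{not} separating.

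That is where the gap is. Lemma~\ref{le:compare_vertices}, which you invoke, relies on property~(C1): the vertices already in $C$ have pairwise distinct $\CE$-values, so that $p_v$ and $p_u$ are automatically \emph{reduced} binary sums. Without separation, (C1) fails; two children of $v_k$ can share the same $\CE$-value, and your proposed fix ``first combine equal exponents'' does not go through with only the sorted-order and doubles data. Combining, say, $+2^{t}+2^{t}$ yields a term $2^{t+1}$ whose exponent need not be the power of any vertex in $C$; to continue Algorithm~\ref{al:compare_binaries} you must decide whether this virtual exponent equals, or differs by one from, the other exponents in the sum---information that the doubles sequence $d_1,\dots,d_{m-1}$ does not supply once gaps exceed~$1$. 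In short, the comparison machinery presupposes reduced sums, and your algorithm does not guarantee them.

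The easy repair is to include the separation step (Algorithm~\ref{al:separation_total}) when inserting each $v_k$ into $C$, at cost $O(n^2)$ per vertex; this restores (C1), keeps all sums reduced, and still fits in the claimed $O(n^3)$ bound. With that addition your argument coincides with the paper's.
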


\begin{proof}
By definition $\CP$ is proper if and only if $\CE(v) \in \MN$ for every $v\in V(\CP)$.
This is implicitly checked in the reduction process that requires $O(|V(\CP)|^3)$
operations.
\end{proof}

\begin{theorem}\label{th:normal_cp_number}
There exists a procedure which for any $n \in\MN$ computes
the unique constant normal power circuit $\CP_n$ representing $n$
in time $O(\log_2 n \log_2 \log_2 n)$.
Furthermore, the circuit $\CP_n$ satisfies $|V(\CP_n)| \le \lceil\log_2 n\rceil +2$.
\end{theorem}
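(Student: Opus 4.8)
The plan is to build $\CP_n$ explicitly from the compact binary form of $n$ by recursing on exponents, and then to verify that the resulting circuit is already normal, so that no separate reduction is needed. First I would invoke Lemma \ref{le:compact_sum} to compute, in time $O(\log_2 n)$, the unique compact binary sum $n = \sum_{i=1}^{k}\varepsilon_i 2^{q_i}$ with $q_1 < \dots < q_k \le \log_2 n$. Each marked vertex of $\CP_n$ should be a vertex $v^{(2^{q_i})}$ carrying sign $\varepsilon_i$, so I need vertices realizing the powers $2^{q_i}$. Since $\CE(v^{(2^q)}) = 2^q$ forces the outgoing sum of $v^{(2^q)}$ to equal $q$, I encode $q$ again by its compact binary sum and attach edges to the vertices realizing the powers appearing in it; the constant $1 = 2^0$ is realized by a single vertex pointing to the unique zero vertex $z$. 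Formally, let $S$ be the closure of $\{q_1,\dots,q_k\}$ under the map sending an exponent to the exponents occurring in its compact binary sum. Because these are strictly smaller (they are $\le \log_2 q < q$), the recursion terminates and $S$ is well defined; moreover $0\in S$, since the least element of $S$ cannot have a nonempty compact form with strictly smaller exponents and hence must be $0$. I create one vertex $v^{(2^s)}$ for each $s\in S$ together with $z$, wire each $v^{(2^s)}$ to its compact-form children (and $v^{(2^0)}$ to $z$), mark each $v^{(2^{q_i})}$ with $\varepsilon_i$, and output the result.

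Checking normality is then mostly bookkeeping. The circuit is proper, as every value is a positive power of $2$; distinct vertices carry distinct values by construction, giving (R2); because compact sums have pairwise distinct exponents differing by at least $2$, there are neither redundant nor superfluous edges, giving (R3); there is a unique unmarked zero $z$ with no redundant zero edge (only $v^{(2^0)}$ points to $z$, and $|Out_{v^{(2^0)}}| = 1$), and every vertex descends from a marked one by construction of $S$, so the circuit is standard and hence reduced. Finally (N2) and (N3) hold because every outgoing sum, and the top sum $\sum_{v\in M}\nu(v)\CE(v)$, is exactly a compact binary sum.

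For the vertex bound I would prove by induction on the exponent that every non-zero vertex of $\CP_n$ has value $2^{j}$ with $0\le j\le \lceil\log_2 n\rceil$. The marked vertices have exponents $q_i\le\log_2 n$, and a vertex of exponent $q\ge 1$ has children of exponent $r\le\log_2 q<q$, so all exponents stay in $\{0,\dots,\lceil\log_2 n\rceil\}$. Since $\CP_n$ is reduced, the non-zero vertices have pairwise distinct values, hence there are at most $\lceil\log_2 n\rceil+1$ of them; adding $z$ gives $|V(\CP_n)|\le\lceil\log_2 n\rceil+2$.

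The time bound comes from computing compact forms only over this bounded range. Put $L=\lceil\log_2 n\rceil$. The top compact form costs $O(L)$. To generate $S$ and the edges I process each exponent $s\le L$ at most once, maintaining a boolean table on $\{0,\dots,L\}$ to avoid recomputation, and computing the compact form of $s$ costs $O(\log_2 s)=O(\log_2 L)$ by Lemma \ref{le:compact_sum}. As $|S|\le L+1$, this totals $O(L\log_2 L)=O(\log_2 n\,\log_2\log_2 n)$, which dominates the assembly of the $O(L\log_2 L)$ edges and the $k\le L$ marks. Uniqueness of $\CP_n$ is then immediate from the theorem that two normal circuits are equivalent if and only if they are isomorphic, since any normal circuit representing $n$ is equivalent to the one constructed. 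I expect the vertex bound to be the main obstacle: its proof hinges on the induction above together with the compact-form estimate $r\le\log_2 q$ and on reducedness to rule out repeated values, while getting the memoization right over the range $\{0,\dots,L\}$ is what secures the $\log_2\log_2 n$ factor in the running time rather than a second factor of $\log_2 n$.
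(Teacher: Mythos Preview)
Your proposal is correct and follows essentially the same approach as the paper: represent each exponent by its compact binary form and recurse, producing vertices $v^{(2^s)}$ wired according to these compact forms, then mark according to the compact form of $n$. The only cosmetic difference is that the paper first creates all vertices $2^0,\dots,2^{\lceil\log_2 n\rceil}$ and then trims, whereas you build directly the closure $S$ (which is precisely the set surviving the trim); your verification of normality is also more explicit than the paper's, which simply asserts it.
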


\begin{proof}
We construct a circuit for $n$ explicitly.
Put $k = \lceil\log_2 n\rceil$ and $V = \{0,2^0,2^1,\ldots,2^{k}\}$.
Define the set of labeled directed edges on $V$
    $$E = \{2^q \stackrel{\varepsilon}{\rightarrow} 2^s \mid \mbox{the compact sum for $q$ involves } \varepsilon 2^s\}
    \cup \{2^0\stackrel{1}{\rightarrow} 0\}.$$
If $\varepsilon_1 2^{q_1}+\ldots+\varepsilon_k 2^{q_k}$
is a compact binary sum for $n$ then put $M = \{2^{q_1},\ldots,2^{q_k}\}$ and $\nu(2^{q_i}) = \varepsilon_i$.
Trim the obtained power circuit.
Denote the constructed circuit by $\CP_n$.
It follows from construction that the obtain power circuit $\CP_n$ is normal and $\CE(\CP_n) = n$.
Also, it follows from the construction that $|V(\CP_n)| \le k +2$
and $|E| \le k\log_2 k$. Furthermore, it is straightforward to find the set $E$.
Thus, the time complexity of the described procedure is $O(k \log_2 k)$.
\end{proof}

\begin{theorem}\label{th:normal_circuit}
There exists an algorithm which for
a given constant proper power circuit $\CP$
computes the unique (up to isomorphism)
equivalent proper normal power circuit $\CP'$ in time $O(|V(\CP)|^3)$.
Furthermore, $|V(\CP')| \le 2|V(\CP)|$.
\end{theorem}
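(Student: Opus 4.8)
The plan is to produce $\CP'$ in two stages matching the two groups of normal-form axioms: first secure (N1) by reducing, then secure (N2) and (N3) by compactifying every out-sum and the marked sum, all the while keeping the vertex count under control. First I would run Algorithm \ref{al:reduce} (Theorem \ref{th:reduction}) on $\CP$ to get an equivalent reduced circuit $\CP_1$ with $|V(\CP_1)| \le |V(\CP)|+1$ in time $O(|V(\CP)|^3)$. Since $\CP$ is proper and reduction leaves every $\CE(v)$ a natural number (the only value it alters is doubled, hence still natural), $\CP_1$ is proper, so (N1) holds. The point of starting here is that in a reduced circuit the out-sum $\sum_{e\in Out_v}\mu(e)\CE(\beta(e))$ of each vertex $v$ is \emph{already} a reduced binary sum with no superfluous pairs (by (R2) and (R3)), and the same is true of the marked sum $\sum_{v\in M}\nu(v)\CE(v)$; what is left is merely to replace each of these reduced sums by its unique equivalent compact form from Lemma \ref{le:compact_sum}.

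Second, I would traverse the vertices in geometric order and rewrite each out-sum into compact form via the rewriting system $\mathcal{C}$ of Lemma \ref{le:compact_sum}. The only rule that can fire in a reduced circuit is the carry $2^{m+1}+2^{m}\to 2^{m+2}-2^{m}$ (and its negative), since the repeat and superfluous rules are already excluded by (R2), (R3). Such a carry asks for an edge to a vertex of value $2^{m+2}$; when that vertex is absent I create it as the \emph{double} of the already-present vertex of value $2^{m+1}$ (a local exponent-plus-one construction as in Algorithm \ref{al:separation}). The crucial ingredient is Lemma \ref{le:binary_to_compact}: every power occurring in the compact form of a reduced sum is either an old power or the double of an old power, and by the second clause of that lemma this remains true after the ``$+1$'' used to build a double. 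Consequently the value set closes up after adjoining, for each pre-existing value, at most its double; the newly created vertices inject into a subset of the old ones (and the zero vertex contributes none), which yields $|V(\CP')|\le 2|V(\CP)|$. Compactifying an out-sum preserves every $\CE(v)$ and the new doubles carry genuine natural values, so $\CP'$ remains proper, reduced, and equivalent to $\CP$; I finish by compactifying the top marked sum to obtain (N3). Uniqueness up to isomorphism is then immediate from the normal-form theorem that equivalent normal circuits are isomorphic, and equivalence from value-preservation. For the running time, reduction costs $O(|V(\CP)|^3)$; compactifying one out-sum is linear in its length by Lemma \ref{le:compact_sum}(3) and creating one double costs $O(|V|)$, so the compactification phase is $O(|V|^2)$, dominated by reduction, giving the claimed $O(|V(\CP)|^3)$.

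The step I expect to be the main obstacle is precisely the linear bound on vertex growth. A priori a single carry can cascade upward through a vertex's out-sum, and each double I create to absorb a carry could itself demand a further double, so naively the process might manufacture an unbounded chain of new powers and destroy the cubic-time estimate. The whole argument therefore hinges on both clauses of Lemma \ref{le:binary_to_compact}, which confine all powers ever needed to the set $\{\text{old values}\}\cup\{\text{their doubles}\}$ and show this set is closed under the ``compactify, then add one'' operation used to build a double. I would also need to verify the complementary bookkeeping fact that introducing a double never collides with an existing vertex value nor reintroduces a redundant or superfluous pair, so that the reducedness established in the first stage survives the compactification of the second.
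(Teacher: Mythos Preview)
Your proposal is correct and follows essentially the same route as the paper: reduce first to secure (N1), then use Lemma \ref{le:binary_to_compact} to bound the extra vertices needed for compactification by one double per existing vertex, yielding the $2|V(\CP)|$ bound and the cubic running time dominated by reduction. The only difference is an ordering choice: the paper first introduces \emph{all} missing doubles in geometric order (noting that Algorithm \ref{al:separation} then never needs its auxiliary-vertex step B.2, since the required $v^{(2^N)}$ is the double of a child already processed) and only afterwards compactifies every out-sum and the marked sum; this up-front pass cleanly dissolves exactly the cascade worry you flagged as your ``main obstacle,'' whereas your on-demand creation of doubles reaches the same conclusion but requires the recursive closure argument you sketch.
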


\begin{proof}
{\bf (Step A)} Compute $\CP' = \Reduce(\CP)$.
The reduction procedure orders the set
$V(\CP') = \{v_1,\ldots,v_n\}$ so that $\CE(v_i) < \CE(v_{i+1})$.
Also, it provides us with a sequence
$d_1,\ldots,d_{n-1}$ of $0$'s and $1$'s satisfying $d_i = 1$ if and only if
$2\CE(v_{i}) = \CE(v_{i+1})$.
Since $\CP'$ is reduced, it follows that the sum
$\CE(\CP') = \sum_{v\in M} \nu(v) \CE(v)$
is reduced
and for every vertex $v\in V(\CP')$ the sum $\sum_{e\in Out_{v}} \mu(e)\CE(\beta(e))$
is reduced. Our goal is to make these sums compact.
By Lemma \ref{le:binary_to_compact} to make these sums
compact we might need to introduce doubles for some vertices in $V(\CP)$.
We do it next.

{\bf (Step B)}
Let $\{v_{a_1},\ldots,v_{a_n}\}$ be a geometric order on $V(\CP)$.
For every vertex $v_{a_i}$ (from smaller indices to larger) such that $d_{a_i} = 0$
introduce its double, i.e., add a new vertex $v_{a_i}'$
and add edges so that $\CE(v_{a_i}') = 2\CE(v_{a_i})$
as described in Algorithm \ref{al:separation}.
It very important to note that Algorithm \ref{al:separation}
never performs step B.2) (and hence does not introduce new auxiliary vertices)
because the vertex $v^{2^N}$ in the description of Algorithm \ref{al:separation}
is a double of some vertex $v_{a_j}$ and it is already introduced.

{\bf (Step C)}
Next we use the procedure described in Lemma \ref{le:compact_sum}
to make sure that for every vertex $v \in V(\CP)$
the binary sum $\sum_{e\in Out_{v}} \mu(e)\CE(\beta(e))$
is compact.
Since the doubles were introduced to $\CP'$
it follows from Lemma \ref{le:binary_to_compact} that this can be done.

{\bf (Step D)}
To make the sum $\CE(\CP) = \sum_{v\in M} \nu(v) \CE(v)$ compact we
change $M$ and $\nu$ as described in Lemma \ref{le:compact_sum}.
By Lemma \ref{le:binary_to_compact} we can do that.

{\bf (Step E)}
Finally, we trim the obtained power circuit and output the result.l

The reduction step is the most time consuming step which requires $O(|V(\CP)|^3)$
steps. Hence the claimed bound on time complexity.
\end{proof}

\section{Elementary operations over power circuits}
\label{se:operations}

In this section we show how to efficiently perform arithmetic
operations over power circuits.

\subsection{Addition and subtraction}
\label{se:addition}

Let $\CP_1$ and $\CP_2$ be two circuits. The following algorithm
computes a circuit $\CP_+$ such that $\CT_{\CP_{+}} = \CT_{\CP_1} +
\CT_{\CP_2}$ over $\mathbb{Z}$ (or $\mathbb{R}$).

\begin{algorithm} \label{al:sum_pp}{\em (Sum of circuits)}
        \\{\sc Input.}
Circuits $\CP_1 = (\CP_1,M_1,\mu_1,\nu_1)$ and $\CP_2 =
(\CP_2,M_2,\mu_2,\nu_2)$.
    \\{\sc Output.}
Circuit $\CP_+ = (\CP_+,M,\mu,\nu)$ such that $\CT_{\CP_{+}} = \CT_{\CP_1} +
\CT_{\CP_2}$ over $\mathbb{Z}$.\\
{\sc Computations.}
\begin{enumerate}
 \item[A)] Let $\CP_+$ be a disjoint union of graphs $\CP_1$ and $\CP_2$.
 \item[B)] Put $M = M_1 \cup M_2$.
 \item[C)] Define a function $\nu$ on $M$ such that $\nu|_{M_{1}} =
 \nu_{1}$ and $\nu|_{M_{2}} = \nu_{2}$.
 \item[D)] Define a function $\mu$ on $E(\CP_+)$ such that $\mu|_{E(\CP_1)} =
 \mu_1$ and $\mu|_{E(\CP_2)} = \mu_2$
 \item[E)] Return $\CP_+ = (\CP_+,M,\mu,\nu)$.
\end{enumerate}
\end{algorithm}

\begin{figure}[h]
\centerline{ \includegraphics[scale=0.5]{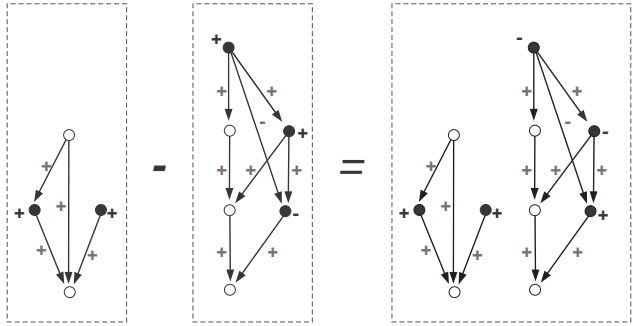} }
\caption{\label{fi:difference} Difference of circuits.}
\end{figure}

\begin{proposition} \label{pr:complexity_add}
Let $\CP_1$ and $\CP_2$ be power circuits.  Then
  \begin{enumerate}
 \item [1)]  $\CT_{\CP_{+}} = \CT_{\CP_1} +
\CT_{\CP_2}$ over $\mathbb{Z}$,
 \item [2)] Algorithm \ref{al:sum_pp} computes $\CP_+$
in linear time $O(|\CP_1| + |\CP_2|)$.
  \item [3)] Moreover,  the size of $\CP_+ = (\CP_+,M,\mu,\nu)$ is bounded as follows:
    \begin{itemize}
    \item  $|V(\CP_+)| = |V(\CP_1)| + |V(\CP_2)|$,
     \item $|E(\CP_+)| = |E(\CP_1)| + |E(\CP_2)|$,
     \item $|M| = |M_1| + |M_2|$.
 \end{itemize}
\end{enumerate}
\end{proposition}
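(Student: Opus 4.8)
The plan is to treat the three parts in order, with part~1) carrying essentially all of the (minimal) content and parts~2) and~3) following by inspection of the construction.

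For part~1), the key observation is that $\CP_+$ is a \emph{disjoint} union, so $V(\CP_1)$ and $V(\CP_2)$ are disjoint and there are no edges between them. First I would argue that for every vertex $v \in V(\CP_1)$ the term $t_v$ computed in $\CP_+$ coincides with the term $t_v$ computed in $\CP_1$ (and symmetrically for $\CP_2$). This follows by induction along the acyclic structure of $\CP_+$: the defining recursion for $t_v$ refers only to the termini $\beta(e)$ of the edges $e \in Out_v$, and since $\CP_+$ is a disjoint union all such termini lie in $V(\CP_1)$ whenever $v \in V(\CP_1)$. The labels $\mu(e)$ on these edges agree with $\mu_1(e)$ by construction (step D), and the base case of leaves is immediate, so the recursion for $t_v$ unfolds identically in $\CP_+$ and in $\CP_1$. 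Once term values are known to be preserved, the conclusion follows by splitting the marked-vertex sum: since $M = M_1 \sqcup M_2$ (a disjoint union, again by construction) and $\nu$ restricts to $\nu_1$ on $M_1$ and to $\nu_2$ on $M_2$ (step C), we obtain
$$\CT_{\CP_+} = \sum_{v\in M} \nu(v) t_v = \sum_{v\in M_1} \nu_1(v) t_v + \sum_{v\in M_2} \nu_2(v) t_v = \CT_{\CP_1} + \CT_{\CP_2}.$$

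For part~3), all three size equalities are immediate consequences of disjointness: a disjoint union of graphs has vertex set and edge set equal to the disjoint unions of the respective vertex and edge sets, giving $|V(\CP_+)| = |V(\CP_1)| + |V(\CP_2)|$ and $|E(\CP_+)| = |E(\CP_1)| + |E(\CP_2)|$; and since $M_1, M_2$ are disjoint, $|M| = |M_1| + |M_2|$. For part~2), I would simply observe that Algorithm~\ref{al:sum_pp} copies the two graphs and extends the labeling data $\mu$, $\nu$ (and implicitly $M$ and $\gamma$) by restriction, each operation touching every vertex and edge a constant number of times; hence the running time is $O(|\CP_1| + |\CP_2|)$.

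The only point requiring any genuine care --- and it is the ``main obstacle'' only in a very mild sense --- is the inductive claim in part~1) that term values are unaffected by the ambient circuit. This is precisely where disjointness is used: if the two circuits shared vertices, a vertex of one could reach vertices of the other and the equality $\CT_{\CP_+} = \CT_{\CP_1} + \CT_{\CP_2}$ could fail. I would make the induction explicit by taking a geometric order on $V(\CP_+)$ (which exists for any circuit), in which every edge points to an earlier vertex, and proving $t_v^{\CP_+} = t_v^{\CP_i}$ for $v \in V(\CP_i)$ by induction along this order.
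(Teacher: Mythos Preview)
Your proposal is correct and follows the same approach as the paper, which simply says ``Straightforward from the construction of $\CE(\CP_+)$ in Algorithm~\ref{al:sum_pp}.'' You have merely unpacked what ``straightforward'' means here --- the inductive preservation of $t_v$ under disjoint union and the splitting of the marked-vertex sum --- so your argument is a fleshed-out version of the paper's one-line proof.
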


\begin{proof}
Straightforward from the construction of  $\CE(\CP_+)$ in Algorithm \ref{al:sum_pp}.
\end{proof}

A similar result holds for subtraction $-$. To compute $\CP_- = \CP_1 - \CP_2$ one can modify Algorithm
\ref{al:sum_pp} as follows. At step C) instead of putting
$\nu|_{M_{2}} = \nu_{2}$ put $\nu|_{M_{2}} = -\nu_{2}$. Clearly,
for the obtained circuit $\CP_-$ the equality $\CT_{\CP_{-}} = \CT_{\CP_1} -
\CT_{\CP_2}$ over $\mathbb{Z}$, as wells as the complexity and size estimates of Lemma
\ref{pr:complexity_add} hold.

Sometimes we refer to the circuits $\CP_+$ and $\CP_-$ as  $\CP_1 + \CP_2$  and $\CP_1 - \CP_2$, correspondingly.

\subsection{Exponentiation}

Let $\CP$ be a power circuit. The next algorithm produces a
circuit $\CP'$ such that $\CT_{\CP'} = 2^{\CT_\CP}$.

\begin{algorithm}\label{al:exponent}({\em Exponentiation in base  $2$})
      \\{\sc Input.}
A circuit $\CP = (\CP,M,\mu,\nu)$.
    \\{\sc Output.}
A circuit $\CP' = (\CP', M',\mu',\nu')$ such that $\CT_{\CP'} = 2^{\CT_\CP}$.
    \\{\sc Computations:}
\begin{enumerate}
    \item[1)] Construct a graph $\CP'$ as follows:
\begin{itemize}
 \item Add a new unmarked vertex $v_0$ into the graph $\CP$.
    \item
For each $u \in M$ add an edge $e = (v_0 \rightarrow u)$.
\end{itemize}
    \item[2)] Put $M = \{ v_0 \}$.
\item [3)] Define   $\nu'(v_0) = 1$.
    \item[4)] Extend $\mu$ to $\mu'$  defining $\mu'$ on new edges $( v_0,u)$ by  $\mu'( v_0,u) = \nu(u)$.
 \item [5)] Output $(\CP,M,\mu,\nu)$.
\end{enumerate}
\end{algorithm}

See Figure \ref{fi:power_of_two} for an example.

\begin{figure}[htbp]
\centerline{
\includegraphics[scale=0.5]{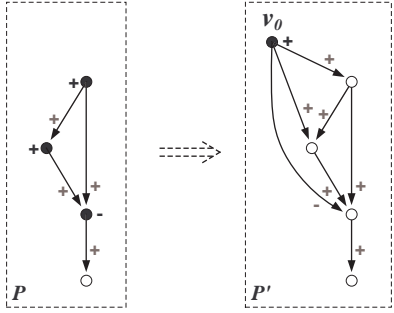} }
\caption{\label{fi:power_of_two} Exponentiation in base  $2$. }
\end{figure}

\begin{proposition} \label{pr:complexity_exponent}
Let $\CP$ be a power circuit. Then
  \begin{enumerate}
 \item [1)] $\CT_{\CP'} = 2^{\CT_\CP}$,
 \item [2)]  Algorithm
\ref{al:exponent} computes $\CP'$ in linear time $O(|\CP|)$.
  \item [3)] Moreover,  the size of $\CP' = (\CP', M',\mu',\nu')$ is bounded as follows:
    \begin{itemize}
    \item  $|V(\CP')| = |V(\CP)| + 1$,
     \item $|E(\CP')| \le |E(\CP)| + |V(\CP)|$,
     \item $|M'| = 1$.
 \end{itemize}
\end{enumerate}

\end{proposition}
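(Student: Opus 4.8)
The plan is to verify the three claims by direct inspection of Algorithm \ref{al:exponent}, since the construction is purely local: a single fresh vertex $v_0$ is attached above the former marked vertices, and nothing below them is touched.

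First I would confirm that $\CP'$ is a legitimate power circuit. The only new vertex $v_0$ has in-degree zero and every added edge emanates from it, so no directed cycle can be created and $\CP'$ remains acyclic. Because the marked vertices of $\CP$ are pairwise distinct, $v_0$ receives at most one edge to each of them, so $\CP'$ contains no multiple edges. Since $M$ is non-empty (part of the definition of a power circuit), we have $Out_{v_0} \ne \emptyset$, hence $v_0$ is not a leaf and the inductive term $t_{v_0}$ is governed by the exponential clause in the definition of $\CT_\CP$.

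Next, for part 1) the crucial observation is that adding $v_0$ together with the edges leaving $v_0$ does not alter the outgoing-edge set $Out_u$ of any old vertex $u \in V(\CP)$. Consequently the terms $t_u$ computed in $\CP'$ coincide with those computed in $\CP$ for every $u \in V(\CP)$. Applying the definition of the term at $v_0$ and using $\mu'(v_0,u)=\nu(u)$ gives
$$t_{v_0} = 2^{\sum_{e \in Out_{v_0}} \mu'(e) t_{\beta(e)}} = 2^{\sum_{u \in M} \nu(u)\, t_u} = 2^{\CT_\CP}.$$
Since $M' = \{v_0\}$ and $\nu'(v_0)=1$, it follows that $\CT_{\CP'} = \nu'(v_0)\, t_{v_0} = 2^{\CT_\CP}$, as required; as this is an identity of terms, the induced functions in $\tilde Z$ agree as well.

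Finally, the size bounds and the complexity are read off the construction. Exactly one vertex is added, so $|V(\CP')| = |V(\CP)| + 1$; exactly $|M|$ edges are added, and since $|M| \le |V(\CP)|$ this yields $|E(\CP')| = |E(\CP)| + |M| \le |E(\CP)| + |V(\CP)|$; and $M' = \{v_0\}$ gives $|M'| = 1$. Each step of the algorithm (introducing one vertex, scanning $M$ once to add the edges and set $\mu'$, resetting $M'$ and $\nu'$) costs $O(|\CP|)$, so the whole procedure runs in linear time $O(|\CP|)$. I do not expect any genuine obstacle here: the statement is an immediate consequence of the definition of $\CT_\CP$ together with a bookkeeping count. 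The only point deserving a moment's care is recording that the out-neighborhoods of the original vertices are untouched, which is exactly what guarantees the exponent equals $\CT_\CP$ rather than some perturbed sum.
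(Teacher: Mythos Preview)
Your argument is correct and follows the same approach as the paper: both proofs verify $\CT_{\CP'} = 2^{\CT_\CP}$ directly from the definition $\CT_\CP = \sum_{v\in M} \nu(v) t_v$ and read the size bounds off the construction. Your version is simply more detailed, explicitly checking that $\CP'$ remains a valid power circuit and that the out-sets of old vertices are unchanged, points the paper leaves implicit.
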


\begin{proof}
Recall that $\CT_\CP = \sum_{v\in M} \nu(v) t_v$. Therefore,
$2^{\CT_\CP} = 2^{\sum_{v\in M} \nu(v) t_v}$ which is exactly
the term  $\CT_{\CP'}$. The other statements follow from the
constructions in  Algorithm \ref{al:exponent}.

\end{proof}

Sometimes we refer to the circuit $\CP'$ as $2^\CP$.

\subsection{Multiplication}
\label{se:multiplication}

Let $\CP_1$ and $\CP_2$ be two power circuits. In this section we
construct a power circuit  $\CP_\ast$ such
that $\CT_{\CP_\ast} = \CT_{\CP_1} \cdot \CT_{\CP_2}$.

\begin{algorithm} \label{al:product_pp}{\em (Product of circuits)}\\
{\sc Input.} Circuits $\CP_1$ and $\CP_2$.\\
{\sc Output.} A circuit $\CP_\ast$ such that $\CT_{\CP_\ast} = \CT_{\CP_1} \cdot \CT_{\CP_2}$ in any exponential ring $R$.\\
{\sc Computations.}
\begin{enumerate}
    \item[A)]
Apply Algorithm \ref{al:marked_origins} to get power circuits  $\CP_1'$ and $\CP_2'$, which are equivalent to $\CP_1$ and $\CP_2$ and where all marked vertices are sources.
    \item[B)]
Construct $\CP = (V(\CP),E(\CP))$, where
$$V(\CP) ~=~ (V(\CP_1') \smallsetminus M(\CP_1')) ~~\cup~~ (V(\CP_2') \smallsetminus M(\CP_2')) ~~\cup~~ M(\CP_1') \times M(\CP_2').$$
and $E(\CP)$ contains edges of three types:
\begin{enumerate}
\item[1)] for each edge $v_1 \stackrel{x}{\rightarrow} v_2$ in
$\CP_i'$ such that $v_1,v_2 \in V(\CP_i') \smallsetminus
M(\CP_i')$ ($i=1,2$) add an edge $v_1 \stackrel{x}{\rightarrow}
v_2$ into $\CP$;
    \item[2)]
for each edge $v_1 \stackrel{x}{\rightarrow} v_2$ in
$\CP_1'$, where $v_1$ is marked and $v_2$ is not, and for each
vertex $v_3 \in M(\CP_2')$ add an edge $(v_1,v_3)
\stackrel{x}{\rightarrow} v_2$ into $\CP$;
    \item[3)]
for each edge $v_1 \stackrel{x}{\rightarrow} v_2$ in
$\CP_2'$, where $v_1$ is marked and $v_2$ is not, and for each
vertex $v_3 \in M(\CP_1')$ add an edge $(v_3,v_1)
\stackrel{x}{\rightarrow} v_2$ into $\CP$.
\end{enumerate}
    \item[C)]
Put $M = M(\CP_1') \times M(\CP_2')$ and for each $v = (v_1,v_2)
\in M$ put $\nu(v) = \nu(v_1) \nu(v_2)$.
    \item[D)]
Output the obtained circuit $(\CP,M,\mu,\nu)$.
\end{enumerate}
\end{algorithm}

\begin{figure}[htbp] \centerline{
\includegraphics[scale=0.5]{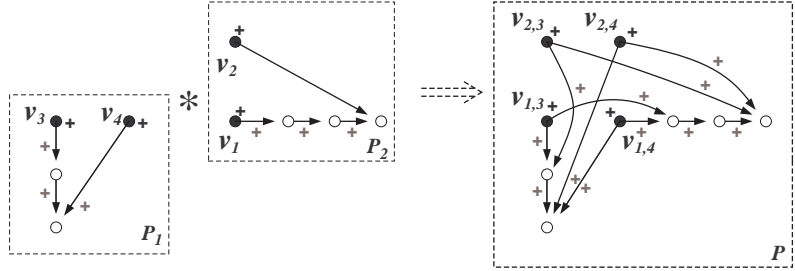} }
\caption{\label{fi:figure_16} Circuit multiplication.}
\end{figure}

\begin{proposition}
Let $\CP_1$ and $\CP_2$ be two power circuits and $\CP_\ast$ obtained from them by Algorithm \ref{al:product_pp}. Then:

 \begin{itemize}
\item $\CT_{\CP_\ast} = \CT_{\CP_1} \cdot \CT_{\CP_2}$,
\item $$|V(\CP_\ast)| \le |V(\CP_1)| + |V(\CP_2)| + |M(\CP_1)| \cdot |M(\CP_2)|$$
and
$$|E(\CP_\ast)| \le 2(|V(\CP_1)| + |V(\CP_2)| ) \cdot |V(\CP_1)| \cdot |V(\CP_2)|.$$
\item  Algorithm \ref{al:product_pp} computes $\CP_\ast$ in at
most cubic time $O(|V(\CP_1)| \cdot |V(\CP_2)| \cdot (|V(\CP_1)| +
|V(\CP_2)|) )$.
\end{itemize}
\end{proposition}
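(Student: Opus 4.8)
The plan is to prove the three assertions in order, with correctness resting on the distributive law together with the structural guarantee provided by step A. After step A, Lemma \ref{le:source} gives circuits $\CP_1'$, $\CP_2'$ strongly equivalent to $\CP_1$, $\CP_2$ in which every marked vertex is a source. The crucial consequence is that \emph{no} edge can terminate at a marked vertex, so every edge of $\CP_i'$ has a non-marked terminus; hence the non-marked vertices of $\CP_i'$ form a self-contained subcircuit whose vertex-terms $t_w$ are unaffected by the construction of $\CP$. I would then read off from step B that a product vertex $(a,b)\in M(\CP_1')\times M(\CP_2')$ inherits exactly the out-edges of $a$ (type 2) together with the out-edges of $b$ (type 3), all pointing to preserved non-marked vertices. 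Writing $t_a=2^{p_a}$, $t_b=2^{p_b}$ with $p_a=\sum_{e\in Out_a}\mu(e)t_{\beta(e)}$ and likewise $p_b$, this gives $p_{(a,b)}=p_a+p_b$ and therefore $t_{(a,b)}=2^{p_a+p_b}=t_a t_b$. Combining with the sign rule $\nu(a,b)=\nu(a)\nu(b)$ of step C yields
\[
\CT_{\CP_\ast}=\sum_{(a,b)}\nu(a)\nu(b)\,t_a t_b=\Bigl(\sum_{a}\nu(a)t_a\Bigr)\Bigl(\sum_{b}\nu(b)t_b\Bigr)=\CT_{\CP_1'}\cdot\CT_{\CP_2'}=\CT_{\CP_1}\cdot\CT_{\CP_2},
\]
the last equality by Lemma \ref{le:source}. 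The argument is purely term-algebraic, so it holds in any exponential ring.

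Next I would handle the vertex count. By step B, $|V(\CP_\ast)|=|V(\CP_1')\setminus M(\CP_1')|+|V(\CP_2')\setminus M(\CP_2')|+|M(\CP_1')|\cdot|M(\CP_2')|$. Algorithm \ref{al:marked_origins} leaves $|M(\CP_i')|=|M(\CP_i)|$ and creates one extra unmarked vertex per non-source marked vertex, so $|V(\CP_i')\setminus M(\CP_i')|=|V(\CP_i)|-|M(\CP_i)|+k_i$ with $0\le k_i\le|M(\CP_i)|$; hence each such term is $\le|V(\CP_i)|$ and the stated bound $|V(\CP_\ast)|\le|V(\CP_1)|+|V(\CP_2)|+|M(\CP_1)|\cdot|M(\CP_2)|$ follows.

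For the edge count I would split $E(\CP_\ast)$ into the three types of step B. Because marked vertices of $\CP_i'$ are sources, every edge leaving a non-marked vertex has a non-marked terminus, so the type-1 edges are precisely the out-edges of non-marked vertices; a short analysis of Algorithm \ref{al:marked_origins} shows these number at most $|E(\CP_1)|+|E(\CP_2)|$ and, more importantly, that the total number of edges leaving \emph{marked} vertices is unchanged by step A (each marked vertex simply carries its out-edges to its source copy). Thus the type-2 edges number at most $|M(\CP_1)|\,|V(\CP_1)|\cdot|M(\CP_2)|$ and symmetrically for type 3. Bounding $|M(\CP_i)|\le|V(\CP_i)|$ and $|E(\CP_i)|\le|V(\CP_i)|^2$, and using $|V(\CP_i)|^2\le|V(\CP_1)||V(\CP_2)|\,|V(\CP_i)|$, all four contributions collapse into $2(|V(\CP_1)|+|V(\CP_2)|)\,|V(\CP_1)|\,|V(\CP_2)|$. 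Finally, the running time is $O(|\CP_1|+|\CP_2|)$ for step A (Lemma \ref{le:source}) plus the cost of writing down the vertices and edges of $\CP$, which is linear in $|V(\CP_\ast)|+|E(\CP_\ast)|$ and hence dominated by the edge bound, giving the claimed cubic time.

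I expect the main obstacle to be the correctness step, specifically the verification that $(a,b)$ genuinely computes $t_a t_b$: this hinges entirely on the source property from step A guaranteeing that the children inherited in types 2 and 3 are exactly the (unchanged) children of $a$ and of $b$, so that their exponents add cleanly. The edge-count constant is the other delicate point, and obtaining the stated factor $2$ (rather than a looser bound) requires the observation that Algorithm \ref{al:marked_origins} preserves, rather than doubles, the number of out-edges of marked vertices.
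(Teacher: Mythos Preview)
Your proposal is correct and follows essentially the same approach as the paper: the correctness part expands the product of the two sums using $2^{p_a}2^{p_b}=2^{p_a+p_b}$ exactly as the paper does, and your size and time estimates go through the same intermediate bound $|E(\CP_1)|+|E(\CP_2)|+|M(\CP_1)|\cdot|M(\CP_2)|\cdot(|V(\CP_1)|+|V(\CP_2)|)$ before collapsing it to the stated cubic expression. If anything, you are more explicit than the paper about why the source property from step~A makes the children of $(a,b)$ precisely the (unchanged) children of $a$ and of $b$, and about why Algorithm~\ref{al:marked_origins} does not inflate the number of out-edges from marked vertices.
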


\begin{proof}
Since
    $$\CT(\CP_1) =  \sum_{v_i \in M_1} \nu(v_i) 2^{\left(\sum_{e \in Out_{v_i}} \mu(e) t_{\beta(e)}\right)}$$
and
    $$\CT(\CP_2) = \sum_{v_j \in M_2} \nu(v_j) 2^{\left(\sum_{e \in Out_{v_j}} \mu(e) t_{\beta(e)}\right)},$$
we get
$$\CT(\CP_1) \CT(\CP_2) =
 \sum_{v_i\in M_1,~v_j\in M_2} \nu(v_i)\nu(v_j) 2^{\left(\sum_{e
 \in Out_{v_i}} \mu(e) t_{\beta(e)} + \sum_{e \in
 Out_{v_j}} \mu(e) t_{\beta(e)}\right)}$$
$$= \sum_{(v_i,v_j) \in M = M_1 \times M_2} \nu((v_i,v_j)) 2^{\sum_{e \in
 Out_{(v_i,v_j)}} \mu(e) t_{\beta(e)}} = \CT(\CP_\ast).$$

To show that estimates for $V(\CP_\ast)$ and the time-complexity
hold we analyze Algorithm \ref{al:product_pp} step by step. By Lemma \ref{le:source}  Algorithm \ref{al:marked_origins}  is linear time and the following  estimates of the sizes hold:
$$|V(\CP_i')| \le |V(\CP_i)| + |M(\CP_i)| ~\mbox{ and }~ |M(\CP_i')| = |M(\CP_i)|$$
(where $i=1,2$). Hence, the time complexity of this step is at most
$O(|M(\CP_1)|+|V(\CP_1)| + |M(\CP_2)|+|V(\CP_2)|)$. On the next
two steps (B and C) we construct the graph $\CP$ in a very
straightforward way, so the complexity of these steps is the size
of $\CP$. By construction of $\CP$ we have
$$V(\CP_\ast) = (V(\CP_1') \smallsetminus M(\CP_1')) ~\cup~ (V(\CP_2') \smallsetminus
 M(\CP_2')) ~\cup~ M(\CP_1') \times M(\CP_2')$$
and the claimed estimate on $|V(\CP)|$ holds. Clearly, $|E(\CP)'|
\le |E(\CP_1)| + |E(\CP_2)| + |M(\CP_1)| \cdot |M(\CP_2)| \cdot
(|V(\CP_1)| + |V(\CP_2)|) \le 2 \cdot |V(\CP_1)| \cdot |V(\CP_2)|
\cdot (|V(\CP_1)| + |V(\CP_2)|)$. This gives the claimed
upper bound on the time complexity of Algorithm
\ref{al:product_pp}.

\end{proof}

Sometimes we denote the circuit $\CP_\ast$ constructed above by  $\CP_1 \ast \CP_2$.

\subsection{Multiplication and division by a power of two}
\label{se:multiplication_power}

Let $\CP_1$ and $\CP_2$ be power circuits. In this section we
present a procedure for constructing circuits $\CP_\bullet$ and
$\CP_\circ$ such that
    $$\CT(\CP_\bullet) =  \CT(\CP_1) \cdot 2^{\CT(\CP_2)} \mbox{ and } \CT(\CP_\circ) = \CT(\CP_1) \cdot2^{-\CT(\CP_2)}.$$
Observe that both $\CP_\bullet$ and $\CP_\circ$ can be constructed using operations above.
However, we present different  more efficient
procedures to build the required circuits.

\begin{algorithm} \label{al:multiplication_power2}{\em (Multiplication by a power of 2)}\\
{\sc Input.} Circuits $\CP_1$ and $\CP_2$.\\
{\sc Output.} A circuit $\CP_\bullet$ such that $\CT(\CP_\bullet) =  \CT(\CP_1) \cdot 2^{\CT(\CP_2)}$.\\
{\sc Computations.}
\begin{enumerate}
\item[A)] Construct the circuit  $\CP_1'$ which is equivalent to $\CP$ and where all marked vertices are sources.  Assume that $\CP_1' =
(\CP_1',M_1',\mu_1',\nu_1')$ and $\CP_2 =
(\CP_2,M_2,\mu_2,\nu_2)$.

\item[B)] Define $\CP_\bullet = (\CP_\bullet,M,\mu,\nu)$ as follows:

\begin{enumerate}
\item[1)] $\CP_\bullet$ is a disjoint union of $\CP_1'$ and
$\CP_2$.

\item[2)] For each $v_1 \in M_1$ and each $v_2 \in M_2$ add an
edge $v_1 \stackrel{\nu(v_2)}{\longrightarrow} v_2$ into
$\CP_\bullet$.

\item[3)] $M = M_1$ and $\nu = \nu_1$.

\end{enumerate}

\item[C)] Output $\CP_\bullet$.

\end{enumerate}
\end{algorithm}

Of course, the operation $x\cdot 2^{-y}$ can be expressed via subtraction and $x\cdot2^y$. However, we need a proper power circuit representation of an integer $x\cdot 2^{-y}$.

\begin{algorithm} \label{al:division_power2}{\em (Division by a power of 2)}\\
{\sc Input.} Constant power circuits $\CP_1$ and $\CP_2$.\\
{\sc Output.} A constant circuit $\CP_\circ$ such that $\CE(\CP_\circ) = \CE(\CP_1) 2^{-\CE(\CP_2)}$ and this is proper.\\
{\sc Computations.}
\begin{enumerate}
\item[A)] Let  $\CP_1'$ be a reduced constant power circuit equivalent to $\CP_1$ where all marked vertices are sources. Assume that
$\CP_1' = (\CP_1',M_1',\mu_1',\nu_1')$ and $\CP_2 =
(\CP_2,M_2,\mu_2,\nu_2)$.

\item[B)] Define $\CP_\circ$ to be $(\CP_\circ,M,\mu,\nu)$ where:

\begin{enumerate}
\item[1)] $\CP_\circ$ is a disjoint union of $\CP_1'$ and $\CP_2$.

\item[2)] For each $v_1 \in M_1$ and each $v_2 \in M_2$ add an
edge $v_1 \stackrel{-\nu(v_2)}{\longrightarrow} v_2$ into
$\CP_\circ$.

\item[3)] $M = M_1$ and $\nu = \nu_1$.

    \item[4)]
Collapse zero vertices in $\CP_\circ$ (there are at least $2$ of them,
one coming from $\CP_1$ and the other from $\CP_2$).
\end{enumerate}

\item[C)] Output $\CP_\circ$.

\end{enumerate}
\end{algorithm}

\begin{proposition}\label{pr:complexity_divpower2}
Let $\CP_1= (\CP_1,M_1,\mu_1,\nu_1)$ and $\CP_2 =
(\CP_2,M_2,\mu_2,\nu_2)$ be circuits, $\CP_\bullet = \CP_1 \bullet
\CP_2$, and $\CP_\circ = \CP_1 \circ \CP_2$. Then
\begin{itemize}
    \item[1)]
$\CE(\CP_\bullet) = \CE(\CP_1) 2^{\CE(\CP_2)}$ and $\CE(\CP_\circ)
= \frac{\CE(\CP_1)} {2^{\CE(\CP_2)}}$;
    \item[2)]
$|V(\CP_\bullet)|,|V(\CP_\circ)| \le |V(\CP_1)|+|V(\CP_2)|+|M_1|$.
    \item[3)]
The time complexity of Algorithm \ref{al:multiplication_power2} is
bounded from above by $O(|\CP_1|+|\CP_2|+|M_1|\cdot|M_2|)$.
    \item[4)]
The time complexity of Algorithm \ref{al:division_power2} is
bounded from above by $O(|V(\CP_1)|^3+|\CP_2|+|M_1|\cdot|M_2|)$.
\end{itemize}
\end{proposition}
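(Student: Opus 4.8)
The plan is to verify all four claims by tracking how the freshly added edges shift the exponents of the marked vertices, handling $\CP_\bullet$ and $\CP_\circ$ in parallel since the two algorithms differ only in an edge label and in the reduction step. The decisive structural fact, guaranteed by Step A of both algorithms, is that every marked vertex $v_1\in M_1'$ is a \emph{source}: it has no incoming edges, so appending \emph{outgoing} edges to it changes only its own value $\CE(v_1)$ and leaves the $\CE$-values of all other vertices of $\CP_1'$ and of $\CP_2$ untouched. This is exactly why Algorithm \ref{al:marked_origins} is invoked first.

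First I would establish claim 1). Write $\CE(v_1)=2^{p_{v_1}}$ with $p_{v_1}=\sum_{e\in Out_{v_1}}\mu(e)\CE(\beta(e))$. In Step B.2 of Algorithm \ref{al:multiplication_power2} we attach to each such $v_1$ the edges $v_1\stackrel{\nu(v_2)}{\rightarrow}v_2$ ranging over $v_2\in M_2$, which augments the exponent by $\sum_{v_2\in M_2}\nu(v_2)\CE(v_2)=\CE(\CP_2)$. Hence the new value is $2^{p_{v_1}+\CE(\CP_2)}=\CE(v_1)\cdot 2^{\CE(\CP_2)}$, and since $M=M_1$, $\nu=\nu_1$ are unchanged, $\CE(\CP_\bullet)=\sum_{v_1\in M_1}\nu(v_1)\CE(v_1)2^{\CE(\CP_2)}=\CE(\CP_1)2^{\CE(\CP_2)}$. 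For division the computation is identical with label $-\nu(v_2)$, producing an exponent shift of $-\CE(\CP_2)$ and thus $\CE(\CP_\circ)=\CE(\CP_1)2^{-\CE(\CP_2)}$.

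The main obstacle is the \emph{properness} of $\CP_\circ$, and this is precisely where Step A's demand for a \emph{reduced} $\CP_1'$ is needed. Since $\CP_1'$ is reduced, distinct marked vertices have distinct values, so $\CE(\CP_1)=\sum_{v_1\in M_1'}\nu(v_1)2^{p_{v_1}}$ is a reduced binary sum in the sense of Section \ref{se:binary_sums}. The output is required to represent an integer, i.e. $\CE(\CP_1)$ is divisible by $2^{\CE(\CP_2)}$; by Lemma \ref{le:binary_divisibility}, part 4), this forces the smallest exponent among the $p_{v_1}$ to be at least $\CE(\CP_2)$, whence every shifted exponent $p_{v_1}-\CE(\CP_2)$ is nonnegative and every marked vertex acquires a value in $\MN$. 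Because the source property leaves all remaining vertices (which lie in $\MN$ as $\CP_1'$ and $\CP_2$ are proper) unchanged, the entire circuit $\CP_\circ$ is proper; the collapse of the (at least two) zero vertices in Step B.4 merely restores the standard-form convention and does not alter any value.

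Finally I would carry out the bookkeeping for claims 2)--4). For vertices, $V(\CP_\bullet)=V(\CP_1')\sqcup V(\CP_2)$, and the construction in Algorithm \ref{al:marked_origins} duplicates at most one vertex per marked vertex, so $|V(\CP_1')|\le|V(\CP_1)|+|M_1|$ and $|V(\CP_\bullet)|\le|V(\CP_1)|+|V(\CP_2)|+|M_1|$. For $\CP_\circ$ the reduction of Theorem \ref{th:reduction} contributes at most one extra vertex and does not increase the number of marked vertices, while collapsing the two zeros removes at least one vertex, so the ``$+1$'' and the ``$-1$'' cancel and the same bound holds. For time, Algorithm \ref{al:multiplication_power2} performs a linear source conversion (Lemma \ref{le:source}), a disjoint union in time $O(|\CP_1|+|\CP_2|)$, and the insertion of $|M_1|\cdot|M_2|$ edges, giving $O(|\CP_1|+|\CP_2|+|M_1|\cdot|M_2|)$. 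Algorithm \ref{al:division_power2} differs only in replacing the source conversion by a full reduction whose cubic cost $O(|V(\CP_1)|^3)$ (Proposition \ref{pr:reduction_complexity}) dominates, yielding $O(|V(\CP_1)|^3+|\CP_2|+|M_1|\cdot|M_2|)$, as claimed.
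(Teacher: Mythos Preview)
Your proof is correct and is precisely the direct verification that the paper leaves to the reader (its entire proof is ``Straightforward to check''). Your treatment is in fact more careful than the paper's, notably in spelling out why $\CP_\circ$ is \emph{proper}: the reducedness of $\CP_1'$ makes $\CE(\CP_1)=\sum_{v_1\in M_1'}\nu(v_1)2^{p_{v_1}}$ a reduced binary sum, so Lemma~\ref{le:binary_divisibility}(4) forces every $p_{v_1}\ge \CE(\CP_2)$ and the shifted exponents stay nonnegative. Your bookkeeping for the vertex bound on $\CP_\circ$---the ``$+1$'' from Theorem~\ref{th:reduction} cancelled by the ``$-1$'' from collapsing the two zeros, together with the observation that reduction never increases $|M|$ so the source-conversion still contributes at most $|M_1|$ vertices---is exactly the accounting needed and matches the intended argument.
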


\begin{proof}
Straightforward to check.
\end{proof}

We already pointed out that the operation $\CP_1 \circ \CP_2$
is not defined for all pairs of power circuits
$\CP_1 = (\CP_1, \mu_1, M_1,\nu_1)$, $\CP_2 = (\CP_2, \mu_2, M_2,\nu_2)$
because the value $\CE(\CP_1) \cdot 2^{-\CE(\CP_2)}$ is not always
an integer.
We can naturally extend the domain of definition of $\circ$
to the set of all pairs $\CP_1,\CP_2$ by rounding
the value of $\CE(\CP_1) \cdot 2^{-\CE(\CP_2)}$.

Our algorithms do not become less efficient
if we use $\circ$ with rounding. Indeed, if
    $$\CE(\CP_1) = \sum_{v\in M_1} \nu_1(v) \CE(v)~ \mbox{ where }~ \CE(v) = 2^{\left(\sum_{e \in Out_{v}} \mu_1(e) \CE(\beta(e))\right)}$$
then
    $$\CE(\CP_1) \cdot 2^{-\CE(\CP_2)} = \sum_{v\in M_1} \nu_1(v) 2^{\left(\sum_{e \in Out_{v}} \mu_1(e) \CE(\beta(e))\right)-\CE(\CP_2)}.$$
To round up the value of $\CE(\CP_1) \cdot 2^{-\CE(\CP_2)}$ it is sufficient to remove
all vertices $v$ from $M_1$ such that $\sum_{e \in Out_{v}} \mu_1(e) \CE(\beta(e)) < \CE(\CP_2)$.
That can be done in polynomial time by Proposition \ref{pr:reduction_complexity}.

\subsection{Ordering}

Clearly, $\CE(\CP_1)<\CE(\CP_2)$ if and only if $\CE(\CP_1-\CP_2)<0$. Therefore,
to compare values of constant power circuits $\CP_1$ and $\CP_2$
it is sufficient to  compare a value of a circuit $\CP_1-\CP_2$ with $0$.
For a constant power circuit $\CP$ define
$$Sign(\CP) =
 \left\{
 \begin{array}{ll}
 -1, & \mbox{if } \CE(\CP)<0;\\
 0, & \mbox{if } \CE(\CP)=0;\\
 1, & \mbox{if } \CE(\CP)>0.\\
 \end{array}
 \right.
$$\\

\begin{algorithm} \label{al:compare}({\em Sign of $\CE(\CP)$})\\
{\sc Input.} A circuit $\CP$.\\
{\sc Output.} $Sign(\CP)$.
\\
{\sc Computations.}
\begin{enumerate}
    \item[A)]
Let $\CP' = Reduce(\CP)$ and $C = \{v_1,\ldots,v_k\}$ be a
sequence of vertices produced by Algorithm \ref{al:reduce} such
that $\CE(v_i) < \CE(v_j)$ whenever $1\le i<j\le k$.
    \item[B)]
If $\CP'$ is trivial then output $0$.
    \item[C)]
Find the marked vertex $v_i$ in $\CP'$ with the greatest index
$i$.
    \item[D)]
Output $\nu(v_i)$.
\end{enumerate}
\end{algorithm}

\begin{proposition} \label{pr:complexity_compare_circ}
Let $\CP$ be a constant power circuit. Then  Algorithm
\ref{al:compare} computes $Sign(\CP)$ in time bounded from above by $O(|V(\CP)|^3)$.
\end{proposition}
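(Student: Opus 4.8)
The plan is to split the argument into correctness and complexity, since essentially all of the real work has already been carried out by the reduction machinery and by Proposition~\ref{pr:circuit_compare}. For correctness, I would first invoke Proposition~\ref{pr:reduction_result} to conclude that the reduced circuit $\CP' = Reduce(\CP)$ produced in step~A satisfies $\CE(\CP') = \CE(\CP)$, so that $Sign(\CP') = Sign(\CP)$ and it suffices to read the sign off $\CP'$.

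Next I would apply Proposition~\ref{pr:circuit_compare} to the reduced circuit $\CP'$. Part~1) of that proposition gives that $\CE(\CP') = 0$ precisely when $\CP'$ is trivial, which justifies returning $0$ in step~B. Part~2) states that if $v$ is a marked vertex of maximal $\CE$-value, then $\nu(v)$ equals the sign of $\CE(\CP')$. The one point requiring care is to observe that the ordering $\CE(v_1) < \dots < \CE(v_k)$ supplied by Algorithm~\ref{al:reduce} (available because a reduced circuit has pairwise distinct $\CE$-values by property~(R2)) means that the marked vertex of greatest index is exactly the marked vertex of maximal value. Hence the output $\nu(v_i)$ in step~D equals $Sign(\CP)$, as required.

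For the complexity bound I would simply track the cost of each step. Step~A is an application of Algorithm~\ref{al:reduce}, which costs $O(|V(\CP)|^3)$ by Proposition~\ref{pr:reduction_complexity}; this term dominates. Detecting triviality in step~B, scanning the already-ordered marked vertices for the one of greatest index in step~C, and reading off $\nu(v_i)$ in step~D are each at most linear in $|V(\CP')|$, and the reduction guarantees $|V(\CP')| \le |V(\CP)| + 1$. Summing the contributions yields the claimed $O(|V(\CP)|^3)$.

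I do not expect a genuine obstacle: the statement is in effect a corollary of the reduction theorem (Theorem~\ref{th:reduction}) combined with Proposition~\ref{pr:circuit_compare}. The only step needing an explicit remark is the identification of the index order returned by the reduction with the order of $\CE$-values, so that \emph{greatest index} correctly picks out the maximal marked vertex; the remainder is routine bookkeeping of costs.
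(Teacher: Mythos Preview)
Your proposal is correct and follows essentially the same route as the paper: reduce $\CP$ via Algorithm~\ref{al:reduce}, invoke Proposition~\ref{pr:circuit_compare} on the reduced circuit to identify the sign with $\nu$ of the marked vertex of maximal $\CE$-value, use the ordering returned by the reduction to pick that vertex out by greatest index, and appeal to Proposition~\ref{pr:reduction_complexity} for the cubic bound. The paper's proof additionally spells out that $\CE(\CP')$ is a reduced binary sum, but this is already implicit in your appeal to Proposition~\ref{pr:circuit_compare}.
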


\begin{proof}
Let $\CP'$ be the reduced power circuit equivalent to $\CP$  produced by Algorithm \ref{al:reduce},  and $C = \{v_1,\ldots,v_k\}$ be a
sequence of vertices produced by Algorithm \ref{al:reduce} such
that $\CE(v_i) < \CE(v_j)$ whenever $1\le i<j\le k$. Then
$$\CE(\CP) = \CE(\CP') = \sum_{v_j\in M} \nu(v_j) \CE(v_j) = \sum_{v_j\in M} \nu(v_j) 2^{\left(\sum_{e \in Out_{v_j}} \mu(e) \CE(\beta(e))\right)}$$
which is a reduced binary sum (see Section
\ref{se:elementary_properties}). By Proposition \ref{pr:circuit_compare}
$Sign(\CE(\CP)')$ is the coefficient of the greatest power of
$2$,  which is $\nu(v_i)$, where $i$ is the greatest index such that
$v_i\in M$. Hence  $Sign(\CE(\CP)) = \nu(v_i)$ as claimed.

By Proposition \ref{pr:reduction_complexity}, the reduction
process performed by algorithm \ref{al:reduce} has time-complexity
$O(|V(\CP)|^3)$. Once $\CP$ is reduced, it is immediate to find
the value of $\nu(v_i)$. Thus, $O(|V(\CP)|^3)$ is an upper bound
for time-complexity of Algorithm \ref{al:compare}.
\end{proof}

\section{Exponential algebra on power circuits}
\label{se:exponential_algebra_circuits}

Fix a language
    $$\CL = \{+,-,\ast, x\cdot 2^y, x\cdot2^{-y},\le, 0, 1\},$$
its sublanguage $\CL_0$, which is obtained from $\CL$ by removing the multiplication $\ast$; and structures
    $$\MZ_\CL = \gp{\MZ;+,-,\ast, x\cdot 2^y, x\cdot2^{-y}, \le,1}$$
and
    $$\tilde Z = \MZ_{\CL_0} = \gp{\MZ;+,-, x\cdot 2^y, x\cdot2^{-y}, \le,1}.$$
In this section we show that there exists an algorithm
that for every algebraic $L$-circuit $C$ finds an
equivalent standard power circuit $\CP$, or equivalently,
there exists an algorithm which for every term $t$ in
the language $\CL$ finds a power circuit $C_t$ which
represents a term equivalent to the term $t$ in $\MZ_\CL$.
Moreover, if the term $t$ is in the language $\CL_0$ then
the algorithm computes the circuit $C_t$ in linear time in
the size of $t$.  For integers and closed terms in $\CL_0$
one can get much stronger results. Let $\CC_{norm}$
be the set of all constant normal power circuits
(up to isomorphism). We show that if  $t(X)$ is a
term in $\CL_0$ and $\eta:X \to \MZ$ an assignment of
variables, then there exists an algorithm which determines
if $t(\eta(X))$ is defined in $\MZ_\CL$ (or $\tilde Z$)  or not;
and if defined it then produces the normal
circuit $\CP_t $ that presents the number $t(\eta(X))$ in
polynomial time. At the end of the section we prove that
the  quantifier-free theory of the structure  $\tilde{Z}$
with all the constants from $\MZ$ in the language is decidable in polynomial time.

\subsection{Algebra of power circuits}

We have mentioned in Introduction that every term $t$ in the language $\CL$
can be realized in $\tilde Z$ by an algebraic $\CL$-circuit. In this section we show that every such term $t$ also
can be realized in $\tilde Z$ by a power circuit $\CP_t$. Furthermore, we show that
if $t$ does not involve multiplication, then the circuit $\CP_t$ can be computed in polynomial time
in the size of $t$ (which may not be true if $t$ involves multiplications).

Let $\CC$ be the set of all power circuits in variables from a set $X = \{x_1, x_2, \ldots, \}$.
Recall, that two circuits $\CP_1, \CP_2 \in \CC$ are equivalent ($\CP_1 \sim \CP_2$)
if the terms $\CT_{\CP_1}$ and $\CT_{\CP_2}$ define the same function in  $\tilde Z$.
In Section \ref{se:operations} we defined  operations $+,-,\ast, x\cdot 2^y, x\cdot2^{-y}$ on
power circuits. It is easy to see
from the construction that these operations are compatible with the equivalence
relation $\sim$, so they induce the corresponding operations on the quotient set $\CC/\sim$,
forming an algebraic $\CL$-structure
$$\CC = \langle \CC/\sim; +,-,\ast, x\cdot 2^y, x\cdot2^{-y},0,1\rangle$$
where we interpret the constants $0, 1$ by the equivalent classes of the  normal
power circuits with the values $0$ and $1$.

To clarify the algebraic structure of $\CC$ we need the following.
Let $\CT_\CL$ be the set of all terms in the language $\CL$.
Two terms  $t_1$ and $t_2$ are termed {\em equivalent} ($t_1 \sim t_2$) if they define the same
functions on $\tilde Z$. The quotient set $\CT_\CL/\sim$ can be naturally identified
with the set $\CF_\CL$ of all {\em term functions} induced  by terms from $\CT_\CL$ in $\tilde Z$. Obviously, the operations in $\CC$ are precisely the same as the corresponding  operations over the term functions in $\CF_\CL$.

Denote by $C_0, C_1$ and $C_x$ some standard  circuits that realize the terms $0,1$ and a variable $x \in X$.
Define a map
$$\tau:\CT_\CL \to \CC$$
 by induction on complexity of the terms:
\begin{itemize}
    \item
if $t \in \{0,1\}\cup X$ then $\tau(t) = C_t$;
    \item
if $t = f(t_1,t_2)$ where $t_1,t_2$ are terms and $f$ is an operation from $\CL$ then
the circuit $\tau(t) = f(\tau(t_1),\tau(t_2))$ is obtained from $\tau(t_1)$ and $\tau(t_2)$
as  described in Section \ref{se:operations}.
\end{itemize}
The next proposition immediately follows from the construction.

\begin{proposition}
The following hold:
\begin{itemize}
    \item[(1)]
$\tau$ induces an isomorphism of the algebraic structures
$$\tau: \langle \CT_\CL/\sim ;+,-,\ast, x\cdot 2^y, x\cdot2^{-y},0,1\rangle \to \langle \CC/\sim ;+,-,\ast, x\cdot 2^y, x\cdot2^{-y},0,1 \rangle.$$
    \item[(2)]
Let $t \in \CT_\CL$ and $\CP = \tau(t)$. Then the terms $t$ and $\CT_P$ are equivalent in $\MZ_\CL$.
\end{itemize}
\end{proposition}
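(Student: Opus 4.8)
The plan is to establish part (2) first, by structural induction on the term $t$, and then to deduce part (1) as a formal consequence. For the base case of (2), if $t \in \{0,1\}\cup X$ then by definition $\tau(t) = C_t$ is a standard circuit chosen so that $\CT_{C_t}$ is literally the term $t$, so there is nothing to check. For the inductive step $t = f(t_1,t_2)$, I would assume that $\CT_{\tau(t_i)}$ and $t_i$ induce the same (possibly partial) function on $\tilde Z$ for $i=1,2$. Since $\tau(t) = f(\tau(t_1),\tau(t_2))$, where $f$ is computed by the corresponding algorithm of Section~\ref{se:operations}, I would invoke the per-operation correctness statements proved there: Proposition~\ref{pr:complexity_add} for $+$ and $-$, the correctness part of the proposition following Algorithm~\ref{al:product_pp} for $\ast$, and Proposition~\ref{pr:complexity_divpower2} for $x\cdot 2^{y}$ and $x\cdot 2^{-y}$. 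Each of these asserts exactly that $\CT_{f(\CP_1,\CP_2)} = f(\CT_{\CP_1},\CT_{\CP_2})$. Composing with the induction hypothesis yields $\CT_{\tau(t)} = f(\CT_{\tau(t_1)},\CT_{\tau(t_2)}) \sim f(t_1,t_2) = t$, which is the claim.

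For part (1) I would argue purely formally from (2). First, $\tau$ descends to the quotient: if $t_1\sim t_2$ then by (2) we get $\CT_{\tau(t_1)}\sim t_1 \sim t_2 \sim \CT_{\tau(t_2)}$, hence $\tau(t_1)\sim\tau(t_2)$, so the induced map $\bar\tau:\CT_\CL/\!\sim\ \to\ \CC/\!\sim$ is well defined. It is a homomorphism because the equality $\tau(f(t_1,t_2)) = f(\tau(t_1),\tau(t_2))$ holds on the nose and, as noted just before the definition of $\tau$, the circuit operations are compatible with $\sim$; the constants $0,1$ are sent to the classes declared as their interpretations in $\CC$. Injectivity is immediate from (2): if $\tau(t_1)\sim\tau(t_2)$ then $t_1\sim\CT_{\tau(t_1)}\sim\CT_{\tau(t_2)}\sim t_2$, so $t_1$ and $t_2$ coincide in $\CT_\CL/\!\sim$.

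For surjectivity I would use that every circuit $\CP\in\CC$ carries an associated term $\CT_\CP$ which, by its inductive definition, is assembled only from $+$, $-$, the constants $0,1$, variables, and exponentials $2^{(\cdot)} = (x\cdot 2^{y})(1,\,\cdot\,)$, and is therefore a genuine $\CL$-term. Applying (2) to $t=\CT_\CP$ gives $\CT_{\tau(\CT_\CP)}\sim\CT_\CP$, i.e.\ $\tau(\CT_\CP)\sim\CP$, so $\CP$ lies in the image of $\bar\tau$. The only genuine subtlety in the whole argument is the bookkeeping for the partial operation $x\cdot 2^{-y}$ in the inductive step of (2): one must track domains of definition and check that the circuit operation reproduces not merely the value but also the domain of $f(t_1,t_2)$. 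This, however, is precisely what the cited propositions guarantee, so I expect no real obstacle beyond this routine verification; the remainder is a direct transcription of the operation-by-operation correctness lemmas together with the inductive definition of $\tau$.
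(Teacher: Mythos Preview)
Your proposal is correct and is precisely the argument the paper has in mind: the paper states only that the proposition ``immediately follows from the construction,'' and your structural induction on $t$ using the per-operation correctness results of Section~\ref{se:operations}, followed by the formal deduction of (1) from (2), is exactly how one unpacks that remark. One small quibble: in the base case you say $\CT_{C_t}$ is \emph{literally} the term $t$, whereas the paper only asserts that $C_0,C_1,C_x$ \emph{realize} the terms $0,1,x$, i.e.\ $\CT_{C_t}\sim t$; this does not affect the argument since only equivalence is needed.
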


\begin{corollary}
There is an algorithm that for every algebraic $L$-circuit $C$ finds an equivalent standard power circuit $\CP$.
\end{corollary}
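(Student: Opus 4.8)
The plan is to realize the output term of $C$ by a power circuit through the map $\tau$ of the preceding proposition, processing the gates of $C$ in topological order, and then to standardize the resulting circuit. First I would fix a distinguished output vertex of $C$; by the definition of an algebraic circuit this vertex carries a term $t_C \in \CT_\CL$ whose induced term function is exactly the function computed by $C$ in $\tilde Z$. Since $C$ is a finite directed acyclic graph, its vertices can be enumerated in a topological order in which every gate appears after all of its inputs. To each source of $C$ labelled by a constant $0$ or $1$, or by a variable $x$, I assign the standard circuit $C_0$, $C_1$, or $C_x$, respectively; this is the base of the induction.

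Next I would assign a power circuit $\CP_v$ to each remaining vertex $v$ by induction along this order. For a gate $v = f(u_1,u_2)$ with $f \in \{+,-,\ast,\, x\cdot 2^y,\, x\cdot 2^{-y}\}$, and with $\CP_{u_1},\CP_{u_2}$ already built, I set $\CP_v = f(\CP_{u_1},\CP_{u_2})$ using the corresponding construction of Section \ref{se:operations} (Algorithm \ref{al:sum_pp} for $+$ and its subtraction variant, Algorithm \ref{al:product_pp} for $\ast$, Algorithm \ref{al:multiplication_power2} for $x\cdot 2^y$, and Algorithm \ref{al:division_power2} for $x\cdot 2^{-y}$). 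Each of these algorithms is effective, and the propositions of that section guarantee that $\CT_{\CP_v}$ is equivalent in $\MZ_\CL$ to $f$ applied to $\CT_{\CP_{u_1}}$ and $\CT_{\CP_{u_2}}$. A straightforward induction on the topological order, chaining these local equivalences, then shows that the circuit $\CP$ assigned to the output vertex satisfies $\CT_\CP \sim t_C$, i.e. $\CP \sim C$. To finish, I feed $\CP$ into Algorithm \ref{al:remove_redundancies}; by Proposition \ref{pr:trimming} its output $\CP'$ is standard and strongly equivalent, hence equivalent, to $\CP$, so $\CP' \sim C$, as required.

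The main point to handle carefully is the interplay between the tree-recursion defining $\tau$ and the shared (DAG) structure of $C$: a single vertex of $C$ may feed several gates, so one should apply the operations of Section \ref{se:operations} to the already-constructed power circuits of the inputs rather than to freshly expanded subterms. Since each of those operations is built from disjoint unions and local edge insertions, reusing $\CP_u$ at two parent gates simply duplicates it where needed, and the result agrees with $\tau(t_C)$ up to equivalence. This costs nothing here, because the corollary asserts only the existence of an algorithm and not a complexity bound; in particular there is no need to write out the term $t_C$ explicitly, even though it may be exponentially larger than $C$.
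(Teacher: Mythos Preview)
Your argument is correct and follows the paper's (implicit) route: the corollary is an immediate consequence of the preceding proposition on $\tau$, since an algebraic $\CL$-circuit $C$ carries a term $t_C$, and $\tau(t_C)$ is then a power circuit with $\CT_{\tau(t_C)} \sim t_C$; you simply spell out the topological-order recursion that realizes $\tau$ algorithmically and add the final standardization via Algorithm~\ref{al:remove_redundancies}. One small citation wrinkle: Algorithm~\ref{al:division_power2} as stated takes \emph{constant} power circuits (its step~A reduces $\CP_1$), so for a non-constant gate $x\cdot 2^{-y}$ you should either invoke the obvious variant that skips the reduction (step~B alone already yields the right term), or appeal to the remark preceding that algorithm that $\CP_\circ$ can be built from the other operations; either fix is immediate and does not affect your argument.
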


Let $\CT_{\CL_0}$ be a subset of $\CT_\CL$ consisting of terms in the language $\CL_0$.
We prove now that the restriction  of $\tau$ on $\CT_{L_0}$ is linear
time computable in the size of an input term $t$ (the number  $|t|$ of operations that occur in $t$).

\begin{theorem} \label{th:theoremB}
Given $t \in \CT_{L_0}$ it requires  at most $O\rb{|t|}$
steps to compute $\CP = \tau(t)$.
Furthermore, $|M(\CP)| \le |t|+1$, $|V(\CP)| \le 2|t|+2$, and every marked vertex in $\CP$
is a source.
\end{theorem}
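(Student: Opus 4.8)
The plan is to induct on the structure of the term $t$, proving all three bounds simultaneously together with the auxiliary invariant that every marked vertex of $\CP = \tau(t)$ is a source. Write $|t|$ for the number of operation symbols in $t$, so that $|t| = |t_1| + |t_2| + 1$ whenever $t = f(t_1,t_2)$. In the base case $|t|=0$ the term is $0$, $1$, or a variable $x$, and the fixed standard circuits $C_0, C_1, C_x$ may be chosen with a single marked vertex that is a source and with at most two vertices; hence $|M|\le 1 = |t|+1$, $|V|\le 2 = 2|t|+2$, and each is built in constant time.

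For the inductive step $t = f(t_1,t_2)$ with $f\in\{+,-,\bullet,\circ\}$ (these are the only term-forming operations of $\CL_0$, since $\le$ is a predicate and $2^y$ is obtained as $1\cdot 2^y$), the circuits $\CP_i = \tau(t_i)$ satisfy the bounds and have all marked vertices as sources by hypothesis. The crucial observation is that, because the marked vertices of each $\CP_i$ are \emph{already} sources, the preprocessing Algorithm \ref{al:marked_origins} invoked inside Algorithms \ref{al:sum_pp}, \ref{al:multiplication_power2} and \ref{al:division_power2} acts trivially and creates no new vertices; thus $\CP_i' = \CP_i$ and the extra $+|M_1|$ appearing in Proposition \ref{pr:complexity_divpower2} vanishes. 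Consequently in all four cases $|V(\CP)| = |V(\CP_1)| + |V(\CP_2)| \le (2|t_1|+2)+(2|t_2|+2) = 2|t|+2$. For $f\in\{+,-\}$ the marks merge, giving $|M(\CP)| = |M_1|+|M_2| \le (|t_1|+1)+(|t_2|+1) = |t|+1$, and since the disjoint union adds no edge incident to a marked vertex, the source invariant persists. For $f\in\{\bullet,\circ\}$ we set $M = M_1$, so $|M(\CP)| = |M_1| \le |t_1|+1 \le |t|+1$; the new edges run from vertices of $M_1$ to vertices of $M_2$, adding only \emph{outgoing} edges at the marked vertices, so again every marked vertex stays a source. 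This closes the induction for the three structural claims.

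The remaining step, bounding the running time, is where I expect the real difficulty. The recursion performs exactly $|t|$ circuit operations, so it suffices to control the total work. The $+$ and $-$ steps only assemble a disjoint union and merge mark sets (Proposition \ref{pr:complexity_add}), and since every mark ever produced originates from a single base circuit, these costs aggregate to $O(|t|)$. The genuine obstacle is $\bullet$ and $\circ$: by Proposition \ref{pr:complexity_divpower2} one such step inserts $|M_1|\cdot|M_2|$ edges from $M_1$ to $M_2$, and a naive accounting (e.g.\ a term $(x_1+\dots+x_k)\cdot 2^{(y_1+\dots+y_k)}$, for which the construction materializes $\sim k^2$ edges) only yields $O(|t|^2)$. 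Reaching the stated $O(|t|)$ therefore hinges on the cost model: I would argue that each operation adds $O(1)$ new \emph{vertices} and that the source invariant prevents any vertex duplication, so the number of high-level construction steps — node creations, mark updates, and disjoint unions realized by pointer linking — is linear, with the $|M_1|\cdot|M_2|$ edges treated as a derived quantity not separately charged (consistent with the theorem's deliberate omission of an $|E(\CP)|$ bound). Making this precise, or else replacing $\bullet,\circ$ by a construction that shares the common exponent offset, is the delicate point on which the linear-time claim rests.
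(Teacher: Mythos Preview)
Your induction for the structural bounds ($|M(\CP)|\le|t|+1$, $|V(\CP)|\le 2|t|+2$, and the source invariant) is essentially identical to the paper's proof and is correct; the key observation---that the source invariant makes Algorithm~\ref{al:marked_origins} act trivially, so no vertex duplication occurs in the $\bullet,\circ$ steps---is exactly what the paper uses (though the paper phrases it more tersely and only cites Algorithms~\ref{al:sum_pp} and~\ref{al:multiplication_power2}).

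On the running time you have in fact been more scrupulous than the paper. The paper's entire justification for $O(|t|)$ is the single sentence ``Moreover, the circuit $P$ in both cases is computed in linear time in $|t|$,'' with no mention of the $|M_1|\cdot|M_2|$ edge insertions that Proposition~\ref{pr:complexity_divpower2} records for $\bullet$ and $\circ$. Your counterexample $(x_1+\dots+x_k)\cdot 2^{(y_1+\dots+y_k)}$ is correct: with $|t|=2k-1$, the final $\bullet$ step alone materializes $k^2=\Theta(|t|^2)$ edges, so under the natural cost model (each edge creation is a step) the bound is $\Theta(|t|^2)$, not $O(|t|)$. This gap is genuinely present in the paper's proof as well; the linear claim only survives under a cost model that counts vertex/structural operations and treats the edge set implicitly, precisely the reading you propose in your final paragraph. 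So you have not missed an idea from the paper---you have identified an imprecision that the paper itself does not resolve.
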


\begin{proof}
Induction on complexity of the term $t$.
The terms $0$, $1$, and $x \in X$ do not involve any operations, the corresponding circuits $C_0, C_1, C_x$ satisfy the conditions
$|M(\CP)| \le 1$, $|V(\CP)| \le 2$, and have the property that every marked vertex is a source.
Now, assume that the statement holds for terms $t_1$ and $t_2$.
Let $t = f(\tau_1,\tau_2)$, where $f$ is an operation from $\CL_0$ and $\CP_1 = \tau(t_1)$, $\CP_2 = \tau(t_2)$.
Let $\CP = f(\CP_1,\CP_2)$ constructed by the appropriate algorithm from Section \ref{se:operations}.
Since every vertex in $\CP_2$ is a source it immediately follows from
Algorithms \ref{al:sum_pp} and \ref{al:multiplication_power2} that
    $$|M(\CP)| \le |M(\CP_1)| + |M(\CP_2)| \mbox{ and } |V(\CP)| \le |V(\CP_1)| + |V(\CP_2)|$$
and every vertex in $\CP$ is a source. Therefore, $|M(\CP)| \le |t_1|+1+|t_2|+1 = |t|+1$
and $|V(\CP)| \le 2|t_1|+2+2|t_2|+2 = |t|+2$. Moreover, the circuit $P$ in both cases is computed in linear time in $|t|$.
\end{proof}

In contrast to Theorem \ref{th:theoremB} we construct in Section \ref{se:with-multiplication} a sequence of terms $\{t_i\}$
with multiplication in the language such that the size of $\tau(t_i)$ grows exponentially.

\subsection{Power representation of integers}

Let $\CC_{norm}$ be the set of all constant normal power circuits
up to isomorphism (so $\CC_{norm}$ consists of the equivalence classes of isomorphic normal power circuits).
Every operation $f \in \CL$ induces a similar operation $f$ on $\CC_{norm}$ defined  for
$\CP_1, \CP_2 \in  \CC_{norm}$ as  $(\CP_1,\CP_2) \to Normal(f(\CP_1,\CP_2))$.
Define a map $\lambda: \mathbb{Z} \to \CC_{norm}$ such that $\lambda(n)$ is
the the unique (up to isomorphism) normal power circuit representing $n \in \mathbb{Z}$.
The next proposition follows directly from the definition of $\lambda$ and the results on normal power circuits.

\begin{proposition}
The following hold:
\begin{itemize}
    \item
the map $\lambda$ defines an isomorphism of $\CL$-structures
    $$\lambda: \langle \mathbb{Z}; +,-,\ast, x\cdot 2^y, x\cdot2^{-y},0,1\rangle \to
\langle \CC_{norm}; +,-,\ast, x\cdot 2^y, x\cdot2^{-y},0,1\rangle.$$
    \item
If $t$ is a closed term in $\CL$ which gives a number $n \in \mathbb{Z}$
then $\lambda(n) = Norm(\tau(t))$.
\end{itemize}
\end{proposition}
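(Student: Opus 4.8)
The plan is to derive both assertions by combining the value-correctness of the circuit operations from Section~\ref{se:operations} with the uniqueness of normal forms; no substantially new argument is required. First I would confirm that $\lambda$ is a well-defined bijection between $\MZ$ and $\CC_{norm}$. Surjectivity and well-definedness come from Theorem~\ref{th:normal_cp_number}: for every $n \in \MN$ it yields a constant normal power circuit of value $n$, and using a sign this extends to all $n \in \MZ$; conversely each member of $\CC_{norm}$ is proper, hence represents an integer $\CE(\CP)$. Injectivity is exactly the theorem on uniqueness of normal forms, which says that two normal circuits represent the same integer if and only if they are isomorphic, so the induced map on isomorphism classes is one-to-one.

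Next I would show that $\lambda$ respects each operation, i.e. that for every binary $f \in \{+,-,\ast, x\cdot2^y, x\cdot2^{-y}\}$ one has $\lambda(f(m,n)) = Normal(f(\lambda(m),\lambda(n)))$, which is precisely the operation induced on $\CC_{norm}$. The crucial input is that the circuit constructions of Section~\ref{se:operations} are value-correct: Proposition~\ref{pr:complexity_add} gives $\CT_{\CP_+}=\CT_{\CP_1}+\CT_{\CP_2}$ (and its variant handles $-$), the proposition on circuit multiplication gives the analogous identity for $\ast$, and Proposition~\ref{pr:complexity_divpower2} does the same for $\bullet$ and $\circ$, i.e. for $x\cdot2^y$ and $x\cdot2^{-y}$. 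Hence $f(\lambda(m),\lambda(n))$ is a (generally non-normal) constant circuit of value $f(m,n)$; applying $Normal$ from Theorem~\ref{th:normal_circuit} preserves the value and returns its unique normal representative, which by the uniqueness theorem is $\lambda(f(m,n))$. The constants agree by definition, since $0$ and $1$ in $\CC_{norm}$ are interpreted as the normal circuits of value $0$ and $1$.

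I would treat the partial operation $x\cdot2^{-y}$ (and, for a negative exponent, $x\cdot2^y$) with extra care: here the point is that $m\cdot2^{-n}$ is an integer, so that $f(m,n)$ is defined in $\MZ$, if and only if the circuit $\lambda(m)\circ\lambda(n)$ built by Algorithm~\ref{al:division_power2} is proper, which is guaranteed by Proposition~\ref{pr:complexity_divpower2}; when defined the values coincide and $Normal$ returns $\lambda(m\cdot2^{-n})$. The second assertion is then immediate: by the preceding proposition the closed term $t$ and $\CT_{\tau(t)}$ define the same function on $\tilde Z$, so $\tau(t)$ is a constant circuit with $\CE(\tau(t)) = n$; therefore $Norm(\tau(t))$ is a normal circuit of value $n$ and, by uniqueness, equals $\lambda(n)$. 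I expect the only delicate point to be the bookkeeping around partiality and the verification that the operations induced on $\CC_{norm}$ are well defined independently of the choice of equivalent input circuits; but both reduce to value-correctness plus the uniqueness theorem, so I anticipate only careful case analysis rather than a genuine obstacle.
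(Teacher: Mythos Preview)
Your proposal is correct and matches the paper's approach: the paper does not give a proof at all, stating only that the proposition ``follows directly from the definition of $\lambda$ and the results on normal power circuits,'' and your argument is precisely the expected unpacking of that sentence---bijectivity from the uniqueness theorem and Theorem~\ref{th:normal_cp_number}, the homomorphism property from the value-correctness results of Section~\ref{se:operations} together with Theorem~\ref{th:normal_circuit}, and the second item from the preceding proposition on $\tau$.
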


Let $L_0 = \{+,-,x\cdot 2^y, x\cdot2^{-y}\}$ and $\CT_{L_0}$ as above.
The next algorithm solves the term realization problem for $\CC_{norm}$.

\begin{algorithm} \label{al:term_realization}{\em (Term realization for $\CC_{norm}$)}\\
    {\sc Input.}
Let $t(x_1, \ldots,x_k) \in \CT_{L}$ be a term in variables $X_k = \{x_1, \ldots,x_k\}$  and $\eta:X_k \to \CC_{norm}$ an assignment of variables. \\
    {\sc Output.}
A circuit $\CP_t = t(\eta(X_k))$ if it is defined in $\MZ_\CL$. $Failure$ otherwise.\\
    {\sc Computations.}
\begin{enumerate}
    \item[(A)]
For every subterm $u$ of $t$ compute a reduced power circuit $\CP_{u}'$ realizing $u$ as follows:
\begin{enumerate}
    \item
If $u$ is a term $0$ then $\CP_{u}' = C_0$.
    \item
If $u$ is a term $1$ then $\CP_{u}' = C_1$.
    \item
If $u$ is a term $x \in X$ then $\CP_{u}' = \eta(x)$.
    \item
If $u$ is a term $u = f(u_1,u_2)$ where $f$ is an operation from $\CL$ then
apply Algorithm \ref{al:sum_pp} or Algorithm \ref{al:multiplication_power2} to circuits
$\CP_{u_1}'$ and $\CP_{u_2}'$ (we assume they are already constructed).
Reduce and denote the result by $\CP_u'$.
\end{enumerate}
If $\CP_u'$ does not represent an integer then output $Failure$.
    \item[(B)]
Compute the normal power circuit $\CP_t$ equivalent to $\CP_t'$ (use Theorem \ref{th:normal_circuit}).
    \item[(E)]
Output the $\CP_t$.
\end{enumerate}
\end{algorithm}

We summarize the discussion above in the following

\begin{theorem} \label{th:theoremB}
Let $t \in \CT_{\CL}$ and $\eta:X \to \CC_{norm}$ an assignment of variables.
Algorithm \ref{al:term_realization} determines whether  $t(\eta(X))$ is defined in $\MZ_\CL$  or not;
and if  defined  then it produces the normal
circuit $\CP_t$ which represent the number $t(\eta(X))$.
\end{theorem}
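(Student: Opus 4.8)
The statement is a correctness assertion for Algorithm \ref{al:term_realization}, so the natural route is structural induction on the subterms of $t$, leveraging the already–established correctness of each elementary operation algorithm from Section \ref{se:operations} together with the reduction and normalization results of Sections \ref{se:reduction}--\ref{se:normal_form_computing}. Concretely, I would prove the following invariant by induction on the complexity of a subterm $u$ of $t$: when step (A) finishes processing $u$, either $u(\eta(X))$ is undefined in $\MZ_\CL$ and the algorithm has already returned $Failure$, or $u(\eta(X))$ is defined and $\CP_u'$ is a reduced constant power circuit that \emph{properly represents} the integer $u(\eta(X))$.

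For the base cases $u\in\{0,1\}\cup X$ the circuits $C_0$, $C_1$ and $\eta(x)$ are (isomorphic to) constant normal circuits, hence reduced and proper, and represent $0$, $1$ and the integer $\eta(x)$; these values are always defined. For the inductive step write $u=f(u_1,u_2)$ with $f\in\CL$. If either $u_i(\eta(X))$ is undefined then by induction $Failure$ was already returned, so assume both are defined integers and, by induction, that $\CP_{u_1}'$ and $\CP_{u_2}'$ properly represent them. Applying the algorithm for $f$ (Algorithm \ref{al:sum_pp} for $+,-$, Algorithm \ref{al:product_pp} for $\ast$, Algorithm \ref{al:multiplication_power2} for $x\cdot 2^y$, and Algorithm \ref{al:division_power2} for $x\cdot 2^{-y}$) yields a circuit $\CP$ whose value $\CE(\CP)$ equals the real number $f(u_1(\eta(X)),u_2(\eta(X)))$, as guaranteed by Proposition \ref{pr:complexity_add}, Proposition \ref{pr:complexity_divpower2} and the correctness statement accompanying Algorithm \ref{al:product_pp}. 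Reducing via Theorem \ref{th:reduction} produces $\CP_u'=\Reduce(\CP)$, which is reduced and satisfies $\CE(\CP_u')=\CE(\CP)$ by Proposition \ref{pr:reduction_result}.

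The crux is the equivalence \emph{$u(\eta(X))$ is defined in $\MZ_\CL$ if and only if $\CP_u'$ properly represents an integer}. The operations $+,-,\ast$ are total on $\MZ$ and preserve properness, since their constructions only form disjoint unions and marked products of already-proper circuits, so every vertex value remains a non-negative power of two; hence $\CP_u'$ is automatically proper and the check in step (A) passes. The only partiality arises from $x\cdot 2^y$ and $x\cdot 2^{-y}$, where the exponent can push a vertex power below zero. Here I would invoke the divisibility analysis of Lemma \ref{le:binary_divisibility}: a reduced binary sum represents an integer precisely when its least exponent is non-negative, and for a reduced circuit this propagates to the statement that ``$\CE(\CP_u')\in\MZ$ with all intermediate values natural'' is equivalent to definedness of the term in $\tilde Z$, which is exactly the constant-circuit semantics recorded in Lemma \ref{le:substitution}. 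Thus the properness test detects precisely the undefined case, justifying the $Failure$ branch. This equivalence, together with the bookkeeping that reduction never silently converts a genuinely non-integer value into a proper circuit, is the main obstacle I expect.

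Finally, once step (A) has produced a reduced proper circuit $\CP_t'$ representing the integer $t(\eta(X))$, step (B) applies Theorem \ref{th:normal_circuit} to compute the unique (up to isomorphism) equivalent normal circuit $\CP_t$, which by the normal-form uniqueness theorem equals $\lambda(t(\eta(X)))$. Each properness check runs in cubic time by the lemma on checking properness and each reduction is cubic by Proposition \ref{pr:reduction_complexity}, so the procedure terminates with the claimed output.
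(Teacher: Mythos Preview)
Your proposal is correct and is essentially the argument the paper has in mind: the paper gives no explicit proof of this theorem at all, merely prefacing it with ``We summarize the discussion above in the following'' and relying on the correctness of the individual operation algorithms (Section~\ref{se:operations}), the reduction theorem (Theorem~\ref{th:reduction}), the properness check, and the normalization procedure (Theorem~\ref{th:normal_circuit}). Your structural induction on subterms is exactly the way to make that summary rigorous, and you correctly isolate the only nontrivial point---that the properness test after reduction detects precisely when the partial operations $x\cdot 2^{y}$ and $x\cdot 2^{-y}$ fail to be defined---and you cite the right tools (Lemma~\ref{le:substitution}, Lemma~\ref{le:binary_divisibility}) for it. One small remark: the paper's description of step~(A) in Algorithm~\ref{al:term_realization} names only Algorithms~\ref{al:sum_pp} and~\ref{al:multiplication_power2}, which is evidently an oversight since $t\in\CT_\CL$ may involve $\ast$ and $x\cdot 2^{-y}$; you are right to include Algorithms~\ref{al:product_pp} and~\ref{al:division_power2} as well.
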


The next result shows that the Algorithm \ref{al:term_realization} is of polynomial time on terms from $\CT_{L_0}$.
For a term $t(X) \in \CT_{\CL_0}$ and a variable $x\in X$ define $\sigma_x(t)$ to be
the number of times the variables $x$ occurs  in $t$. Similarly,
define $\sigma_0(t)$ and $\sigma_1(t)$ to be the number of occurrences of
the constants $0$ and $1$ in $t$, respectively.

\begin{theorem}[Complexity of term realization]\label{th:term_realization_complexity}
Let $t(X) \in \CT_{L_0}$ and $\eta:X \to \CC_{norm}$ an assignment of variables.
Let $\CP_t$ be the output of Algorithm \ref{al:term_realization}.
Then
    $$|M(\CP_t)| \le \sum_{x\in X} \sigma_x(t) \cdot |M(\eta(x))| + \sigma_1(t).$$
    $$|V(\CP_t)| \le 2(|t|+1)\rb{\sum_{x\in X} \sigma_x(t) \cdot |V(\eta(x))|  + 2\sigma_1(t) + \sigma_0(t)}$$
and Algorithm \ref{al:term_realization} terminates in
    $$O\rb{ |t|^4 \cdot \rb{ \sum_{x\in X} \sigma_x(t) \cdot|V(\eta(x))|+ 2\sigma_1(t) + \sigma_0(t)}^3}$$
steps.
\end{theorem}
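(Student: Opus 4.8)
The plan is to establish all three bounds—on $|M(\CP_t)|$, on $|V(\CP_t)|$, and on the running time—by induction on the structure of the term $t$, tracking how each of the operations from $\CL_0$ inflates these three quantities. The key observation that makes the induction manageable is that Algorithm \ref{al:term_realization} reduces the circuit after \emph{every} binary operation, so at each internal node the intermediate circuit $\CP_u'$ is reduced, and by Theorem \ref{th:reduction} reduction adds at most one vertex while costing $O(|V|^3)$ time. I would set up the induction hypothesis to carry simultaneously all three estimates, since the time bound depends on the vertex bound at each step.

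\textbf{Marked-vertex bound.} First I would prove the inequality for $|M(\CP_t)|$ by induction on the complexity of $t$. The base cases are $t \in \{0,1\}\cup X$: the circuits $C_0, C_1$ contribute at most $1$ to the marked count, while $\eta(x)$ contributes $|M(\eta(x))|$. For the inductive step $t = f(t_1,t_2)$ with $f \in \CL_0$, I would invoke the size estimates from Section \ref{se:operations}: by Proposition \ref{pr:complexity_add} addition and subtraction give $|M| = |M(\CP_{t_1})| + |M(\CP_{t_2})|$, and by Proposition \ref{pr:complexity_divpower2} multiplication and division by a power of two give $|M| \le |M(\CP_{t_1})|$ (the marked set is inherited from the first operand only). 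Crucially, reduction does not increase the number of marked vertices. Summing occurrences across the parse tree yields $|M(\CP_t)| \le \sum_{x\in X} \sigma_x(t)\cdot|M(\eta(x))| + \sigma_1(t)$, where the constant $1$ contributes a marked vertex but $0$ does not.

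\textbf{Vertex bound.} Next I would bound $|V(\CP_t)|$, again by induction, but here the extra vertex introduced by each reduction step must be accounted for. Each binary operation node contributes $|V(\CP_{t_1})| + |V(\CP_{t_2})|$ (plus the $O(|M|)$ overhead from Algorithm \ref{al:marked_origins} hidden inside the size estimates), and the subsequent reduction adds at most $1$ by Theorem \ref{th:reduction}. Since there are at most $|t|$ internal nodes, the total additive contribution from reduction is at most $|t|$; charging the leaves and multiplying through the parse tree gives the factor $2(|t|+1)$ in front of the leaf-weighted sum. I would verify that the coefficients $2\sigma_1(t) + \sigma_0(t)$ correctly count the vertices contributed by $C_0$ and $C_1$ (each leaf circuit has at most $2$ vertices).

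\textbf{Time bound, and the main obstacle.} Finally, the running time: at each of the $O(|t|)$ internal nodes, the dominant cost is the reduction step, which by Proposition \ref{pr:reduction_complexity} is $O(|V|^3)$ in the size of the current circuit. The largest any intermediate circuit can become is bounded by $|V(\CP_t)|$, which I have just shown is $O\!\bigl((|t|+1)\cdot W\bigr)$ where $W = \sum_{x\in X}\sigma_x(t)\cdot|V(\eta(x))| + 2\sigma_1(t)+\sigma_0(t)$. Cubing this and multiplying by the $O(|t|)$ nodes gives $O\!\bigl(|t|\cdot(|t|+1)^3 W^3\bigr) = O\!\bigl(|t|^4 W^3\bigr)$, matching the claim. \textbf{The hard part will be} the bookkeeping that justifies reducing at every intermediate node rather than only once at the end: one must argue that the cumulative vertex growth stays within the claimed bound despite up to $|t|$ separate reductions, and that the worst-case cubic reduction cost is legitimately bounded by the \emph{final} vertex count rather than some larger transient. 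The subtlety is that multiplication by a power of two is the one operation whose division variant ($x\cdot 2^{-y}$) requires the first operand to be reduced (Algorithm \ref{al:division_power2}), so the invariant ``$\CP_u'$ is reduced'' is exactly what guarantees each such step is well-defined and polynomially bounded; I would make this invariant explicit and confirm it is preserved at every node.
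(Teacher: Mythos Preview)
Your proposal is correct and follows essentially the same inductive approach as the paper: bound $|M(\CP_u')|$ and $|V(\CP_u')|$ for every subterm $u$ using the size estimates from Section~\ref{se:operations}, invoke Theorem~\ref{th:reduction} for the $+1$ vertex from each reduction, and sum $|t|$ cubic reduction costs for the time bound. One minor bookkeeping difference: the paper notes that the $+1$ from each reduction is offset by folding the two zero vertices of $\CP_{u_1}'$ and $\CP_{u_2}'$ into one, so the $(|t|+1)$ factor in the vertex bound is driven entirely by the $+|M(\CP_{u_1}')|$ overhead from Algorithm~\ref{al:marked_origins} (bounded via the already-established marked-vertex inequality), rather than by accumulated reduction increments as you suggest---but this does not affect the validity of your outline.
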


\begin{proof}
Following Algorithm \ref{al:term_realization} by induction on complexity of a subterm $u$ we prove that
\begin{equation}\label{eq:realization_mvertex_bound}
    |M(\CP_u')| \le \sum_{x\in X} \sigma_x(u) \cdot |M(\eta(x))| + \sigma_1(u).
\end{equation}
\begin{equation}\label{eq:realization_vertex_bound}
    |V(\CP_u')| \le (|u|+1)\rb{\sum_{x\in X} \sigma_x(u) \cdot |V(\eta(x))|  + 2\sigma_1(u) + \sigma_0(u)}.
\end{equation}
Indeed, the bounds (\ref{eq:realization_mvertex_bound}) and (\ref{eq:realization_vertex_bound})
clearly hold for the elementary terms $0$, $1$, and $x$.
If $u = f(u_1,u_2)$ where $f \in \CL_0$  then
one of the   Algorithms \ref{al:sum_pp} or \ref{al:multiplication_power2} (depending on $f$)
produces a circuit $\CP$ such that $\CE(\CP) = f(\CE(\CP_{u_1}'), \CE(\CP_{u_2}'))$.
For every such $f$ we have
\begin{equation}\label{eq:sum_marked}
    |M(\CP)| \le |M(\CP_{u_1}')|+|M(\CP_{u_2}')|
\end{equation}
\begin{equation}\label{eq:sum_vert}
     |V(\CP)| \le |V(\CP_{u_1}')|+|V(\CP_{u_2}')|+|M(\CP_{u_1}')|.
\end{equation}
Reducing the circuit $\CP$ to $\CP_{u}'$ does not increase the number of marked vertices, hence (\ref{eq:sum_marked}) holds for
$\CP_{u}'$. The inequality (\ref{eq:sum_marked}) immediately implies (\ref{eq:realization_mvertex_bound}).
The reduction process can introduce one auxiliary vertex, but since both $\CP_{u_1}$ and $\CP_{u_2}$
have a zero vertex, the bound (\ref{eq:sum_vert}) also holds for $|V(\CP_{u}')|$.
Now, it follows from (\ref{eq:sum_vert})
that every operation increases the number of marked vertices by at most $|M(\CP_u')|$,
which is bounded in view of (\ref{eq:realization_mvertex_bound}) by the number
$\sum_{x\in X} \sigma_x(u) \cdot |V(\eta(x))|  + 2\sigma_1(u) + \sigma_0(u)$.
Thus the inequality (\ref{eq:realization_vertex_bound}) holds.

Finally, we use Theorem \ref{th:normal_circuit} to compute the normal circuit
for $\CP_t'$. That increases the total number of vertices by up to a factor of $2$
and does not increase the number of marked vertices. Hence the required bounds for $|V(\CP_t)|$
and $|M(\CP_t)|$ follow.

The Algorithm \ref{al:term_realization} performs $|t|$
reductions. By Proposition \ref{pr:reduction_complexity} the complexity of reducing
a circuit $\CP$ requires $O(|V(\CP)|^3)$ steps.
Using the bound (\ref{eq:realization_vertex_bound})
we obtain the required bound on the time complexity of Algorithm \ref{al:term_realization}, which finishes the proof.

\end{proof}

\begin{corollary}\label{co:term_realization_Z}
Let $t(X) \in \CT_{L_0}$ and $\eta:X \to \MZ$ an assignment of variables.
There exists an algorithm which determines if $t(\eta(X))$ is defined in $\MZ_\CL$ (or $\tilde Z$)  or not;
and if defined it then produces the normal
circuit $\CP_t = \lambda(t(\eta(X))) \in\CC_{norm}$. The algorithm has time complexity
    $$O\rb{ |t|^4 \cdot \rb{ \sum_{x\in X} \sigma_x(t)(s_x+2)+2\sigma_1(t)+\sigma_0(t)}^3}$$
where $s_x = \lceil \log_2(|\eta(x)|+1)\rceil$.
\end{corollary}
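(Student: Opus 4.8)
**The plan is to reduce Corollary \ref{co:term_realization_Z} to the preceding theorem by converting the integer assignment $\eta:X\to\MZ$ into an assignment $\eta':X\to\CC_{norm}$ and then bounding the sizes of the normal circuits $\eta'(x)$.**

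First I would invoke Theorem \ref{th:normal_cp_number}: for each variable $x\in X$, the integer $\eta(x)$ is represented by a unique constant normal power circuit $\CP_{\eta(x)}$, computable in time $O(\log_2|\eta(x)|\cdot\log_2\log_2|\eta(x)|)$, and satisfying the size bound $|V(\CP_{\eta(x)})|\le \lceil\log_2|\eta(x)|\rceil+2$. Setting $s_x=\lceil\log_2(|\eta(x)|+1)\rceil$ (the $+1$ absorbs the boundary cases $\eta(x)=0,\pm1$ and the ceiling), this gives $|V(\eta'(x))|\le s_x+2$. I would define $\eta'(x):=\CP_{\eta(x)}$ and observe that, since $\lambda$ is the isomorphism of $\CL$-structures from the Proposition preceding this corollary, the circuit produced by Algorithm \ref{al:term_realization} on input $(t,\eta')$ is precisely $\lambda(t(\eta(X)))$ whenever $t(\eta(X))$ is defined in $\MZ_\CL$, and Algorithm \ref{al:term_realization} outputs $Failure$ exactly when it is undefined. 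This establishes correctness and the ``determines if defined'' claim.

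Next I would apply Theorem \ref{th:term_realization_complexity} directly with the assignment $\eta'$. Its time bound is
$$O\!\rb{|t|^4\cdot\rb{\sum_{x\in X}\sigma_x(t)\cdot|V(\eta'(x))|+2\sigma_1(t)+\sigma_0(t)}^3}.$$
Substituting the size bound $|V(\eta'(x))|\le s_x+2$ into the inner sum yields
$$\sum_{x\in X}\sigma_x(t)\cdot|V(\eta'(x))|+2\sigma_1(t)+\sigma_0(t)\;\le\;\sum_{x\in X}\sigma_x(t)(s_x+2)+2\sigma_1(t)+\sigma_0(t),$$
which is exactly the quantity appearing in the claimed complexity. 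The one-time cost of building the circuits $\eta'(x)$ via Theorem \ref{th:normal_cp_number} is $\sum_{x}O(s_x\log_2 s_x)$, which is dominated by the cubed reduction term above, so it does not affect the stated asymptotics.

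I do not expect any serious obstacle here: the corollary is essentially a specialization of Theorem \ref{th:term_realization_complexity}, obtained by replacing the abstract normal-circuit assignment with the explicit one coming from integer inputs. The only point requiring mild care is the bookkeeping around the logarithmic size bound — matching $|V(\eta'(x))|\le s_x+2$ to the $(s_x+2)$ factor in the final estimate and confirming that the edge-count bounds from Theorem \ref{th:normal_cp_number} do not spoil the vertex-count-driven complexity analysis, since Proposition \ref{pr:reduction_complexity} expresses reduction cost purely in terms of $|V|$ (using $m\le n^2$). With those substitutions the bounds transfer verbatim, completing the proof.
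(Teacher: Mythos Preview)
Your proposal is correct and follows essentially the same route as the paper: construct normal circuits for the integers $\eta(x)$ via Theorem \ref{th:normal_cp_number}, obtaining $|V(\CP_x)|\le s_x+2$, and then apply Theorem \ref{th:term_realization_complexity}. The paper's proof is a terse version of exactly this argument, so there is nothing further to compare.
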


\begin{proof}
The required algorithm first constructs the normal circuits $\CP_x$ representing
$\eta(x)$ for every $x\in X$ and then applies Algorithm \ref{al:term_realization}.
By Theorem \ref{th:normal_cp_number} the time complexity of computing
$\CP_x$ is $O(\log_2(s_x) \log_2\log_2(s_x))$ and $|V(\CP_x)| \le s_x+2$.
Application of Theorem \ref{th:term_realization_complexity} finishes the proof.
\end{proof}

\subsection{Quantifier-free formulas in exponential algebra}

In this section we study the quantifier-free theory of the $\CL_0$-structure
$$\tilde Z   = \langle \mathbb{Z} ; +,-, x\cdot 2^y, x\cdot2^{-y}, \leq, 0, 1\rangle$$
with all the constant from $\MZ$ in the language.
To this end we extend the language $\CL_0$ to
$\CL_0^{const}$ by adding  a constant symbol $n$ for every
$n \in \mathbb{Z}$. The structure $\tilde Z$ naturally
extends to a structure $\tilde{Z}_{const}$ in the language
$\CL_0^{const}$.  Since $\CL_0^{const}$ is an infinite language
the complexity of algorithmic problems in $\tilde{Z}_{const}$
depends on how we present the data, in this case, the constants
$n \in \mathbb{Z}$. We assume here that all the integers
$n \in \mathbb{Z}$ are given in their binary forms.
Of course, since every integer $n$ can be presented as a closed
term in the structure $\langle \mathbb{Z}, +, -, 0,1\rangle$,
every term $t$  in the structure $\tilde{Z}_{const}$ can be
presented by a term $t'$ in the structure $\tilde{Z}$, but in this case the
length of the term $t'$ can grow exponentially in the length of $t$.
In such  event one  would allow too much of leeway to himself
(when working on complexity problems)  by representing integers
in the unary form, and the results would be weaker.

\begin{theorem}\label{th:quantifier-free}
The quantifier-free theory of the structure  $\tilde{Z}_{const}$  is decidable in polynomial time.
\end{theorem}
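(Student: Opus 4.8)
The plan is to reduce deciding a quantifier-free sentence $\Phi$ in $\CL_0^{const}$ to a bounded number of value comparisons between closed terms, each carried out on power circuits rather than on the (possibly tower-sized) integers they denote. Recall that $\Phi$ is a Boolean combination of atomic formulas of the form $t_1 = t_2$ or $t_1 \le t_2$, where $t_1,t_2$ are closed terms built from the constant symbols $n\in\MZ$ (given in binary) and the operations $+,-,x\cdot 2^y, x\cdot 2^{-y}$; since $\Phi$ is quantifier-free with no free variables, every term occurring in it is closed.

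First I would preprocess the constants. For each integer constant $n$ occurring in $\Phi$, of binary length $L_n$, I use Theorem \ref{th:normal_cp_number} to build its normal power circuit $\CP_n$ in time $O(L_n \log_2 L_n)$ and size $O(L_n)$. Treating the distinct constants as a variable assignment $\eta$, each closed term $t_i$ in $\Phi$ becomes a $\CL_0$-term, so Algorithm \ref{al:term_realization} (with the complexity bounds of Theorem \ref{th:term_realization_complexity} and Corollary \ref{co:term_realization_Z}) either reports that $t_i$ is undefined in $\tilde Z$ (the partial operation $x\cdot 2^{-y}$ can fail to produce an integer, and the algorithm returns $Failure$ precisely in that case) or outputs the normal circuit $\CP_{t_i} = \lambda(t_i)$ in polynomial time. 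It is crucial here that multiplication is absent from $\CL_0$: Theorem \ref{th:term_realization_complexity} guarantees that $|V(\CP_{t_i})|$ stays polynomial in $|\Phi|$ and in the bit-lengths of the constants, whereas with $\ast$ the circuits could blow up exponentially.

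Next I would decide each atomic formula. Having $\CP_{t_1}$ and $\CP_{t_2}$, I form the difference circuit $\CP_{t_1}-\CP_{t_2}$ (Section \ref{se:addition}, only linear size growth) and compute $Sign(\CP_{t_1}-\CP_{t_2})$ via Algorithm \ref{al:compare}; by Proposition \ref{pr:complexity_compare_circ} this takes cubic time in the number of vertices, and the resulting sign settles both $t_1=t_2$ and $t_1\le t_2$ without ever materializing the integers, which may be superexponentially large. If one of the two terms is undefined, the atomic formula is declared undefined, and the truth value (or undefinedness) of $\Phi$ is then obtained by evaluating the Boolean combination over the at most $|\Phi|$ atomic formulas. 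The total cost is the sum over atomic formulas of a term realization plus a sign computation, each polynomial, times the linear number of atomic formulas, so the whole procedure runs in polynomial time in $|\Phi|$ together with the total bit-length of the constants.

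The hard part is not any single step — each is supplied by an earlier result — but rather the bookkeeping that makes all bounds polynomial simultaneously: one must check that substituting the circuits $\CP_n$ for the constants keeps the input size to Corollary \ref{co:term_realization_Z} polynomial, so that the $\sigma_x(t)\cdot|V(\eta(x))|$ terms in its bound are controlled by the lengths $L_n$, and that the repeated reductions invoked during term realization and comparison, each cubic by Proposition \ref{pr:reduction_complexity}, accumulate to a polynomial total. I expect no genuine obstruction, only careful accounting, since the absence of multiplication is exactly what prevents the circuit-size explosion exhibited in Section \ref{se:difficulties}.
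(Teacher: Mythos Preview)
Your proposal is correct and follows essentially the same approach as the paper: convert each closed term to a power circuit via Corollary~\ref{co:term_realization_Z}, form the difference, and apply the $Sign$ algorithm (Proposition~\ref{pr:complexity_compare_circ}) to decide each atomic comparison. The paper's own proof is actually terser than yours---it treats only a single atomic formula $t_1 \diamondsuit t_2$ and does not spell out the handling of Boolean combinations or of undefined terms---so your more careful accounting of constants, partial operations, and the accumulation of polynomial bounds is a welcome elaboration rather than a deviation.
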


\begin{proof}
A quantifier free formula in $\tilde{Z}_{const}$ is a formula of the type
$t_1(X) \diamondsuit t_2(X)$ where $t_1,t_2\in \CT_{L_0}$
and $\diamondsuit \in \{\le,=\}$. To determine if it holds in $\tilde{Z}_{const}$ it suffices to compute the normal power circuit representing the therm $t_1-t_2$ and  use
Proposition \ref{pr:complexity_compare_circ}
to compare the value $\CE(\CP)$ with $0$.
Both operations have polynomial time complexity in terms of the size of the formula, hence the result.
\end{proof}

\begin{corollary} \label{co:quantifier-free-no-constants}

The quantifier-free theory of the structure  $\tilde Z$ is decidable in polynomial time.
\end{corollary}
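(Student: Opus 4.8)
The plan is to derive this immediately from Theorem \ref{th:quantifier-free}, observing that $\tilde Z$ is nothing but the reduct of $\tilde{Z}_{const}$ to the sublanguage $\CL_0$. First I would note that the language $\CL_0$ of $\tilde Z$ is contained in the language $\CL_0^{const}$ of $\tilde{Z}_{const}$: the only difference is that $\CL_0^{const}$ carries a constant symbol $n$ for every integer $n$, whereas $\CL_0$ has only the constants $0$ and $1$. Moreover, by construction $\tilde{Z}_{const}$ interprets every symbol of $\CL_0$ exactly as $\tilde Z$ does, so the two structures share the same universe $\MZ$ and agree on all $\CL_0$-symbols.

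Next I would observe that every quantifier-free $\CL_0$-formula $\varphi$ is, in particular, a quantifier-free $\CL_0^{const}$-formula, and that for any assignment of its variables $\varphi$ holds in $\tilde Z$ if and only if it holds in $\tilde{Z}_{const}$. Indeed, the truth value of $\varphi$ depends only on the interpretation of the $\CL_0$-symbols occurring in it, and these interpretations are identical in the two structures. Consequently the quantifier-free theory of $\tilde Z$ coincides with the restriction of the quantifier-free theory of $\tilde{Z}_{const}$ to those formulas that happen to lie in the sublanguage $\CL_0$.

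Finally I would invoke Theorem \ref{th:quantifier-free}: the decision procedure established there for quantifier-free $\CL_0^{const}$-formulas $t_1 \diamondsuit t_2$ (compute the normal power circuit representing $t_1 - t_2$ via Corollary \ref{co:term_realization_Z}, then determine $Sign$ of its value using Proposition \ref{pr:complexity_compare_circ}) runs in polynomial time on every input, and in particular on the syntactically more restricted inputs coming from $\CL_0$. Hence the very same algorithm decides the quantifier-free theory of $\tilde Z$ in polynomial time.

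There is no genuine obstacle in this argument; the corollary is a routine specialization. The only points one must register are that recognizing a formula as belonging to $\CL_0$ is trivial, and that the polynomial-time bound of Theorem \ref{th:quantifier-free} is uniform over its inputs (it depends only on the size of the formula and the binary lengths of the constants appearing in it), so that restricting the constants to $\{0,1\}$ can only decrease the running time. Both observations are immediate, which is why the statement is presented as a corollary rather than a separate theorem.
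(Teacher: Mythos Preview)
Your proposal is correct and matches the paper's intent: the corollary is stated without proof immediately after Theorem~\ref{th:quantifier-free}, precisely because $\tilde Z$ is the $\CL_0$-reduct of $\tilde{Z}_{const}$ and the same algorithm applies verbatim. Your write-up makes explicit exactly the specialization the authors leave implicit.
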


\begin{corollary}

The quantifier-free theory of the structure  $\tilde N = \langle \mathbb{N} ;+,-, x\cdot 2^y, x\cdot2^{-y}, \leq, 0, 1\rangle$  is decidable in polynomial time.
\end{corollary}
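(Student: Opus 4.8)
The plan is to reduce the claim to the decidability already established for $\tilde Z$ in Corollary \ref{co:quantifier-free-no-constants}, using that $\tilde N$ is exactly the restriction of $\tilde Z$ to the non-negative integers. The operations of $\tilde N$ are the restrictions to $\mathbb{N}$ of the operations of $\tilde Z$, so the only new feature is that they are ``more partial'': a term defined in $\tilde Z$ may fail to be defined in $\tilde N$ precisely when some intermediate value becomes negative.

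A quantifier-free formula over $\tilde N$ is a boolean combination of atomic formulas $t_1 \diamondsuit t_2$ with $t_1, t_2 \in \CT_{L_0}$ and $\diamondsuit \in \{\le,=\}$, so it suffices to decide each atomic formula. First I would record the characterization of definedness: for an assignment $\eta$ of the variables into $\mathbb{N}$, a term $t$ is defined in $\tilde N$ if and only if it is defined in $\tilde Z$ and every subterm $u$ of $t$ satisfies $u^\eta \ge 0$. The forward implication is immediate since $\mathbb{N} \subseteq \mathbb{Z}$ and every $\tilde N$-operation agrees with the corresponding $\tilde Z$-operation wherever both are defined; for the converse, if all subterm values are non-negative integers then each operation stays inside $\mathbb{N}$ (subtraction because the difference is $\ge 0$, and $x\cdot 2^{y}$, $x\cdot 2^{-y}$ because the exponent is then non-negative and the relevant divisibility condition coincides with the one already imposed by $\tilde Z$-definedness).

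The algorithm now proceeds exactly as in the proof of Theorem \ref{th:quantifier-free}, enriched by a sign sweep. Using Algorithm \ref{al:term_realization} (Corollary \ref{co:term_realization_Z}) I would compute, in polynomial time in the size of the formula, a normal power circuit $\CP_u$ for every subterm $u$ of $t_1$ and $t_2$, simultaneously testing $\tilde Z$-definedness. To upgrade this to $\tilde N$-definedness, after each circuit $\CP_u$ is produced I would call Algorithm \ref{al:compare} to compute $Sign(\CP_u)$; if any subterm gives $Sign(\CP_u) = -1$, the term is undefined in $\tilde N$. By Proposition \ref{pr:complexity_compare_circ} each sign test costs $O(|V(\CP_u)|^3)$, so these additional checks leave the overall running time polynomial. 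If $t_1$ and $t_2$ are both defined in $\tilde N$, then their values coincide with those computed in $\tilde Z$, and the atomic formula $t_1 \diamondsuit t_2$ is decided by comparing the normal circuit for $t_1 - t_2$ with $0$ via Algorithm \ref{al:compare}, just as in Theorem \ref{th:quantifier-free}.

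The only delicate point, and the main obstacle, is the correct handling of partiality: non-negativity must be verified at every subterm, not merely at the top level, and one must fix the convention for the truth value of an atomic formula containing an undefined term (e.g.\ declaring it false, in the same spirit as the $x\cdot 2^{-y}$ case already treated for $\MZ_\CL$). Because Algorithm \ref{al:term_realization} already builds circuits for all subterms, the non-negativity sweep rides on the existing computation at no asymptotic cost, and the total time stays polynomial in the size of the input formula.
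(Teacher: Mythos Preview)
Your argument is correct. The paper itself gives no proof for this corollary; it is stated immediately after Corollary~\ref{co:quantifier-free-no-constants} as an evident consequence of the $\tilde Z$ case, with the passage from $\mathbb{Z}$ to $\mathbb{N}$ left to the reader. Your write-up supplies exactly the detail the paper omits: the characterization that a closed $\CL_0$-term is defined in $\tilde N$ iff it is defined in $\tilde Z$ and every subterm evaluates to a non-negative integer, together with the observation that Algorithm~\ref{al:term_realization} already produces reduced circuits for all subterms, so a polynomial number of calls to $Sign(\cdot)$ via Algorithm~\ref{al:compare} suffices. This is precisely the intended reduction, just made explicit; there is no genuinely different route here, only more care about partiality than the paper chose to spell out.
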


\section{Some inherent difficulties in  computing  with power circuits}
\label{se:difficulties}

In this section  we  demonstrate that  a product of power circuits may result in a power circuit  whose size  may grow exponentially in the size of the factors. We also show that solving some linear equations in power circuits may take super-exponential time.

\subsection{Division by 3}
\label{:se:modulo-arithmetic}

 For each natural $i$ consider a number
 $$N_i = 2^{2i} + 2^{2(i-1)} + \ldots +2^2 + 2^0 =  \frac{4^{i+1}-1}{3}.$$
 The binary sum above is compact, so by Lemma \ref{le:compact_sum} it is a shortest binary sum decomposition of $N_i$. Hence any other binary decomposition of $N_i$ contains at least $i+1$ terms. This implies that
any power circuit $\CP_i$ representing the number $N_i$ contains at
least $i+1$ vertices. Now, pick
$$i = tower_2(j) =
2^{\left.2^{\ldots^2}\right\}j~ \mbox{times}}.$$
Then  $3N_i =
4^{i+1}-1$ and there exists a circuit, say $\CP_j$, on
$j+1$ vertices representing $4^{i+1}-1$. This follows that the linear equation $3x = \CP_j$ has a solution  $\CP_i$ in the power circuit arithmetic, but any power circuit that gives a solution of this equation has at least $i = tower_2(j)$ vertices.   This proves the following proposition.

\begin{proposition} \label{pr:division-by-3}
The worst case complexity of solving a linear  equation $3x = \CP$ in power circuits ($\CP$ is a constant and $x$ is a variable over  the set of power circuits)  is super-exponential.
\end{proposition}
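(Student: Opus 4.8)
The plan is to exhibit an explicit family of instances $3x=\CP_j$ whose inputs have size linear in $j$, but all of whose solutions are power circuits of size at least $\tower_2(j)$. The argument has two halves: a lower bound on the size of any solution circuit, and an upper bound on the size of the input circuit. The asymmetry between these two — $N_i$ is essentially incompressible while $3N_i=4^{i+1}-1$ is tower-compressible — is the conceptual heart of the statement.

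First I would establish the lower bound. Fix $i\in\MN$ and let $\CP$ be any power circuit properly representing $N_i$. Because $\CP$ is proper, every vertex value $\CE(v)$ is a power of $2$, so $\CE(\CP)=\sum_{v\in M}\nu(v)\CE(v)$ is a binary sum with exactly $|M|$ terms representing $N_i$. The exponents of $N_i=\sum_{k=0}^i 2^{2k}$ differ by $2$, so this is already the compact form of $N_i$, which has $i+1$ terms; by the minimality part of Lemma \ref{le:compact_sum} no binary sum for $N_i$ uses fewer terms. Hence $|M|\ge i+1$, and since $M\subseteq V(\CP)$ we conclude $|V(\CP)|\ge i+1$.

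Next I would construct, for $i=\tower_2(j)$, a proper power circuit $\CP_j$ of size $O(j)$ with $\CE(\CP_j)=3N_i=4^{i+1}-1=2^{2(i+1)}-2^0$. The point is that power circuits realize towers cheaply: take a zero vertex $z$, a vertex $a_0$ with a single edge to $z$ (so $\CE(a_0)=1$), and a chain $t_1\to a_0$, $t_k\to t_{k-1}$, so that $\CE(t_k)=\tower_2(k)$; thus $\CE(t_j)=i$ using $O(j)$ vertices. Writing $2(i+1)=2^{\tower_2(j-1)+1}+2^1$, I would add a vertex $s$ with edges to $t_{j-1}$ and $a_0$ (so $\CE(s)=2^{\tower_2(j-1)+1}=2i$) and a vertex $w$ with edges to $s$ and $t_1$ (so $\CE(w)=2^{2i+2}=4^{i+1}$). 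Marking $w$ with sign $+1$ and $a_0$ with sign $-1$ gives $\CE(\CP_j)=4^{i+1}-1$. All vertex values are positive powers of $2$ or $0$, so $\CP_j$ is proper, and it has $j+4=O(j)$ vertices. Combining the two halves: the equation $3x=\CP_j$ has a solution (for instance the normal circuit representing $N_i$, since $3N_i=\CE(\CP_j)$), yet by the lower bound every solution has at least $i+1=\tower_2(j)+1$ vertices, while the input has size $O(j)$. As $\tower_2(j)$ outgrows every fixed iterated exponential in the input size, the worst-case output size, hence the worst-case time, is super-exponential.

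The step I expect to require the most care is the construction in the third paragraph: one must check that the exponent $2(i+1)$ decomposes as a short binary sum over the values actually present as vertices, and that the resulting circuit is genuinely proper (every $\CE(v)\in\MN$) rather than merely equivalent to $4^{i+1}-1$. The lower bound, by contrast, is essentially immediate from the minimality of compact forms in Lemma \ref{le:compact_sum}.
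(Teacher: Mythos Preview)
Your proposal is correct and follows essentially the same line as the paper: exhibit $N_i=\sum_{k=0}^{i}2^{2k}$, use the minimality of compact binary sums (Lemma~\ref{le:compact_sum}) to force any circuit for $N_i$ to have at least $i+1$ vertices, then take $i=\tower_2(j)$ and observe that $3N_i=4^{i+1}-1$ admits a circuit of size $O(j)$. The paper asserts the existence of a $(j{+}1)$-vertex circuit without details, whereas you build one explicitly on $j+4$ vertices; the discrepancy is immaterial since only $O(j)$ is needed.
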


We conclude this section with an observation that  prime factorization of numbers given by power circuits can be super-exponential.

\subsection{Power circuits and multiplication}
\label{se:with-multiplication}

In this section  we  demonstrate some inherent difficulties when dealing with  products of power circuits (the size of the resulting circuit grows exponentially).

For $n\in\MN$ define a power circuit $\CP_n  =(\CP_n,\mu,M,\nu)$, where
\begin{itemize}
    \item
$\CP_n = (V_n,E_n)$ and $V_n = \{0,\ldots,n\}$
and $E_n = \{(i,i-1) \mid i=1,\ldots,n\} \subset V_n\times V_n$;
    \item
$\mu \equiv 1$;
    \item
$M = \{1,n\}$;
    \item
$\nu(1) = \nu(n) = 1$.
\end{itemize}
Clearly, $\CE(\CP_n) = \tower_2(n-1)+1$ and $|\CP_n| = n+1$.
The product $\CP_4 \cdot \ldots \cdot \CP_{n}$  represents the number
    $$\prod_{i=4}^n (\tower_2(i-1)+1) = \sum_{\sigma\in \{0,1\}^{n-3}} \rb{\prod_{4\le i\le n,~\sigma_{i-3}=1} \tower_2(i-1)}$$
    $$ = \sum_{\sigma\in \{0,1\}^{n-3}} \rb{\prod_{4\le i\le n,~\sigma_{i-3}=1} 2^{\tower_2(i-2)}} = \sum_{\sigma\in \{0,1\}^{n-3}} 2^{s_\sigma}$$
where
    $$s_\sigma = \sum_{4\le i\le n,~\sigma_{i-3}=1} \tower_2(i-2).$$
The binary sum $\sum_{\sigma\in \{0,1\}^{n-3}} 2^{s_\sigma}$ is compact and hence
by Lemma \ref{le:compact_sum} involves the least number $2^{n-3}$ of terms.
Therefore the product $\CP_4 \cdot \ldots \cdot \CP_{n}$
can not be represented by a power circuit of size less than $2^{n-3}$.

\section{Open Problems}
\label{se:open_problems}

In this section we state some interesting algorithmic problems for exponential algebras.

\begin{problem}
Can one develop a robust theory of power circuits when $\mathbb{Z}$ is replaced by  $\mathbb{Q}$? or $\mathbb{R}$?
\end{problem}

Here the main concern is the reduction algorithm.

\begin{problem}
\begin{enumerate}
 \item [1)] Is the quantifier-free theory of the standard high-school arithmetic  $\mathbb{N}_{HS}$  polynomial time decidable (with all constants from $\mathbb{N}$ in the language)?
\item [2)] Is the equational theory of $\mathbb{N}_{HS}$  polynomial time decidable?
\end{enumerate}
\end{problem}

The example in Section \ref{se:with-multiplication} demonstrates that power circuits in the structure $\mathbb{N}_{HS}$ do not allow fast manipulations that involve arbitrary multiplications. Nevertheless, it might be that there are some other means to approach the problem.

\begin{problem}
Is the existential theory of $\tilde N  = \langle \mathbb{N}_{>0}; +,  x\cdot 2^y,  \leq,  1\rangle$ decidable?
Is the Diophantine problem decidable?
\end{problem}

\begin{problem}
What is the time complexity of the  the problem of finding a minimal (in size) constant power circuit  representing a given natural number?
\end{problem}

\begin{problem}
Is $\tilde N$ automatic?
\end{problem}


\begin{thebibliography}{10}

\bibitem{AU:1970}
A.~V. {Aho} and J.~D. {Ullman }, \emph{Transformations on straight line
  programs}. Proceedings of the second annual ACM symposium on Theory of
  computing, Annual ACM Symposium on Theory of Computing STOC '70,
  pp.~136--148. ACM, New York, 1970.

\bibitem{Arora_Barak}
S.~{Arora} and B.~{Barak}, \emph{{Computational Complexity: A Modern
  Approach}}. Cambridge University Press, 2009.

\bibitem{Baumslag:1969}
G.~{Baumslag}, \emph{{A non-cyclic one-relator group all of whose finite factor
  groups are cyclic}}, J. Australian Math. Soc. {10} (1969), pp.~497--498.

\bibitem{Bini_Pan:book:1990}
D.~{Bini} and V.~Y. {Pan}, \emph{Polynomial and matrix computations (vol. 1):
  fundamental algorithms}. Birkhauser Verlag, Basel, Switzerland, Switzerland,
  1994.

\bibitem{BG:2000}
A.~{Blumensath} and E.~{Gradel}, \emph{Automatic structures}. 15th symposium on
  logic in computer science, Logic in Computer Science, 2000, pp.~51–--62. LICS
  2000, 2000.

\bibitem{book93}
R.~{Book} and F.~{Otto}, \emph{String-rewriting systems}, Texts and monographs
  in computer science. Springer, 1993.

\bibitem{Burgisser:1997}
P.~{Burgisser}, M.~{Claussen}, and M.~Amin {Shokrollahi}, \emph{Algebraic
  complexity theory}. Springer, Berlin, 1997.

\bibitem{Dedekind}
R.~{Dedekind}, \emph{{What are numbers and what should they be?}}. Research
  Institute for Mathematics, 1995.

\bibitem{G1}
S.~M. {Gersten}, \emph{{Dehn functions and l1-norms of finite presentations}}.
  Algorithms and Classification in Combinatorial Group Theory, pp.~195--225.
  Springer, Berlin, 1992.

\bibitem{Gurevich:1990}
Y.~{Gurevich}, \emph{{Equational theory of positive numbers with exponentiation
  is not finitely axiomatizable}}, Ann. Pure Appl. Logic {49} (1990),
  pp.~1--30.

\bibitem{Hodgson:1976}
B.~R. {Hodgson}, \emph{Th´eories d´ecidables par automate fini}. Phd thesis,
  University of Montr´eal, 1976.

\bibitem{KS}
I.~{Kapovich} and P.~{Schupp}, \emph{{Genericity, the Arzhantseva-Ol'shanskii
  method and the isomorphism problem for one-relator groups}}, Math. Ann. {331}
  (2005), pp.~1--19.

\bibitem{KN:1995}
B.~{Khoussainov} and A.~{Nerode}, \emph{{Automatic presentations of
  structures}}, Lecture Notes in Computer Science {960} (1995), pp.~367–--392.

\bibitem{Lohrey:2004}
M.~{Lohrey}, \emph{Word problems on compressed words}. Automata, languages and
  programming, Lecture Notes in Computer Science 3142, pp.~906--–918.
  Springer-Verlag, Berlin, 2004.

\bibitem{Macintyre:1981}
A.~{Macintyre}, \emph{{The laws of exponentiation}}. Model theory and
  arithmetic, Lecture Notes in Mathematics 890, pp.~185--197. Springer, 1981.

\bibitem{Macintyre:1991(2)}
\bysame, \emph{{Exponential Algebra}}. Logic and Algebra. Proceedings of the
  international conference dedicated to the memory of Roberto Magari, Lect.
  Notes Pure Appl. Math. 180, pp.~191--210. Springer, 1991.

\bibitem{Macintyre:1991}
\bysame, \emph{{Schanuel's Conjecture and free exponential rings}}, Ann. Pure
  Appl. Logic {51} (1991), pp.~241--246.

\bibitem{Macintyre_Wilkie:1996}
A.~{Macintyre} and A.~{Wilkie}, \emph{{On the decidability of the real
  exponential field}}. Kreiseliana: About and Around Georg Kreisel',
  pp.~441--467. AK Peters, Ltd, 1996.

\bibitem{Malcev}
A.~I. {Malcev}, \emph{{Constructible Algebras}}, Uspekhi Mat. Nauk {16} (1961),
  pp.~3--60.

\bibitem{MUW:2010}
A.~G. {Miasnikov}, A.~{Ushakov}, and {Dong Wook Won}, \emph{{Word problem in
  Baumslag-Gersten group is polynomial time decidable}}, to appear.

\bibitem{Plandowski:1994}
W.~{Plandowski}, \emph{Testing equivalence of morphisms on context-free
  languages}. Algorithms—ESA '94 (Utrecht), Lecture Notes in Computer Science
  855, pp.~460–--470. Springer-Verlag, Berlin, 1994.

\bibitem{Platonov}
A.~N. {Platonov}, \emph{{Isoparametric function of the Baumslag-Gersten
  group}}, (Russian) Vestnik Moskov. Univ. Ser. I Mat. Mekh. (2004),
  pp.~12--17.

\bibitem{Rabin}
M.~{Rabin}, \emph{{Computable algebra, general theory and theory of computable
  fields}}, T. Am. Math. Soc. {94} (1960), pp.~341--360.

\bibitem{Richardson:1969}
D.~{Richardson}, \emph{{A solution of the identity problem for integral
  exponential functions}}, Z. Math. Logik Grundlag. Math. {15} (1969),
  pp.~333--340.

\bibitem{Richardson:1983}
\bysame, \emph{{Roots of real exponential functions}}, Bull. London Math. Soc.
  (2) {28} (1983), pp.~46--56.

\bibitem{Richardson:1992}
\bysame, \emph{{The elementary constant problem}}. International Conference on
  Symbolic and Algebraic Computation 1992, ISSAC, pp.~108–--116. ACM, New York,
  NY, USA, 1992.

\bibitem{Rubin:2008}
S.~{Rubin}, \emph{{Automata presenting structures: A survey of the finite
  string case}}, Bull. Symbolic Logic {14} (2008), pp.~169--209.

\bibitem{Schleimer:2008}
S.~{Schleimer}, \emph{{Polynomial-time word problems}}, Comment. Math. Helv.
  {83} (2008), pp.~741–--765.

\bibitem{Shannon:1949}
C.~E. {Shannon}, \emph{{The synthesis of two-terminal switching circuits}},
  BELL Syst. Tech. J. {28} (1965), pp.~59--98.

\bibitem{Strassen:1969}
V.~{Strassen}, \emph{{Gaussian Elimination is not Optimal}}, Numer. Math. {13}
  (1969), pp.~354--356.

\bibitem{Strassen:1990}
\bysame, \emph{Algebraic complexity theory}. Handbook of Theoretical Computer
  Science, Lecture Notes in Computer Science, Volume A, J.van Leeuwen ed.,
  pp.~633--673. Elsevier, 1990.

\bibitem{Dries1}
L.~{van den Dries}, \emph{{Exponential rings, exponential polynomials and
  exponential functions}}, Pac. J. Math. {113} (1984), pp.~51--66.

\bibitem{W}
C.~M. {Weinbaum}, \emph{{On relators and diagrams for groups with one defining
  relator}}, Illinois J.Math. {16} (1972), pp.~308--322.

\bibitem{Wilkie:2000}
A.~J. {Wilkie}, \emph{{On exponentiation – a solution to Tarski's high school
  algebra problem}}, Quad. Mat. {6} (2000), pp.~107--129.

\bibitem{Zilber:2004}
Z.~{Zilber}, \emph{{Pseudo-exponentiation on algebraically closed fields of
  characteristic zero}}, Ann. Pure Appl. Logic {132} (2004), pp.~67--95.

\end{thebibliography}

\providecommand{\bysame}{\leavevmode\hbox to3em{\hrulefill}\thinspace}
\providecommand{\MR}{\relax\ifhmode\unskip\space\fi MR }
\providecommand{\MRhref}[2]{%
  \href{http://www.ams.org/mathscinet-getitem?mr=#1}{#2}
}
\providecommand{\href}[2]{#2}

\end{document}